\newtheorem{theorem}{Theorem}[section]
\newtheorem{lemma}[theorem]{Lemma}
\newtheorem{proposition}[theorem]{Proposition}
\newtheorem{remark}[theorem]{Remark}
\newtheorem{corollary}[theorem]{Corollary}
\numberwithin{equation}{section}
\begin{document}

\title[Combinatorial bases of
$B_2\sp{(1)}$ modules]{Combinatorial bases of modules for affine Lie
algebra $B_2\sp{(1)}$}
\author{Mirko Primc}
\address{Department of Mathematics, University of Zagreb, Bijeni\v{c}ka 30, Zagreb, Croatia}
\email{primc@math.hr}
\thanks{Partially supported  by the Ministry of Science and Technology of the
Republic of Croatia, grant 037-0372794-2806.}
\subjclass[2000]{Primary 17B67; Secondary 17B69, 05A19} \keywords
{affine Lie algebras, vertex operator algebras, combinatorial bases}

\begin{abstract}
In this paper we construct bases of standard (i.e. integrable
highest weight) modules $L(\Lambda)$ for affine Lie algebra of type
$B_2\sp{(1)}$ consisting of semi-infinite monomials. The main
technical ingredient is a construction of monomial bases for
Feigin-Stoyanovsky type subspaces $W(\Lambda)$ of $L(\Lambda)$ by
using simple currents and intertwining operators in vertex operator
algebra theory. By coincidence $W(k\Lambda_0)$ for $B_2\sp{(1)}$ and
the integrable highest weight module $L(k\Lambda_0)$ for
$A_1\sp{(1)}$ have the same parametrization of combinatorial bases
and the same presentation $\mathcal P/\mathcal I$\,.
\end{abstract}
\maketitle

\section{Introduction}
B.L.~Feigin and A.V.~Stoyanovsky gave in \cite{FS} a construction of
bases of standard (i.e. integrable highest weight) modules
$L(\Lambda)$ for affine Lie algebra $\hat{\mathfrak g}$ of type
$A_1\sp{(1)}$ consisting of semi-infinite monomials. In \cite{P1}
such a construction is extended to all standard modules for affine
Lie algebras of type $A_n\sp{(1)}$. The construction starts by
choosing a particular $\mathbb Z$-grading of the corresponding
simple Lie algebra
\begin{equation}\label{E:general Z-grading}
\mathfrak g={\mathfrak g}_{-1}+{\mathfrak g}_{0}+{\mathfrak g}_{1}
\end{equation}
and a particular group element $e$ which normalizes the subalgebra
$\hat{\mathfrak g}_{1}={\mathfrak g}_{1}\otimes \mathbb C
[t,t\sp{-1}]$. Then
$$
L(\Lambda)=\bigcup_{m=0}\sp{\infty}\,e\sp{-m}U(\hat{\mathfrak
g}_{1})v_\Lambda,\qquad e\sp{-m-1}U(\hat{\mathfrak
g}_{1})v_\Lambda\supset e\sp{-m}U(\hat{\mathfrak g}_{1})v_\Lambda
$$
and semi-infinite monomials appear by ``taking a limit''
$$
\lim_{m\to\infty}e\sp{-m}U(\hat{\mathfrak g}_{1})v_\Lambda.
$$
On the other side, for any classical simple Lie algebra $\mathfrak
g$ and any possible $\mathbb Z$-grading \eqref{E:general Z-grading}
such construction is given in \cite{P2} for the basic
$\hat{\mathfrak g}$-module $L(\Lambda_0)$. In each of these cases a
weight basis of ${\mathfrak g}_{1}$ is interpreted as a perfect
crystal for the quantum group $U_q({\hat{\mathfrak g}}_0)$ and in a
proof of linear independence a crystal base character formula
\cite{KKMMNN} is used, but it was not clear ``why'' this proof works
and how such approach could be extended to higher level standard
modules. A new understanding came from the works of G.~Georgiev
\cite{G} and S.~Capparelli, J.~Lepowsky and A.~Milas \cite{CLM1} and
\cite{CLM2} based on a general idea of J.~Lepowsky to use
intertwining vertex operators to build bases of standard modules and
obtain Rogers-Ramanujan-type recursions for their graded dimensions.
Their way of using intertwining operators inspired a simpler proof
of linear independence for $A_n\sp{(1)}$ in \cite{P3}, and new
constructions for $D_4\sp{(1)}$ by I.~Baranovi\' c in \cite{Ba} and
for $A_n\sp{(1)}$ by G.~Trup\v cevi\' c in \cite{T} for all possible
$\mathbb Z$-gradings \eqref{E:general Z-grading}.

In this paper we use Capparelli-Lepowsky-Milas' approach to extend
the construction in \cite{P2} to all standard modules $L(\Lambda)$
for affine Lie algebra $\hat{\mathfrak g}$ of type $B_2\sp{(1)}$. In
this case we neither have a lattice construction of level $1$
modules nor Dong-Lepowsky's intertwining operators \cite{DL}, but we
manage to construct intertwining operators we need in a proof of
linear independence by using vertex operator algebra theory and
results of C.~Dong, H.~Li and G.~Mason \cite{DLM} on simple
currents. Along the way we also obtain a presentation theorem for
Feigin-Stoyanovsky type subspaces. The underlying structure of
Feigin-Stoyanovsky type subspaces is parallel to the structure of
principal subspaces studied, for example, in \cite{G}, \cite{Cal},
\cite{CalLM} and \cite{AKS}.

Since the list of all possible $\mathbb Z$-gradings \eqref{E:general
Z-grading} coincides with the list of all possible level $1$ simple
currents constructed in \cite{DLM}, the results and methods used in
\cite{T}, \cite{Ba} and this paper give hope that the construction
in \cite{P2} might be extended to all standard modules of all
classical affine Lie algebras by using intertwining operators. In
return, one should expect a rich and interesting combinatorial
structure behind this construction, on one side extending
combinatorics of infinite paths used in \cite{KKMMNN}, and on the
other side extending $(k,n+1)$-admissible configurations ---
combinatorial objects introduced and studied in a series of papers
\cite{FJLMM}--\cite{FJMMT}. Moreover, it might be that the reason
``why'' the proof in \cite{P2} works can be explained by some
connection of tensor multiplication of vertex operator algebra
modules with simple currents (cf. \cite{HL}, \cite{Fu}, \cite{DLM})
on one side and tensor multiplication of affine crystals with
perfect crystals (cf. \cite{KKMMNN}) on the other.

Let $\mathfrak g$ be a simple complex Lie algebra of type $B_2$, let
$\mathfrak h$ be a Cartan subalgebra of $\mathfrak g$ and
\begin{equation*}
\mathfrak g={\mathfrak g}_{-1}+{\mathfrak g}_{0}+{\mathfrak g}_{1}
\end{equation*}
a $\mathbb Z$-grading of $\mathfrak g$ such that ${\mathfrak
h}\subset{\mathfrak g}_{0}$. We fix a basis of ${\mathfrak g}_{1}$
consisting of root vectors denoted as
$$
x_{\underline{2}}\,, x_0\,, x_2\,.
$$
Let $\hat{\mathfrak g} = {\mathfrak g}\otimes \mathbb C
[t,t\sp{-1}]+\mathbb C c+\mathbb C d$ be the associated affine Lie
algebra with the canonical central element $c$. For $x\in\mathfrak
g$ and $n\in\mathbb Z$ we write $x(n)=x\otimes t\sp n$. Then for
integral dominant weight
$$\Lambda=k_0\Lambda_0+k_1\Lambda_1+k_2\Lambda_2$$ of level
$k=k_0+k_1+k_2$ a basis of standard module $L(\Lambda)$ can be
parametrized by semi-infinite monomials
\begin{equation}\label{E:introduction monomials}
\prod_{j\in\mathbb
Z}x_{\underline{2}}(-j)\sp{c_j}x_0(-j)\sp{b_j}x_2(-j)\sp{a_j}\,,\qquad
c_j=b_j=a_j=0\quad\text{for}\quad -j\ll 0,
\end{equation}
with quasi-periodic tail with the period of length $6$
\begin{equation*}
\begin{aligned}
(\dots, \ &c_{-2n},\ b_{-2n},\ a_{-2n},\ c_{-2n-1},\ b_{-2n-1},\
a_{-2n-1},\ \dots)\\=(\dots,& \ k_1,\ k_2,\ k_1,\ k_0,\ k_2,\ k_0,\
\dots)
\end{aligned}
\end{equation*}
for $n\gg 0$, satisfying for all $j\in\mathbb Z$ the so called
difference conditions
\begin{equation}\label{E:introduction difference conditions}
\begin{aligned}
c_{j+1}+b_{j+1}+c_{j}&\leq k,\\
b_{j+1}+a_{j+1}+c_{j}&\leq k,\\
a_{j+1}+c_{j}+b_{j}&\leq k,\\
a_{j+1}+b_{j}+a_{j}&\leq k.
\end{aligned}
\end{equation}
This is the Corollary~\ref{C: bases as semi-infinite monomials} of
Theorem~\ref{T:semi infinite monomials basis}. The main technical
ingredient in the proof is a construction of monomial bases for
Feigin-Stoyanovsky type subspaces defined as
$$
W(\Lambda)=U(\hat{\mathfrak g}_{1})v_\Lambda\subset L(\Lambda),
$$
where $\hat{\mathfrak g}_{1}={\mathfrak g}_{1}\otimes \mathbb C
[t,t\sp{-1}]$ and $v_\Lambda$ is a highest weight vector in
$L(\Lambda)$. By Theorem~\ref{T:the main theorem} the constructed
basis for level $k$ subspace $W(\Lambda)$ consists of finite
monomials of the form \eqref{E:introduction monomials} with
$-j\leq-1$, satisfying difference conditions \eqref{E:introduction
difference conditions} and the so called initial conditions
$$
a_1\leq k_0,\quad b_1+a_{1}\leq k_0+k_2\quad\text{and}\quad
c_1+b_{1}\leq k_0+k_2.
$$
Another consequence of this result is Theorem~\ref{T:presentation of
W(Lambda)} which gives a presentation
$$
W(\Lambda)\cong \mathcal P/\mathcal I_\Lambda,
$$
where $\mathcal P$ is a polynomial algebra $\mathbb
C[x_{\underline{2}}(j),x_0(j),x_2(j)\mid j\leq-1]$ and $\mathcal
I_\Lambda$ is the ideal generated by the set of polynomials
$$\begin{aligned}
&\bigcup_{\substack{n\leq-k-1}}U(\mathfrak
g_0)\cdot\Big(\sum_{\substack{j_1,\dots,j_{k+1}\leq-1\\j_1+\dots+j_{k+1}=n}}x_2(j_1)\dots
x_2(j_{k+1})\Big)\\
&\quad\bigcup \{x_2(-1)\sp{k_0+1}\}\bigcup U(\mathfrak g_0)\cdot
x_2(-1)\sp{k_0+k_2+1}.
\end{aligned}
$$

By coincidence $W(k\Lambda_0)$ for $B_2\sp{(1)}$ and the integrable
highest weight module $L(k\Lambda_0)$ for $A_1\sp{(1)}$ have the
same parametrization of combinatorial bases and the same
presentation $\mathcal P/\mathcal I$\,. Due to this coincidence
E.~Feigin's fermionic formula \cite{F} for $A_1\sp{(1)}$-module
$L(k\Lambda_0)$ is also a character formula of Feigin-Stoyanovsky
type subspace $W(k\Lambda_0)$ for $B_2\sp{(1)}$.

As it was already said, in our construction we use simple currents
and intertwining operators for vertex operator algebra
$L(\Lambda_0)$ associated with the affine Lie algebra
$\hat{\mathfrak g}$ at level $1$. To be more precise, we use results
in \cite{DLM} and \cite{L2} to see the existence of level $1$
``simple current operators''
\begin{equation*}
L(\Lambda_0)\overset{[\omega]}\longrightarrow
L(\Lambda_{1})\overset{[\omega]}\longrightarrow
L(\Lambda_{0}),\qquad L(\Lambda_{2})
\overset{[\omega]}\longrightarrow L(\Lambda_{2})
\end{equation*}
which are linear bijections with the crucial property
\begin{equation}\label{E: crucial property of omega}
x(n)[\omega]=[\omega]x(n+1) \quad \text{for all}\quad
x(n)\in\hat{\mathfrak g}_1.
\end{equation}
From \cite{L1} we have fusion rules
$$
\dim I\binom{L(\Lambda_2)}{L(\Lambda_2)\quad
L(\Lambda_0)}=1\qquad\text{and}\qquad \dim
I\binom{L(\Lambda_1)}{L(\Lambda_2)\quad L(\Lambda_2)}=1
$$
from which we deduce that there are coefficients $[\omega_2]$ and
$[\omega_{\underline{2}}]$ of intertwining operators
$$
\begin{aligned}
&L(\Lambda_0)\overset{[\omega_2]}\longrightarrow
L(\Lambda_{2})\overset{[\omega_{\underline{2}}]}\longrightarrow
L(\Lambda_{1}),\quad
v_{\Lambda_0}\overset{[\omega_2]}\longrightarrow
v_{\Lambda_{2}}\overset{[\omega_{\underline{2}}]}\longrightarrow
v_{\Lambda_{1}},\quad [\omega_{\underline{2}}]w_{\underline{2}}=0,\\
&L(\Lambda_0)\overset{[\omega_{\underline{2}}]}\longrightarrow
L(\Lambda_{2})\overset{[\omega_{2}]}\longrightarrow L(\Lambda_{1}),
\quad v_{\Lambda_0}\overset{[\omega_{\underline{2}}]}\longrightarrow
w_{\underline{2}}\overset{[\omega_2]}\longrightarrow
v_{\Lambda_{1}},\quad [\omega_{2}]v_{\Lambda_{2}}=0
\end{aligned}
$$
which commute with the action of $\hat{\mathfrak g}_1$. We consider
higher level standard modules as submodules of tensor products of
level $1$ modules
$$
L(\Lambda)\subset L(\Lambda_0)\sp{\otimes k_0} \otimes
L(\Lambda_{1})\sp{\otimes k_1}\otimes L(\Lambda_{2})\sp{\otimes
k_2}.
$$

Behind all combinatorial properties of our construction seems to be
relation \eqref{E:v(Lambda to Lambda star)} for $[\omega]v_\Lambda$,
written in terms of tensor products of level $1$ highest weight
vectors as
\begin{equation}\label{E:introduction v(Lambda to Lambda star)}
\begin{aligned}
&[\omega]\left(v_{\Lambda_0}\sp{\otimes k_0}\otimes
v_{\Lambda_1}\sp{\otimes k_1}\otimes v_{\Lambda_{2}}\sp{\otimes
k_2}\right)\\
&=\big([\omega]v_{\Lambda_0}\big)\sp{\otimes k_0}\otimes
\big([\omega]v_{\Lambda_1}\big)\sp{\otimes k_1}\otimes
\big([\omega]v_{\Lambda_2}\big)\sp{\otimes
k_2}\\
&=C\,x_{\underline{2}}(-1)\sp{k_1}x_0(-1)\sp{k_2}x_2(-1)\sp{k_1}\left(v_{\Lambda_1}\sp{\otimes
k_0}\otimes v_{\Lambda_0}\sp{\otimes k_1}\otimes
v_{\Lambda_{2}}\sp{\otimes k_2}\right).
\end{aligned}
\end{equation}
In particular, it is this relation that for level $1$ modules makes
the use of crystal base character formula \cite{KKMMNN} in \cite{P2}
possible.

Very roughly speaking, we prove linear independence by induction on
degree of basis elements in two steps: for monomial vectors
$x(\pi)v_{\Lambda}$, which appear with nontrivial  coefficients
$c_\pi\neq0$ in a linear combination $\sum c_\pi
x(\pi)v_{\Lambda}=0$, we first use intertwining operators
$x(\pi)v_{\Lambda}\to x(\pi)v_{\Lambda'}$ to be able to apply
formula \eqref{E:introduction v(Lambda to Lambda star)} to vectors
$x(\pi)v_{\Lambda'}$ and get a combination of monomial vectors of
the form $\sum c_\pi x(\pi')[\omega]v_{\Lambda''}$. Then, as a
second step, we commute $[\omega]$ to the left and, by using
\eqref{E: crucial property of omega} and induction hypothesis, we
get that $c_\pi$ equals zero --- a contradiction. Of course, the
actual argument is a bit more complicated and, as in \cite{Ba}, we
have to use two basis elements of $4$-dimensional spinor $\mathfrak
g$-module on the top of $L(\Lambda_2)$ and the corresponding
coefficients $[\omega_2]$ and $[\omega_{\underline{2}}]$ of
intertwining operators.

A part of this paper was written while I was a member of the Erwin
Schr\" odinger Institute in Vienna in February of 2009. I would like
to thank J. Schwermer for his hospitality.

\section{Affine Lie algebra of type $B_2\sp{(1)}$}

Let ${\mathfrak g}$ be a complex simple Lie algebra of type $B_2$
and let $\mathfrak h$ be a Cartan subalgebra of ${\mathfrak g}$. Let
$\mathfrak g = \mathfrak h + \sum \mathfrak g_\alpha$ be a root
space decomposition of $\mathfrak g$. The corresponding root system
$R$ may be realized in $\mathbb R\sp2$ with the canonical basis
$\varepsilon_1, \varepsilon_2$ as
$$
R=\{\pm(\varepsilon_1-\varepsilon_2),\pm(\varepsilon_1+\varepsilon_2)\}\cup
\{\pm\varepsilon_1, \pm\varepsilon_2\}.
$$
We fix simple roots $\alpha_1=\varepsilon_1-\varepsilon_2$ and
$\alpha_2=\varepsilon_2$ and denote by $\omega_1=\varepsilon_1$ and
$\omega_2=\tfrac12(\varepsilon_1+\varepsilon_2)$ the corresponding
fundamental weights. Note that $\theta=\varepsilon_1+\varepsilon_2$
is the maximal root. Set
$$
\Gamma=\{\varepsilon_1-\varepsilon_2, \varepsilon_1,
\varepsilon_1+\varepsilon_2\}.
$$
Denote by $\langle\cdot,\cdot\rangle$ the normalized Killing form
such that $\langle\theta,\theta\rangle=2$. We identify $\mathfrak
h\cong \mathfrak h\sp*$ via $\langle\cdot,\cdot\rangle$. We fix
$$
\omega=\omega_1=\varepsilon_1.
$$
Then we have $\alpha(\omega)=\langle\alpha, \omega\rangle$ and
$$
\Gamma=\{\alpha\in R\mid \alpha(\omega)=1\}.
$$
Obviously we have a $\mathbb Z$-grading $\mathfrak g={\mathfrak
g}_{-1}+{\mathfrak g}_{0}+{\mathfrak g}_{1}$ for
$$
{\mathfrak g}_{0}=\mathfrak h+\sum_{\alpha(\omega)=0}\mathfrak
g_\alpha=\mathfrak h+\mathbb Cx_{\varepsilon_2}+\mathbb
Cx_{-\varepsilon_2},\qquad {\mathfrak g}_{\pm 1}=\sum_{\alpha\in\pm
\Gamma}\mathfrak g_\alpha.
$$
Clearly $\mathfrak g_1$ is an irreducible $3$-dimensional $\mathfrak
g_0$-module. We shall briefly write
$$
\underline{2}=\varepsilon_1-\varepsilon_2,
\quad0=\varepsilon_1,\quad 2=\varepsilon_1+\varepsilon_2
$$
so that $\Gamma=\{\underline{2}, 0, 2\}$, a notation as in \cite{P2}
and \cite{Ba}.
 For each root
$\alpha$ fix a root vector $x_\alpha$. For
$\alpha=\underline{2},0,2$ we shall write respectively $x_\alpha$ as
$$
 x_{\underline{2}},x_0, x_2.
$$
These vectors form a basis of $\mathfrak g_0$-module $\mathfrak
g_1$.

Denote by $\hat{\mathfrak g}$ the affine Lie algebra of type
$B_2\sp{(1)}$ associated to $\mathfrak g$,
$$
\hat{\mathfrak g} =\sum_{n\in\mathbb Z}{\mathfrak g}\otimes
t^{n}+\mathbb C c+\mathbb C d
$$
with the canonical central element $c$ and the degree element $d$
such that $[d,x\otimes t^{n}]=n\,x\otimes t^{n}$. Set
$$
\hat{\mathfrak g}_{< 0} =\sum_{n<0}{\mathfrak g}\otimes t^{n},\qquad
\hat{\mathfrak g}_{\leq 0} =\sum_{n\leq 0}{\mathfrak g}\otimes
t^{n}+\mathbb C c+\mathbb C d.
$$
Let $\alpha_0$, $\alpha_1$ and $\alpha_2$ be simple roots of
$\hat{\mathfrak g}$ with the root subspaces ${\mathfrak
g}_{-\theta}\otimes t\sp1$, ${\mathfrak g}_{\alpha_1}\otimes t\sp0$
and ${\mathfrak g}_{\alpha_2}\otimes t\sp0$ respectively, and let
$\Lambda_0$, $\Lambda_1$ and $\Lambda_2$ be the corresponding
fundamental weights of $\hat{\mathfrak g}$ (cf. \cite{K}). We write
$$
x(n)=x\otimes t^{n}
$$
for $x\in{\mathfrak g}$ and $n\in\mathbb Z$ and denote by
$x(z)=\sum_{n\in\mathbb Z} x(n) z^{-n-1}$  a formal Laurent series
in formal variable $z$. For
$$
\hat{\mathfrak g}_{0} =\sum_{n\in\mathbb Z}{\mathfrak g}_{0}\otimes
t^{n}+\mathbb C c+\mathbb C d,\qquad \hat{\mathfrak g}_{\pm1}
=\sum_{n\in\mathbb Z}{\mathfrak g}_{\pm1}\otimes t^{n}
$$
we have $\mathbb Z$-grading $\hat{\mathfrak g}=\hat{\mathfrak
g}_{-1}+\hat{\mathfrak g}_{0}+\hat{\mathfrak g}_{1}$. In particular,
${\hat{{\mathfrak g}}_1}$ is a commutative Lie subalgebra of
$\hat{{\mathfrak g}}$ with a basis
$$
\widehat\Gamma=\{x_{\underline{2}}(n), x_0(n), x_2(n)\mid
n\in\mathbb Z \}=\{x_\gamma(n)\mid \gamma \in \Gamma, n\in\mathbb Z
\}.
$$
On $\widehat\Gamma$ we use linear order
$$
\dots \prec x_2(n-1)\prec  x_{\underline{2}}(n)\prec  x_0(n)\prec
x_2(n)\prec  x_{\underline{2}}(n+1)\prec  \dots \,.
$$

\section{Feigin-Stoyanovsky type subspaces $W(\Lambda)$}

Denote by $L(\Lambda)$ a standard (i.e. integrable highest weight)
$\hat{\mathfrak g}$-module with a dominant integral highest weight
$$
\Lambda=k_0\Lambda_0+k_1\Lambda_1+k_2\Lambda_2,
$$
$k_0, k_1, k_2\in\mathbb Z_{+}$. Throughout the paper we denote by
$k=\Lambda(c)$ the level of $\hat{\mathfrak g}$-module $L(\Lambda)$,
$$
k=k_0 +k_1+ k_2
$$
(cf. \cite{K}). For each fundamental $\hat{\mathfrak g}$-module
$L(\Lambda_i)$ fix a highest weight vector $v_{\Lambda_i}$. By
complete reducibility of tensor products of standard modules, for
level $k>1$ we have
$$
L(\Lambda)\subset L(\Lambda_0)\sp{\otimes k_0} \otimes
L(\Lambda_{1})\sp{\otimes k_1}\otimes L(\Lambda_{2})\sp{\otimes k_2}
$$
with a highest weight vector
$$
v_{\Lambda} = v_{\Lambda_0}\sp{\otimes k_0}\otimes
v_{\Lambda_{1}}\sp{\otimes k_1}\otimes v_{\Lambda_{2}}\sp{\otimes
k_2}.
$$
Later on we shall also realize $L(\Lambda)$ in a symmetric algebra
$$
L(\Lambda)\subset S\sp k(L(\Lambda_0)\oplus L(\Lambda_1)\oplus
L(\Lambda_2)),\qquad v_{\Lambda} = v_{\Lambda_0}\sp{ k_0}
v_{\Lambda_{1}}\sp{ k_1} v_{\Lambda_{2}}\sp{ k_2}.
$$

We set $d\,v_\Lambda=0$. Then $L(\Lambda)$ is $\mathbb Z$-graded by
the degree operator $d$,
$$
L(\Lambda)=L(\Lambda)_0+L(\Lambda)_{-1}+L(\Lambda)_{-2}+\dots\,,
$$
and we say that $\mathfrak g$-module $L(\Lambda)_0=U(\mathfrak
g)v_\Lambda$ is the ``top'' of $L(\Lambda)$. The top of
$L(\Lambda_0)$ is trivial $\mathfrak g$-module $\mathbb
Cv_{\Lambda_0}$, the top of $L(\Lambda_1)$ is $5$-dimensional vector
representation $L(\omega_1)$ and the top of $L(\Lambda_2)$ is
$4$-dimensional spinor $\mathfrak g$-module $L(\omega_2)$.

For each integral dominant $\Lambda$ we have a Feigin-Stoyanovsky
type subspace
$$
W(\Lambda)=U({\hat{{\mathfrak g}}_1})v_\Lambda\subset L(\Lambda).
$$

Denote by $\pi\colon \{x_\gamma(-j)\mid \gamma \in \Gamma, j\geq
1\}\to \mathbb Z_+$ \ a ``colored partition'' for which a finite
number of ``parts'' $x_\gamma(-j)$ (of degree $j$ and color
$\gamma$) appear $\pi(x_\gamma(-j))$ times, and denote by
$$
x(\pi)=\prod x_\gamma(-j)\sp{\pi(x_\gamma(-j))}\in
U({\hat{{\mathfrak g}}_1})=S({\hat{{\mathfrak g}}_1})
$$
the corresponding monomials. We can identify $\pi$ with a sequence
$$
a_1, b_1, c_1, a_2, b_2, c_2,\dots
$$
with finitely many non-zero terms $a_j=\pi(x_{2}(-j))$,
$b_j=\pi(x_{0}(-j))$, $c_j=\pi(x_{\underline{2}}(-j))$ and
$$
x(\pi)=\ \dots\, x_{\underline{2}}(-j)\sp{c_j}x_{0}(-j)\sp{b_j}
x_{2}(-j)\sp{a_j}\dots\,x_{\underline{2}}(-1)\sp{c_1}x_{0}(-1)\sp{b_1}
x_{2}(-1)\sp{a_1}\,.
$$
For a monomial $x(\pi)$ we say that $x(\pi)v_\Lambda\in W(\Lambda)$
is a monomial vector. The main result of this paper is the
following:
\begin{theorem}\label{T:the main theorem} The set of monomial
vectors $x(\pi)v_\Lambda$ satisfying difference conditions
\begin{equation}\label{E:difference conditions}\begin{aligned}
c_{j+1}+b_{j+1}+c_{j}&\leq k,\\
b_{j+1}+a_{j+1}+c_{j}&\leq k,\\
a_{j+1}+c_{j}+b_{j}&\leq k,\\
a_{j+1}+b_{j}+a_{j}&\leq k
\end{aligned}
\end{equation}
for all $j\geq 1$, and initial conditions
\begin{equation}\label{E:initial conditions}
a_1\leq k_0, \quad b_1+a_{1}\leq k_0+k_2, \quad c_1+b_{1}\leq
k_0+k_2,
\end{equation}
 is a basis of level $k$ Feigin-Stoyanovsky type subspace $W(\Lambda)$.
\end{theorem}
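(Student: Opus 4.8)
The plan is to prove the theorem in two halves: spanning and linear independence.

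\emph{Spanning.} Since $\tilde{\mathfrak g}_1$ is abelian, $W(\Lambda)=U(\tilde{\mathfrak g}_1)v_\Lambda$ is cyclic and hence a quotient of the polynomial algebra $\mathcal P=S(\tilde{\mathfrak g}_1)$. I would exhibit enough defining relations to reduce an arbitrary monomial to an admissible one. The basic relation is the integrability relation $x_2(z)^{k+1}=0$, valid because $x_2=x_\theta$ is the highest root vector acting on a level $k$ module; its coefficients give $\sum_{j_1+\dots+j_{k+1}=n}x_2(j_1)\cdots x_2(j_{k+1})=0$, and the action of $U(\mathfrak g_0)$ produces the full family of quadratic-type relations, supplemented by the top-degree relations $x_2(-1)^{k_0+1}=0$ and $U(\mathfrak g_0)\cdot x_2(-1)^{k_0+k_2+1}=0$ coming from the spinor top of the level-one factors $L(\Lambda_2)$. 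The key computation is to determine, with respect to the order $\prec$, the $\prec$-leading monomial of each relation and to verify that these leading monomials are exactly the minimal monomials violating one of the four difference conditions (\ref{E:difference conditions}) or one of the three initial conditions (\ref{E:initial conditions}). Granting this, a standard straightening argument---rewriting a $\prec$-leading violating factor via the matching relation and inducting on $\prec$---shows that the admissible monomial vectors span $W(\Lambda)$.

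\emph{Linear independence.} Here I would follow the Capparelli--Lepowsky--Milas strategy sketched in the introduction, by induction on the degree of the monomial vectors. Assume a nontrivial relation $\sum_\pi c_\pi x(\pi)v_\Lambda=0$ among admissible monomial vectors. The engine of the proof is the identity (\ref{E:introduction v(Lambda to Lambda star)}) for $[\omega]v_\Lambda$, which rewrites the image of the highest weight vector under the simple current as $C\,x_{\underline{2}}(-1)^{k_1}x_0(-1)^{k_2}x_2(-1)^{k_1}$ applied to a highest weight vector $v_{\Lambda^*}$ of the rotated weight. Using the intertwining operators with coefficients $[\omega_2]$ and $[\omega_{\underline{2}}]$ I would first carry each $x(\pi)v_\Lambda$ into a module where this formula applies, converting the relation into $\sum_\pi c_\pi\,x(\pi')[\omega]v_{\Lambda''}=0$. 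Then, invoking the crucial commutation property (\ref{E: crucial property of omega}), $x(n)[\omega]=[\omega]x(n+1)$, I would move $[\omega]$ to the far left; since $[\omega]$ is a bijection this yields a relation in which every mode has been raised by one, hence of strictly smaller degree. The difference conditions are invariant under this index shift, while the initial conditions are calibrated precisely so that the shifted monomials, together with the extra factor $x_{\underline{2}}(-1)^{k_1}x_0(-1)^{k_2}x_2(-1)^{k_1}$, again satisfy the admissibility conditions for $\Lambda''$; the induction hypothesis then forces each $c_\pi=0$, a contradiction.

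The main obstacle, exactly as in the $D_4^{(1)}$ case of \cite{Ba}, is that the top of $L(\Lambda_2)$ is the $4$-dimensional spinor module $L(\omega_2)$ rather than a line, so no single simple current records all the combinatorial data. I expect the delicate point to be organizing the induction so that both top vectors $v_{\Lambda_2}$ and $w_{\underline{2}}$ of the spinor, together with their two intertwining coefficients $[\omega_2]$ and $[\omega_{\underline{2}}]$, are tracked at once; the vanishing relations $[\omega_{\underline{2}}]w_{\underline{2}}=0$ and $[\omega_2]v_{\Lambda_2}=0$ should be used to isolate the coefficient $c_\pi$ of a chosen $\prec$-extremal monomial before performing the shift. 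Checking that the leading-term bookkeeping of the spanning step is compatible with this two-vector intertwining argument---so that precisely the admissible monomials survive---is where the real work lies.
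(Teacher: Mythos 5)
Your outline reproduces the architecture that the introduction already sketches, and both halves point in the right direction: spanning by straightening against leading terms of $U(\mathfrak g_0)\cdot x_\theta(z)^{k+1}$ together with the initial-condition relations, and independence via the formula for $[\omega]v_\Lambda$, the shift $x(n)[\omega]=[\omega]x(n+1)$, and the two spinor coefficients with $[\omega_2]v_{\Lambda_2}=0$ and $[\omega_{\underline 2}]w_{\underline 2}=0$. But there is a genuine gap in the independence half: your induction runs on degree alone, while the $[\omega]$-conversion applies only to monomials whose $(-1)$-part exactly matches the boundary of (\ref{E:initial conditions}), i.e. $a_1=k_0$ together with $b_1+a_1=k_0+k_2$ or $c_1+b_1=k_0+k_2$, since only then does $x(-1)^{\mathcal A_\pi}v_\Lambda$ factor as $[\omega]$ applied to a product of highest weight vectors. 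For monomials strictly inside the admissible region ($a_1<k_0$, or $a_1=k_0$ with both boundary sums strict) the shift produces nothing usable: their images under slot-wise intertwining operators are again admissible monomial vectors of the same degree $-n$, which the degree induction cannot see. The paper closes exactly this hole by a second induction over the partial order on level $k$ weights defined by $k_0'\leq k_0$ and $k_0'+k_2'\leq k_0+k_2$: operators such as $1^{\otimes a_1}\otimes[\omega_2]^{\otimes(k_0-a_1)}\otimes 1^{\otimes(k_2+k_1)}$ annihilate all terms with larger $a_1$ (via $x_2(-1)v_{\Lambda_2}=x_2(-1)v_{\Lambda_1}=0$, $x_2(-1)^2v_{\Lambda_0}=0$), transport the terms with minimal $a_1$ to admissible vectors in $W(\Lambda')$ with $\Lambda'<\Lambda$, and the weight-order hypothesis then kills those coefficients; your plan offers no substitute for this reduction, and without it the argument stalls before the shift is ever performed.

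After that reduction your idea of isolating an extremal coefficient before shifting does match the paper's final step, but two points are left open in your sketch. First, ``applying $[\omega_2]$ or $[\omega_{\underline 2}]$ to some factors'' must be given meaning on $L(\Lambda)$: the paper realizes $L(\Lambda)\subset S^k(V)$ with $V=L(\Lambda_0)\oplus L(\Lambda_1)\oplus L(\Lambda_2)$ and extends $[\omega_{\underline 2}]$ and $[\omega_2]$ to derivations $A$ and $B$ commuting with $\tilde{\mathfrak g}_1$; the elimination then proceeds layer by layer, $A^{k_0},A^{k_0-1},\dots$ over $c_1$ and then powers of $B$ over $b_1$, the key vanishings being $A\bigl(x_{\underline 2}(-1)x_2(-1)v_{\Lambda_0}\bigr)=0$ and $Bv_{\Lambda_2}=0$. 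Second, your assertion that the initial conditions are ``calibrated precisely'' so that the truncated shifted monomials are again admissible is exactly the content of Lemmas~\ref{L:proof of independence step 3A} and \ref{L:proof of independence step 3B}: it is true, but only because the initial conditions for $x(\pi_2^+)v_{\Lambda'}$ turn out to coincide with three instances of the difference conditions (\ref{E:difference conditions}) at $j=1$ for the original $x(\pi)$, which has to be verified rather than assumed. On the spanning side your outline is essentially the paper's, with one imprecision: $U(\mathfrak g_0)\cdot x_2(-1)^{k_0+k_2+1}\subset\ker f$ is not a consequence of the spinor top alone, but of integrability $x_2(z)^{k_0+k_2+1}=0$ on the lower-level submodule $L(k_0\Lambda_0+k_2\Lambda_2)$ combined with $x_\gamma(-1)v_{\Lambda_1}=0$ (Lemma~\ref{L:initial conditions for W(Lambda1)}) and a check that all terms of the resulting infinite sums containing a nonnegative mode annihilate $v_\Lambda$; granted that, the leading-term straightening goes through as you describe.
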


\section{Difference conditions and initial conditions}

By Poincar\' e-Birkhoff-Witt theorem we have a spanning set of
monomial vectors $x(\pi)v_\Lambda$ in Feigin-Stoyanovsky type level
$k$ subspace $W(\Lambda)$. To reduce this spanning set to a basis
described in Theorem~\ref{T:the main theorem} we use vertex operator
algebra relations
$$
 x_\theta(z)\sp{k+1}=\sum_{n\in\mathbb Z}\left(\sum_{j_1+\dots+j_{k+1}=n}
 x_\theta(j_1)\dots x_\theta(j_{k+1})\right)z\sp{-n-k-1}=0\quad
 \text{on}\quad L(\Lambda)
$$
and its consequences $U(\mathfrak g_0)\cdot x_\theta(z)\sp{k+1}=0$,
where $\cdot$ denotes the adjoint action of $\mathfrak g_0$ on
$\hat{\mathfrak g}_1$. That is, the adjoint action of $\mathfrak
g_0$ on coefficients of formal Laurent series $x_\theta(z)\sp{k+1}$
gives relations
$$
\sum c_\mu x(\mu)=0\quad\text{on}\quad W(\Lambda),
$$
where for each relation the sum is over an infinite set of colored
partitions $\mu$. By choosing a proper order on the set of
monomials, for each sum we can determine the smallest term
$x(\rho)$, the so called leading term, which can be replaced on
$W(\Lambda)$ by a sum of higher (bigger) terms. The list of leading
terms is
\begin{equation}\label{E:leading terms}
\begin{aligned}
x_{\underline{2}}(-j-1)\sp{c_{j+1}}x_{0}(-j-1)\sp{b_{j+1}}
x_{\underline{2}}(-j)\sp{c_{j}},
\quad &c_{j+1}+b_{j+1}+c_{j} =k+1,\\
x_{0}(-j-1)\sp{b_{j+1}}x_{2}(-j-1)\sp{a_{j+1}}
x_{\underline{2}}(-j)\sp{c_{j}},
\quad &b_{j+1}+a_{j+1}+c_{j}=k+1,\\
x_{2}(-j-1)\sp{a_{j+1}}x_{\underline{2}}(-j)\sp{c_{j}}
x_{0}(-j)\sp{b_{j}},
\quad &a_{j+1}+c_{j}+b_{j}=k+1,\\
x_{2}(-j-1)\sp{a_{j+1}}x_{0}(-j)\sp{b_{j}} x_{2}(-j)\sp{a_{j}},
\quad &a_{j+1}+b_{j}+a_{j}=k+1
\end{aligned}
\end{equation}
for all $j\geq 1$. So by induction we see that $W(\Lambda)$ is
spanned by monomial vectors $x(\pi)v_\Lambda$ which don't have
factors of the form \eqref{E:leading terms}, i.e., by monomial
vectors which satisfy difference conditions \eqref{E:difference
conditions} (for details of this argument see \cite{LP}, \cite{MP2},
\cite{P2} or \cite{FKLMM}).

\begin{lemma}\label{L:initial conditions for W(Lambda1)}
\
$x_2(-1)v_{\Lambda_1}=x_0(-1)v_{\Lambda_1}=x_{\underline{2}}(-1)v_{\Lambda_1}=0$.
\end{lemma}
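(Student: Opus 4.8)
The plan is to read off the first of the three identities from the integrability of $L(\Lambda_1)$ and then to propagate it to the other two by the $\mathfrak{sl}_2$-action sitting inside $\mathfrak g_0$. First I would note that $x_2=x_\theta$ is the root vector for the maximal root $\theta=\varepsilon_1+\varepsilon_2$, so that, since the affine simple root $\alpha_0$ has root subspace $\mathfrak g_{-\theta}\otimes t^1$, the element $x_2(-1)=x_\theta(-1)$ is precisely the Chevalley lowering generator $f_0$. For the weight $\Lambda_1$ we have $k_0=0$, hence $\langle\Lambda_1,\alpha_0^\vee\rangle=0$, and the integrability relation $f_0^{\langle\Lambda_1,\alpha_0^\vee\rangle+1}v_{\Lambda_1}=0$ collapses to $f_0 v_{\Lambda_1}=0$, that is, $x_2(-1)v_{\Lambda_1}=0$.

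Next I would use the degree-zero lowering operator $f:=x_{-\varepsilon_2}(0)$. Since $\alpha_2=\varepsilon_2$, this is the Chevalley generator $f_2$, and because $\langle\Lambda_1,\alpha_2^\vee\rangle=0$ we also have $f\,v_{\Lambda_1}=0$. On the $3$-dimensional irreducible $\mathfrak g_0$-module $\mathfrak g_1$ the adjoint action of $x_{-\varepsilon_2}$ realizes the chain $x_2\mapsto x_0\mapsto x_{\underline 2}\mapsto 0$ up to nonzero scalars, because $\varepsilon_1=(\varepsilon_1+\varepsilon_2)-\varepsilon_2$ and $\varepsilon_1-\varepsilon_2$ are roots while $\varepsilon_1-2\varepsilon_2$ is not. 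Concretely $[f,x_2(-1)]=c\,x_0(-1)$ and $[f,x_0(-1)]=c'\,x_{\underline 2}(-1)$ with $c,c'\neq 0$, and no central term intervenes since the degrees already sum to $-1$.

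Finally I would propagate: applying $f$ to $x_2(-1)v_{\Lambda_1}=0$ and using $f\,v_{\Lambda_1}=0$ gives $0=[f,x_2(-1)]v_{\Lambda_1}+x_2(-1)f\,v_{\Lambda_1}=c\,x_0(-1)v_{\Lambda_1}$, whence $x_0(-1)v_{\Lambda_1}=0$; applying $f$ once more yields $x_{\underline 2}(-1)v_{\Lambda_1}=0$. The argument is short, so there is no serious analytic obstacle; the only point demanding care is the bookkeeping that identifies $x_2(-1)$ and $x_{-\varepsilon_2}(0)$ with the lowering generators $f_0$ and $f_2$, verifies that the labels $\langle\Lambda_1,\alpha_0^\vee\rangle$ and $\langle\Lambda_1,\alpha_2^\vee\rangle$ both vanish, and confirms that the structure constants $c,c'$ are nonzero.
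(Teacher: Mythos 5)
Your proposal is correct, but it takes a genuinely different route from the paper. You obtain $x_2(-1)v_{\Lambda_1}=0$ from the affine integrability relation $f_0^{\langle\Lambda_1,\alpha_0^\vee\rangle+1}v_{\Lambda_1}=0$ at the affine node (and indeed $f_0$ is $x_\theta(-1)=x_2(-1)$ up to a nonzero scalar, since the root space of $\alpha_0$ is $\mathfrak g_{-\theta}\otimes t^1$, while $k_0=0$ for $\Lambda=\Lambda_1$), and you then transport the vanishing along the irreducible $\mathfrak g_0$-module $\mathfrak g_1$ via $f_2=x_{-\varepsilon_2}(0)$, which annihilates $v_{\Lambda_1}$ because $k_2=0$; your bookkeeping is accurate ($[x_{-\varepsilon_2},x_2]$ and $[x_{-\varepsilon_2},x_0]$ are nonzero multiples of $x_0$ and $x_{\underline 2}$ since $\varepsilon_1$ and $\varepsilon_1-\varepsilon_2$ are roots, and no central term arises since the degrees sum to $-1$, not $0$). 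The paper argues instead by restriction: for each $\alpha\in\Gamma$ it considers $U(\widetilde{\mathfrak sl}_2(\alpha))v_{\Lambda_1}$, a standard $A_1^{(1)}$-module of level $1$ (long $\alpha$) or $2$ (short $\alpha$) whose top is a $2$- resp.\ $3$-dimensional submodule of the $5$-dimensional vector representation, identifies it as $L(\Lambda_1)$ resp.\ $L(2\Lambda_1)$ for $A_1^{(1)}$, and reads off $x_\alpha(-1)v_{\Lambda_1}=0$ in each case. Your argument is more economical --- one standard integrability fact plus two brackets, with no need to classify the restricted modules --- and your propagation step is exactly the trick the paper itself uses in Lemma~\ref{L:initial conditions for W(Lambda0)}, where applying $x_{\pm\varepsilon_2}(0)$ derives relations (2) and (3) from (1). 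What the restriction method buys is uniformity across the section: the same $\widetilde{\mathfrak sl}_2(\alpha)$ analysis also yields, for short roots, the level-two relation $x_0(z)^3=0$ used for part (4) of Lemma~\ref{L:initial conditions for W(Lambda0)} and the analogous statements for $L(\Lambda_0)$ and $L(\Lambda_2)$ in Lemma~\ref{L:initial conditions for W(Lambda2)}, which a node-by-node integrability computation at the highest weight vector does not by itself provide.
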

\begin{proof}
For $\alpha\in R$ denote by ${\mathfrak sl}_2(\alpha)\subset
\mathfrak g$ a Lie subalgebra generated with $x_\alpha$ and
$x_{-\alpha}$ and by
\begin{equation}\label{E:definition of sl2(alpha)}
\widehat{\mathfrak sl}_2(\alpha)=\sum_{n\in\mathbb Z}{\mathfrak
sl}_2(\alpha)\otimes t^{n}+\mathbb C c+\mathbb C d\subset
\hat{\mathfrak g}
\end{equation}
denote the corresponding affine Lie algebra of type $A_1\sp{(1)}$.
Note that for a level one $\hat{\mathfrak g}$-module $V$ the
restriction to $\widehat{\mathfrak sl}_2(\alpha)$ is a level one
representation if $\alpha $ is a long root, and it is a level two
representation if $\alpha $ is a short root. Also note that
$U(\widehat{\mathfrak sl}_2(\alpha))v_{\Lambda_1}$ is a standard
$A_1\sp{(1)}$-module and that its ${\mathfrak
sl}_2(\alpha)$-submodule on the top is a submodule of
$5$-dimensional vector representation for $B_2$.

In the case $\alpha=\varepsilon_1-\varepsilon_2={\underline{2}}$ we
have level one representation on $U(\widehat{\mathfrak
sl}_2(\alpha))v_{\Lambda_1}$ with $2$-dimensional ${\mathfrak
sl}_2(\alpha)$-module on the top, so it must be the standard
$A_1\sp{(1)}$-module $L(\Lambda_1)$. Hence
$x_{\varepsilon_1-\varepsilon_2}(-1)v_{\Lambda_1}=0$. Similarly
$x_\alpha(-1)v_{\Lambda_1}=0$ for
$\alpha=\varepsilon_1+\varepsilon_2$. On the other hand in the case
$\alpha=\varepsilon_1$ we have level two representation on
$U(\widehat{\mathfrak sl}_2(\alpha))v_{\Lambda_1}$ with
$3$-dimensional ${\mathfrak sl}_2(\alpha)$-module on the top, so it
must be the standard $A_1\sp{(1)}$-module $L(2\Lambda_1)$. Hence
again $x_\alpha(-1)v_{\Lambda_1}=0$.
\end{proof}

\begin{lemma}\label{L:initial conditions for W(Lambda0)} We have
\begin{enumerate}
\item $x_2(-1)x_{{2}}(-1)v_{\Lambda_0}=x_{\underline{2}}(-1)x_{\underline{2}}(-1)v_{\Lambda_0}=0,$
\item $x_0(-1)x_{{2}}(-1)v_{\Lambda_0}=x_{\underline{2}}(-1)x_0(-1)v_{\Lambda_0}=0,$
\item $x_{\underline{2}}(-1)x_2(-1)v_{\Lambda_0}=Cx_0(-1)x_0(-1)v_{\Lambda_0}$
for some $C\neq0$,
\item $x_0(-1)\sp3v_{\Lambda_0}=0.$
\end{enumerate}
\end{lemma}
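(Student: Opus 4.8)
The plan is to work inside the level-one module $L(\Lambda_0)$, whose top is the trivial one-dimensional $\mathfrak g$-module $\mathbb C v_{\Lambda_0}$. The statement concerns vectors of degree $-2$ obtained by applying two elements of $\tilde{\mathfrak g}_1$ of the form $x_\gamma(-1)$. The key structural fact is that, just as in Lemma~\ref{L:initial conditions for W(Lambda1)}, I can restrict to various $A_1\sp{(1)}$-subalgebras $\widetilde{\mathfrak{sl}}_2(\alpha)$ and use that $U(\widetilde{\mathfrak{sl}}_2(\alpha))v_{\Lambda_0}$ is a standard module whose level is $1$ for a long root $\alpha$ and $2$ for a short root $\alpha$. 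Since the top $\mathbb C v_{\Lambda_0}$ is one-dimensional, the $\mathfrak{sl}_2(\alpha)$-submodule on the top is trivial, so $U(\widetilde{\mathfrak{sl}}_2(\alpha))v_{\Lambda_0}$ is the basic module $L(\Lambda_0)$ for long $\alpha$ and $L(2\Lambda_0)$ for short $\alpha$.

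First I would prove (1) and (4). The colors $2=\varepsilon_1+\varepsilon_2$ and $\underline 2=\varepsilon_1-\varepsilon_2$ are long roots, while $0=\varepsilon_1$ is short. For a long root $\alpha$, the basic $A_1\sp{(1)}$-module $L(\Lambda_0)$ has a vacuum vector annihilated by $x_\alpha(-1)\sp2$ (this is the standard level-one relation $x_\alpha(-1)\sp2 v=0$, equivalently the vanishing of $x_\alpha(z)\sp2$ on a level-one module), giving the two vanishings in (1). For the short root $\alpha=\varepsilon_1$ the module is $L(2\Lambda_0)$ at level $2$, where the corresponding relation is $x_\alpha(-1)\sp3 v=0$, which is exactly (4).

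Next I would handle (2) and (3), which mix colors and therefore are not immediately visible inside a single $\mathfrak{sl}_2(\alpha)$. The plan is to work in the weight space of $L(\Lambda_0)_{-2}$ of weight $-2\theta=-2(\varepsilon_1+\varepsilon_2)$ for the four products in (2) and (3), and to exploit that this weight space is governed by the $\mathfrak g_0$-module structure together with the level-one relations. Concretely, I would apply root vectors of $\mathfrak g_0=\mathfrak h+\mathbb C x_{\varepsilon_2}+\mathbb C x_{-\varepsilon_2}$ to the already-established relation $x_2(-1)\sp2 v_{\Lambda_0}=0$. Acting by the adjoint operator $x_{-\varepsilon_2}$ (which sends the color $2$ to $0$ and $0$ to $\underline 2$ up to nonzero scalars) lowers the products $x_2(-1)\sp2$ through $x_0(-1)x_2(-1)$ and $x_{\underline 2}(-1)x_2(-1)+x_0(-1)\sp2$ type combinations; the precise $\mathfrak{sl}_2$ bracket relations among $x_{\underline 2},x_0,x_2$ under $x_{\pm\varepsilon_2}$ then convert the single relation $x_2(-1)\sp2 v_{\Lambda_0}=0$ into the family in (1)--(3). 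This is the heart of the matter: (3) in particular asserts a nonzero proportionality $x_{\underline 2}(-1)x_2(-1)v_{\Lambda_0}=C\,x_0(-1)\sp2 v_{\Lambda_0}$ with $C\neq 0$, rather than a vanishing, and it should emerge from applying $x_{-\varepsilon_2}$ twice to $x_2(-1)\sp2 v_{\Lambda_0}=0$ and carefully tracking the nonzero structure constants.

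The main obstacle I anticipate is precisely the bookkeeping of structure constants in step three: I must fix the normalization of the root vectors $x_{\underline 2},x_0,x_2$ and compute the brackets $[x_{-\varepsilon_2},x_\gamma]$ inside the $3$-dimensional $\mathfrak g_0$-module $\mathfrak g_1$ accurately enough to be sure that the coefficient $C$ in (3) is genuinely nonzero and that no unwanted cross-terms survive. A clean way to organize this is to observe that all four products in (1)--(3) span (part of) the weight-$(-2\theta)$ space of the $\mathfrak g_0$-submodule $U(\mathfrak g_0)\cdot x_2(-1)\sp2$, and that the full level-one relation $x_\theta(z)\sp2=0$ together with its $U(\mathfrak g_0)$-translates (exactly the mechanism used for the leading terms in~\eqref{E:leading terms}) forces these four quadratic expressions to satisfy a one-dimensional space of relations, from which (1)--(3) follow; (4) is the independent short-root relation. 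The delicate point remains verifying $C\neq 0$, which I would pin down either by the explicit spinor/vector realization or by a dimension count of the weight space.
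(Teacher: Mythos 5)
Your proposal is correct and takes essentially the same route as the paper: (1) and (4) come from the level-one relations $x_\gamma(z)\sp2=0$ for the long roots and $x_0(z)\sp3=0$ for the short root (level two upon restriction to $\widetilde{\mathfrak sl}_2(\varepsilon_1)$), and (2)--(3) are obtained exactly as in the paper by applying $x_{\pm\varepsilon_2}(0)$, which annihilate $v_{\Lambda_0}$, to the relations in (1). Two small corrections: the four quadratic products lie in the weight spaces $2\varepsilon_1+\varepsilon_2$, $2\varepsilon_1-\varepsilon_2$ and $2\varepsilon_1$, not in a single space of weight $-2\theta$; and no spinor realization or dimension count is needed for $C\neq0$, since $C$ is a ratio of the structure constants in $[x_{-\varepsilon_2},x_2]=b\,x_0$ and $[x_{-\varepsilon_2},x_0]=a\,x_{\underline{2}}$, both nonzero because $\varepsilon_1$ and $\varepsilon_1-\varepsilon_2$ are roots of $\mathfrak g$.
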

\begin{proof}
Note that $x_\gamma(j)v_{\Lambda_0}=0$ for all $j\geq 0$, so the
relation $x_\gamma(z)\sp2=0$ on $L(\Lambda_0)$ for a long root
$\gamma$ implies
$$
x_\gamma(-1)\sp2v_{\Lambda_0}=\left(x_\gamma(-1)x_\gamma(-1)+2x_\gamma(-2)x_\gamma(0)+
2x_\gamma(-3)x_\gamma(1)+\dots\right)v_{\Lambda_0}=0
$$
and (1) follows. Since
$x_{\varepsilon_2}(0)v_{\Lambda_0}=x_{-\varepsilon_2}(0)v_{\Lambda_0}=0$,
the action of $x_{\varepsilon_2}(0)$ or $x_{-\varepsilon_2}(0)$ on
(1) gives relations (2) and (3). Since for level one $\hat{\mathfrak
g}$-module $V$ the restriction to $\widehat{\mathfrak sl}_2(\alpha)$
is level two representation if $\alpha $ is a short root, on
$L(\Lambda_0)$ we have a relation $x_0(z)\sp3=0$ and (4) follows.
\end{proof}
We fix vectors $w_2=v_{\Lambda_2}$ and $w_{\underline{2}}$ with
weights
$$
\omega_2=\tfrac12(\varepsilon_1+\varepsilon_2)\quad\text{and}\quad
\omega_{\underline{2}}=\tfrac12(\varepsilon_1-\varepsilon_2)
$$
in the $4$-dimensional spinor $\mathfrak g$-module on the top of
$L(\Lambda_{2})$. By using arguments as above we obtain the
following:
\begin{lemma}\label{L:initial conditions for W(Lambda2)}
We have
\begin{enumerate}
\item $x_{2}(-1)v_{\Lambda_2}=0$ \ and \
$x_{\underline{2}}(-1)w_{\underline{2}}=0$,
\item $x_{\underline{2}}(-1)x_{\underline{2}}(-1)v_{\Lambda_2}=
x_{\underline{2}}(-1)x_{0}(-1)v_{\Lambda_2}=x_{0}(-1)x_{0}(-1)v_{\Lambda_2}=0$.
\end{enumerate}
\end{lemma}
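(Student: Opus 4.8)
The plan is to combine the two techniques already used in Lemmas~\ref{L:initial conditions for W(Lambda1)} and~\ref{L:initial conditions for W(Lambda0)}: restriction to the subalgebras $\widetilde{\mathfrak sl}_2(\alpha)$ together with the identification of the resulting standard $A_1\sp{(1)}$-submodules, and the adjoint action of $\mathfrak g_0$ on already established relations. I would prove part (1) first, because it is needed inside the proof of part (2).

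For part (1), consider $x_2(-1)v_{\Lambda_2}$. Since $\theta=\varepsilon_1+\varepsilon_2=2$ is a long root, the restriction of $L(\Lambda_2)$ to $\widetilde{\mathfrak sl}_2(\theta)$ has level one, and $v_{\Lambda_2}$ is a highest weight vector for it: it is killed by $x_\theta(0)$ (its $\mathfrak g$-weight $\omega_2+\theta$ is not a weight of the spinor module) and by $x_{-\theta}(1)$ (by the degree grading). The $\mathfrak{sl}_2(\theta)$-module on the top spanned by $v_{\Lambda_2}$ and $x_{-\theta}v_{\Lambda_2}$ is two-dimensional, so $U(\widetilde{\mathfrak sl}_2(\theta))v_{\Lambda_2}$ must be the standard $A_1\sp{(1)}$-module $L(\Lambda_1)$, whence $x_2(-1)v_{\Lambda_2}=0$ exactly as $x_\alpha(-1)v_{\Lambda_1}=0$ in Lemma~\ref{L:initial conditions for W(Lambda1)}. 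The same argument applies verbatim to the long root $\underline 2=\alpha_1$ and to $w_{\underline 2}$ in place of $v_{\Lambda_2}$: one checks $x_{\underline 2}(0)w_{\underline 2}=0$ (the weight $\omega_{\underline 2}+\underline 2$ is not a spinor weight) and $x_{-\underline 2}(1)w_{\underline 2}=0$ (since $w_{\underline 2}$ lies in the top), and the two-dimensional top again forces the submodule to be $L(\Lambda_1)$, giving $x_{\underline 2}(-1)w_{\underline 2}=0$.

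For part (2), I would start from the long root relation $x_{\underline 2}(z)\sp2=0$ on $L(\Lambda_2)$. Extracting the appropriate coefficient and using $x_{\underline 2}(j)v_{\Lambda_2}=0$ for $j\geq 0$ — the term $j=0$ dies because $x_{\underline 2}v_{\Lambda_2}$ has weight $\tfrac32\varepsilon_1-\tfrac12\varepsilon_2$, not a spinor weight — yields $x_{\underline 2}(-1)\sp2 v_{\Lambda_2}=0$. I would then generate the remaining two relations by applying the adjoint action of $x_{\varepsilon_2}(0)\in\mathfrak g_0$, using that $\mathfrak g_1$ is the three-dimensional irreducible $\mathfrak{sl}_2(\varepsilon_2)$-module with weight vectors $x_{\underline 2},x_0,x_2$, so that $[x_{\varepsilon_2},x_{\underline 2}]=\mu x_0$ and $[x_{\varepsilon_2},x_0]=\nu x_2$ with $\mu,\nu\neq 0$. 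Since $x_{\varepsilon_2}(0)v_{\Lambda_2}=0$, acting with $x_{\varepsilon_2}(0)$ on $x_{\underline 2}(-1)\sp2 v_{\Lambda_2}=0$ gives $2\mu\,x_{\underline 2}(-1)x_0(-1)v_{\Lambda_2}=0$, the second relation; acting once more produces $\mu\,x_0(-1)\sp2 v_{\Lambda_2}+\nu\,x_{\underline 2}(-1)x_2(-1)v_{\Lambda_2}=0$. Here part (1) is decisive: $x_2(-1)v_{\Lambda_2}=0$ forces the cross term to vanish, leaving $x_0(-1)\sp2 v_{\Lambda_2}=0$. Alternatively, this last relation follows directly from the level-two restriction to $\widetilde{\mathfrak sl}_2(\varepsilon_1)$, whose submodule is the standard $A_1\sp{(1)}$-module $L(\Lambda_0+\Lambda_1)$.

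The $A_1\sp{(1)}$-submodule identifications are routine given the preceding lemmas. The one genuinely new feature, compared with the trivial top of $L(\Lambda_0)$ in Lemma~\ref{L:initial conditions for W(Lambda0)}, is that the spinor top is nontrivial, so the second application of $x_{\varepsilon_2}(0)$ does not close up on its own: it introduces the degree $-2$ cross term $x_{\underline 2}(-1)x_2(-1)v_{\Lambda_2}$. Disposing of this term is the main obstacle, and it is exactly why part (1) must be proved first. The only remaining points needing care are the verification that $w_{\underline 2}$ is a genuine $\widetilde{\mathfrak sl}_2(\underline 2)$-highest weight vector and that $\mu,\nu\neq 0$, both settled by the weight bookkeeping above.
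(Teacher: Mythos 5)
Your proposal is correct and takes essentially the same approach as the paper: the paper omits the proof of this lemma with the remark ``By using arguments as above,'' which refers exactly to what you carry out --- the restriction to $\widetilde{\mathfrak sl}_2(\alpha)$ and identification of the resulting level-one standard $A_1\sp{(1)}$-module $L(\Lambda_1)$ for part (1), and the long-root relation $x_{\underline 2}(z)\sp2=0$ combined with the adjoint action of $x_{\varepsilon_2}(0)$ for part (2). Your point that part (1) must come first so that the cross term $x_{\underline 2}(-1)x_2(-1)v_{\Lambda_2}$ vanishes (unlike the analogous nonvanishing term in Lemma~\ref{L:initial conditions for W(Lambda0)}(3)) is precisely the detail that makes the argument close.
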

\begin{lemma}\label{L:spanning for W(Lambda)}
The set of monomial vectors $x(\pi)v_\Lambda$ satisfying difference
conditions \eqref{E:difference conditions} and initial conditions
\eqref{E:initial conditions} span $W(\Lambda)$.
\end{lemma}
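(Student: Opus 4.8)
The plan is to begin from the spanning set of monomial vectors satisfying the difference conditions (\ref{E:difference conditions}), already produced by the leading-term argument above, and to thin it out further so that the initial conditions (\ref{E:initial conditions}) also hold. The structural fact I would exploit is that $\tilde{\mathfrak g}_1$ is commutative, so all factors of $x(\pi)$ commute and I may push the degree-one part $x_{\underline{2}}(-1)^{c_1}x_0(-1)^{b_1}x_2(-1)^{a_1}$ to the right, so that it acts first on $v_\Lambda$ while the factors of degree $\ge 2$ sit as spectators. Writing $v_\Lambda=v_{\Lambda_0}^{\otimes k_0}\otimes v_{\Lambda_1}^{\otimes k_1}\otimes v_{\Lambda_2}^{\otimes k_2}$, each $x_\gamma(-1)$ acts as a derivation distributing over the tensor factors, so the effect of the degree-one part is controlled entirely by the per-factor relations of Lemmas \ref{L:initial conditions for W(Lambda1)}, \ref{L:initial conditions for W(Lambda0)} and \ref{L:initial conditions for W(Lambda2)}.

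The condition $a_1\le k_0$ I would settle first and directly. Since $x_2(-1)$ annihilates $v_{\Lambda_1}$ and $v_{\Lambda_2}$, while $x_2(-1)^2$ annihilates $v_{\Lambda_0}$, distributing $a_1>k_0$ copies of $x_2(-1)$ over the $k$ tensor factors forces either a forbidden factor to be hit or some $v_{\Lambda_0}$ to be hit twice; hence $x_2(-1)^{a_1}v_\Lambda=0$. As this factor acts first, every monomial vector with $a_1>k_0$ is already the zero vector and may simply be dropped.

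For the other two conditions I would, for each forbidden boundary pattern, exhibit a relation on $W(\Lambda)$ whose smallest (leading) monomial is that pattern and all of whose remaining monomials are strictly larger in the order on $\widetilde\Gamma$ fixed above; the engine is the trade relation $x_{\underline{2}}(-1)x_2(-1)v_{\Lambda_0}=Cx_0(-1)^2v_{\Lambda_0}$ of Lemma \ref{L:initial conditions for W(Lambda0)}(3). A monomial violating $b_1+a_1\le k_0+k_2$ can be nonzero only if some $v_{\Lambda_0}$ factors absorb $x_0(-1)^2$, because the vanishings $x_0(-1)x_2(-1)v_{\Lambda_0}=0$, $x_2(-1)v_{\Lambda_2}=0$ and $x_0(-1)^2v_{\Lambda_2}=0$ exclude every other way of raising the count; on those factors the trade replaces $x_0(-1)^2$ by $x_{\underline{2}}(-1)x_2(-1)$, and since $x_2(-1)\succ x_0(-1)$ this strictly raises the monomial. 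The condition $c_1+b_1\le k_0+k_2$ is handled identically, now invoking $x_{\underline{2}}(-1)x_0(-1)v_{\Lambda_0}=0$ together with Lemma \ref{L:initial conditions for W(Lambda2)}(2), so that again only $x_0(-1)^2$ on $v_{\Lambda_0}$ factors can make a violator nonzero and the same trade introduces $x_2(-1)$ and thereby raises the monomial. Multiplying these relations back by the spectator factors of degree $\ge 2$ yields the required relations among full monomial vectors.

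Finally I would assemble the reduction: with the monomial order fixed, each violator of (\ref{E:initial conditions}) is rewritten as a combination of strictly larger monomial vectors, and re-applying the difference-condition reduction of (\ref{E:leading terms}) where necessary only produces monomials that are larger still; since in each fixed degree and weight the monomials form a finite set, the process terminates at vectors satisfying both (\ref{E:difference conditions}) and (\ref{E:initial conditions}). I expect the real difficulty to be concentrated in the third paragraph, namely carrying out the Leibniz expansion of the degree-one part over the $k_0+k_1+k_2$ tensor factors and verifying, pattern by pattern, that after the trade the \emph{new} leading monomial is genuinely strictly larger and that the rewriting is compatible with the difference-condition reduction at the junction between degree one and degree two.
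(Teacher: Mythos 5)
Your reduction for the first condition $a_1\leq k_0$ is correct and is verbatim the paper's argument: since $x_2(-1)$ kills $v_{\Lambda_1}$ and $v_{\Lambda_2}$ and $x_2(-1)^2$ kills $v_{\Lambda_0}$, any violator with $a_1>k_0$ is the zero vector. The genuine gap is in your treatment of the two remaining conditions. The trade $x_{\underline{2}}(-1)x_2(-1)v_{\Lambda_0}=C\,x_0(-1)^2v_{\Lambda_0}$ of Lemma~\ref{L:initial conditions for W(Lambda0)}(3) is an identity between \emph{vectors in a single tensor factor}, not an identity of operators: for instance $x_0(-1)^2\big(v_{\Lambda_2}\otimes v_{\Lambda_2}\big)=2\,x_0(-1)v_{\Lambda_2}\otimes x_0(-1)v_{\Lambda_2}\neq 0$ by Lemma~\ref{L:initial conditions for W(Lambda2)}, while $x_{\underline{2}}(-1)x_2(-1)\big(v_{\Lambda_2}\otimes v_{\Lambda_2}\big)=0$ because $x_2(-1)v_{\Lambda_2}=0$ and the factors commute. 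Consequently, performing the trade inside individual terms of the Leibniz expansion of $x(\pi)v_\Lambda$ produces a vector of the tensor product that is \emph{not} visibly of the form $\sum c_{\pi'}x(\pi')v_{\Lambda}$ with $x(\pi')$ monomials: you never actually exhibit the promised ``relation on $W(\Lambda)$ whose leading monomial is the forbidden pattern.'' What your (correct) counting argument shows is only that every nonzero Leibniz term of a violator contains a factor $x_0(-1)^2v_{\Lambda_0}$; converting this into an identity among monomial vectors requires comparing complete multinomial expansions across several candidate monomials and solving a linear system. Already for $x_0(-1)^4\big(v_{\Lambda_0}^{\,2}v_{\Lambda_2}\big)$ with $(k_0,k_1,k_2)=(2,0,1)$ the traded expression involves the expansions of at least the monomials with $(c_1,b_1,a_1)=(1,2,1),(2,1,1),(2,0,2)$, and nothing in your proposal guarantees the system is solvable. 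You flag this as ``the real difficulty,'' but it is the entire content of the step, and the proposal supplies no mechanism for it.

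The paper closes exactly this gap by a different idea (following \cite{T}): it reduces initial conditions at level $k$ to \emph{difference conditions at the lower level} $k'=k_0+k_2$. If $k''=b_1+a_1>k_0+k_2$, then the vertex-operator identity $x_2(z)^{k''}=0$ holds on $L(k_0\Lambda_0+k_2\Lambda_2)\subset L(\Lambda_0)^{\otimes k_0}\otimes L(\Lambda_2)^{\otimes k_2}$; applying the adjoint action of $(x_{-\varepsilon_2})^{b_1}$ and extracting the coefficient of $z^0$ yields an honest relation $R$ in the enveloping algebra with leading term $x_0(-1)^{b_1}x_2(-1)^{a_1}$, all other terms being strictly higher or containing some $x_\gamma(j)$ with $j\geq 0$. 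Since $x_\gamma(-1)v_{\Lambda_1}=0$ by Lemma~\ref{L:initial conditions for W(Lambda1)}, the relation passes through the $v_{\Lambda_1}$ factors and gives $Rv_\Lambda=0$ on the level $k$ module, so the violator is a combination of strictly higher monomial vectors; the case $c_1+b_1>k_0+k_2$ is analogous. This vertex-algebraic input is the missing source of relations in your argument. To salvage your elementary route you would have to prove directly that the span of the tensor expansions of the violating degree-one monomials is contained in the span of the expansions of the allowed ones --- a nontrivial rank computation over the states $x_0(-1)^2v_{\Lambda_0}$, $x_{\underline{2}}(-1)v_{\Lambda_2}$, $x_0(-1)v_{\Lambda_2}$, etc. --- not a pattern-by-pattern substitution.
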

\begin{proof} We have already mentioned how the relation
$x_\theta(z)\sp{k+1}=0$ on level $k$ standard module $L(\Lambda)$
leads to a spanning set of monomial vectors satisfying difference
conditions \eqref{E:difference conditions}. By following an idea
from \cite{T} we reduce the problem of initial conditions
\eqref{E:initial conditions} for level $k$ Feigin-Stoyanovsky type
subspace to a problem of difference conditions for level $k'<k$
Feigin-Stoyanovsky type subspace: we shall consider $\hat{\mathfrak
g}$-submodules in tensor products
$$
L(\Lambda_0)\sp{\otimes k_0} \otimes L(\Lambda_{2})\sp{\otimes k_2}
\quad\text{and}\quad L(\Lambda_1)\sp{\otimes k_1} \otimes\left(
L(\Lambda_{0})\sp{\otimes k_0}\otimes L(\Lambda_{2})\sp{\otimes
k_2}\right)
$$
of levels $k'=k_0+k_2$ and $k=k_0+k_1+k_2$\, generated by highest
weight vectors
$$
v_{\Lambda_0}\sp{\otimes k_0}\otimes v_{\Lambda_{2}}\sp{\otimes
k_2}\quad\text{and}\quad v_{\Lambda_1}\sp{\otimes k_1}\otimes\left(
v_{\Lambda_0}\sp{\otimes k_0}\otimes v_{\Lambda_{2}}\sp{\otimes
k_2}\right).
$$
Assume that for
$$
x(\pi)=x_{\underline{2}}(-1)\sp{c_1}x_{0}(-1)\sp{b_1}
x_{2}(-1)\sp{a_1}
$$
the monomial vector $x(\pi)v_\Lambda$ does not satisfy initial
conditions \eqref{E:initial conditions}
$$
a_1\leq k_0, \quad b_1+a_{1}\leq k_0+k_2, \quad c_1+b_{1}\leq
k_0+k_2.
$$
By the above lemmas
$$
x_2(-1)v_{\Lambda_1}=0,\quad x_2(-1)v_{\Lambda_2}=0,\quad
x_2(-1)\sp2v_{\Lambda_0}=0,
$$
so in the case when $a_1>k_0$ we have that the vector
$$
x_2(-1)\sp{a_1}\left(v_{\Lambda_1}\sp{\otimes k_1}\otimes
v_{\Lambda_0}\sp{\otimes k_0}\otimes v_{\Lambda_{2}}\sp{\otimes
k_2}\right)=x_2(-1)\sp{a_1-k_0}\left(v_{\Lambda_1}\sp{\otimes
k_1}\otimes \left(x_2(-1)v_{\Lambda_0}\right)\sp{\otimes k_0}\otimes
v_{\Lambda_{2}}\sp{\otimes k_2}\right)
$$
equals zero and we may omit it from our spanning set of monomial
vectors.

Now assume that the monomial vector $x(\pi)v_\Lambda$ does not
satisfy initial conditions because
$$
 k''=b_1+a_{1}> k_0+k_2.
$$
Then we have a relation
$$
x_2(z)\sp{b_1+a_{1}}=0\quad\text{on}\quad
L(k_0\Lambda_0+k_2\Lambda_2)\subset L(\Lambda_0)\sp{\otimes k_0}
\otimes L(\Lambda_{2})\sp{\otimes k_2},
$$
and by the adjoint action of $(x_{-\varepsilon_2})\sp{b_1}$ we get
$$
x_0(z)\sp{b_1}x_2(z)\sp{a_{1}}+\dots+
c_{u,v,t}\,x_{\underline{2}}(z)\sp{u}x_0(z)\sp{v}x_2(z)\sp t+\dots=0
$$
with $a_1<t$. The coefficient of $z\sp0$ gives us a relation
$$
R=x_0(-1)\sp{b_1}x_2(-1)\sp{a_{1}}+\dots+
c'_{u,v,t}\,x_{\underline{2}}(-1)\sp{u}x_0(-1)\sp{v}x_2(-1)\sp
t+\dots=0
$$
on $L(k_0\Lambda_0+k_2\Lambda_2)$. The coefficient $R$ is an
infinite sum with the leading term
\begin{equation}\label{E:leading term for initial condition}
x_0(-1)\sp{b_1}x_2(-1)\sp{a_{1}}.
\end{equation}
In $R$ we have monomials of the form $x_{\gamma_1}(j_1)\dots
x_{\gamma_{k''}}(j_{k''})$ with $j_1+\dots+j_{k''}=-k''$, so either
$j_1=\dots=j_{k''}=-1$ or we have $j_s\geq 0$ for some $s$. Hence
Lemma~\ref{L:initial conditions for W(Lambda1)} and
$$
x_\gamma(j)v_{\Lambda_i}=0\quad\text{for all}\quad \gamma\in\Gamma,\
j\geq 0 \quad\text{and}\quad  i=0,1,2
$$
imply
$$
Rv_{\Lambda}= R\left(v_{\Lambda_1}\sp{\otimes k_1}\otimes\left(
v_{\Lambda_0}\sp{\otimes k_0}\otimes v_{\Lambda_{2}}\sp{\otimes
k_2}\right)\right)= v_{\Lambda_1}\sp{\otimes k_1}\otimes R\left(
v_{\Lambda_0}\sp{\otimes k_0}\otimes v_{\Lambda_{2}}\sp{\otimes
k_2}\right)=0.
$$
Since the monomial \eqref{E:leading term for initial condition} is
the leading term of the relation $Rv_{\Lambda}=0$, we can express
$$
x_0(-1)\sp{b_1}x_2(-1)\sp{a_{1}}v_{\Lambda}
$$
as a combination of higher monomial vectors and we may omit it from
the spanning set. In a similar way we argue in the case when
$c_1+b_{1}> k_0+k_2$.
\end{proof}

\section{Simple current operators}

Recall that we have fixed a cominimal coweight
$\omega=\omega_1=\varepsilon_1\in\mathfrak h$. We shall use simple
current operators $[\omega]$ on level $1$ modules, i.e. linear
bijections
\begin{equation*}
L(\Lambda_0)\overset{[\omega]}\longrightarrow
L(\Lambda_{1})\overset{[\omega]}\longrightarrow
L(\Lambda_{0}),\qquad L(\Lambda_{2})
\overset{[\omega]}\longrightarrow L(\Lambda_{2})
\end{equation*}
such that
$$
x_\alpha (z)[\omega]=[\omega]z\sp{\alpha(\omega)}x_\alpha(z) \quad
\text{for all}\quad \alpha\in R
$$
or, written by components,
\begin{equation}\label{commutation of omega and x alpha}
x_\alpha (n)[\omega]=[\omega]x_\alpha(n+\alpha(\omega)) \quad
\text{for all}\quad \alpha\in R, \ n\in\mathbb Z.
\end{equation}

\begin{remark}
It is easy to see that, up to a scalar multiple, the linear
bijection $[\omega]$ between two irreducible modules is uniquely
determined by \eqref{commutation of omega and x alpha}. We can prove
the existence of such a map in several ways.

In the ADE case, when the lattice construction of level one $\hat
g$-modules is available, for minimal weight $\omega$ we have
$$
[\omega]\sim e\sp\omega
$$
(see \cite{G}, \cite{CLM1} or \cite{P3} for notation and details).
For level $k$  modules we consider a tensor product of $k$ level one
modules and we have
$$
[\omega]\sim e\sp\omega\otimes\dots\otimes e\sp\omega
$$
such that \eqref{commutation of omega and x alpha} holds.

Haisheng Li pointed out that in general the map $[\omega]$ can be
interpreted in terms of simple currents. In \cite{DLM} a module $M$
for a vertex operator algebra $V$ is called a simple current if the
tensor functor ``$M\boxtimes\ \cdot\ $'' is a bijection on the set
of equivalence classes $\text{\rm Irr} (V)$ of irreducible
$V$-modules. In \cite{DLM} simple currents $M$ for affine Lie
algebras are constructed by deforming vertex operators $Y_V(\cdot,
z)$ for simple vertex operator algebras $V=L(k\Lambda_0)$ with
formal Laurent series
$$
\Delta(\omega,z)=z\sp\omega  \exp -\biggl(\sum_{n>0} \omega (n)
(-z)^{- n}\big/ n\biggr)
$$
so that
$$
Y_{M}(\cdot,z)=Y_{V}(\Delta(\omega,z)\cdot,z).
$$

To prove the existence of $[\omega]$ for $B_2\sp{(1)}$ we may use a
related Dong-Li-Mason's result that for a representation
$L(\Lambda)$ of \,$\hat{\mathfrak g}$ on a vector space $W$,
realized by a vertex operator $Y_{L(\Lambda)}(\cdot,z)$, we also
have another representation $L(\Lambda')$ on the same vector space
$W$, but realized by a deformed vertex operator
$$
Y_{L(\Lambda')}(\cdot,z)=Y_{L(\Lambda)}(\Delta(\omega,z)\cdot,z)
$$
(cf. \cite{DLM} and \cite{L2}). Then
$$
[\omega]\colon L(\Lambda)\to L(\Lambda')
$$
can be interpreted as the identity map
$$
\text{id\,}\colon W\to W
$$
on the vector space $W$ endowed with two different structures of
$\hat{\mathfrak g}$-modules, $L(\Lambda)$ and $L(\Lambda')$.

We can also prove the existence of $[\omega]$ for $B_2\sp{(1)}$ by
following the approach of J.~Fuchs in \cite{Fu}: a representation
$L(\Lambda)$ of \,$\hat{\mathfrak g}$ on a vector space $W$, given
by
$$
\pi\colon\hat{\mathfrak g}\to \text{\rm End}\, (W),
$$
can be changed to a new representation $L(\Lambda')$ on the same
vector space $W$ by considering a composition
$$
\pi\circ\sigma\colon\hat{\mathfrak g}\to \text{\rm End}\, (W)
$$
of representation $\pi$ with an automorphism $\sigma$ of
$\hat{\mathfrak g}$ defined by
$$
\sigma\left(x_\alpha (n)\right)=x_\alpha(n+\alpha(\omega)) \quad
\text{for all}\quad \alpha\in R, \ n\in\mathbb Z.
$$
Then again $[\omega]\colon L(\Lambda)\to L(\Lambda')$ can be
interpreted as the identity map on $W$.
\end{remark}

\begin{remark} In our later arguments by induction on degree, we use the
map $[\omega]$ in essentially the same way as it is used in \cite{G}
and \cite{CLM1}: we ``move'' monomial vectors from one space to
another and, due to \eqref{commutation of omega and x alpha}, we
``lower'' their degrees in the process. For this reason we use the
same notation $[\omega]$ for all these different maps on different
spaces, including the corresponding maps on tensor products of level
one modules and on the symmetric algebra of level one modules (cf.
equation \eqref{commutation of omega and x} and Remark~\ref{R:omega
on symm.algebra} bellow). It should be noted that $[\omega]$
``behaves like a group element'' (cf. Remark~\ref{R:omega is a 4th
root of e} bellow).
\end{remark}

We fix $v_{\Lambda_0}=\bf 1$ in the vertex operator algebra
$L(\Lambda_{0})$. Then we have
\begin{lemma}\label{L:simple current initial conditions for 0 and 1} With properly
normalized $v_{\Lambda_1}$ and $x_2$
\begin{enumerate}
\item $[\omega]v_{\Lambda_0}=v_{\Lambda_1}$,
\item
$[\omega]v_{\Lambda_1}=x_{\underline{2}}(-1)x_2(-1)v_{\Lambda_0}$.
\end{enumerate}
\end{lemma}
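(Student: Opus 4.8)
The plan is to prove both parts by forcing the image of a highest weight vector under $[\omega]$ into a one-dimensional space, which then fixes the normalizations.

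For (1), I would verify that $[\omega]v_{\Lambda_0}$ is a singular (maximal) vector of the irreducible module $L(\Lambda_1)$, i.e. that it is annihilated by the Chevalley generators $e_1=x_{\alpha_1}(0)=x_{\underline 2}(0)$, $e_2=x_{\alpha_2}(0)=x_{\varepsilon_2}(0)$ and $e_0=x_{-\theta}(1)$. In each case I move the generator across $[\omega]$ by (\ref{commutation of omega and x alpha}) and then use $x_\gamma(j)v_{\Lambda_0}=0$ for $j\geq 0$. For example $\alpha_1(\omega)=1$ gives $x_{\underline 2}(0)[\omega]v_{\Lambda_0}=[\omega]x_{\underline 2}(1)v_{\Lambda_0}=0$, and $\alpha_2(\omega)=0$ gives $x_{\varepsilon_2}(0)[\omega]v_{\Lambda_0}=[\omega]x_{\varepsilon_2}(0)v_{\Lambda_0}=0$; since the top of $L(\Lambda_0)$ is the trivial $\mathfrak g$-module we even have $x_{-\theta}(0)v_{\Lambda_0}=0$, which disposes of $e_0$ because $(-\theta)(\omega)=-1$. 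As $[\omega]$ is a bijection, $[\omega]v_{\Lambda_0}\neq 0$, and a nonzero singular vector in the irreducible $L(\Lambda_1)$ is a scalar multiple of $v_{\Lambda_1}$; this fixes the normalization of $v_{\Lambda_1}$ and proves (1).

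For (2) I would use (1) to write $[\omega]v_{\Lambda_1}=[\omega]^2v_{\Lambda_0}=:u\in L(\Lambda_0)$ and then determine the graded piece in which $u$ lies. The $\mathfrak h$-weight is immediate from the $h(0)$-form of (\ref{commutation of omega and x alpha}): $[\omega]$ shifts weights by $\omega=\varepsilon_1$, so $u$ has weight $2\varepsilon_1$. For the $d$-degree I would invoke the conformal-weight shift carried by the simple current twist of \cite{DLM} and \cite{L2}, namely $L'(0)=L(0)+\omega(0)+\tfrac12\langle\omega,\omega\rangle$; applied twice starting from the vacuum it places $u$ in $d$-degree $-2$ (equivalently, the degree can be read off the annihilation conditions $x_\gamma(m)u=0$ for $\gamma\in\Gamma$, $m\geq 0$, obtained exactly as in (1)). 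Since the top of $L(\Lambda_0)$ is trivial, $L(\Lambda_0)=U(\tilde{\mathfrak g}_{<0})v_{\Lambda_0}$, and a short Poincar\'e--Birkhoff--Witt count shows that the only monomials of degree $-2$ and weight $2\varepsilon_1$ applied to $v_{\Lambda_0}$ are $x_{\underline 2}(-1)x_2(-1)v_{\Lambda_0}$ and $x_0(-1)^2v_{\Lambda_0}$. By Lemma~\ref{L:initial conditions for W(Lambda0)}(3) these are proportional with nonzero constant, so the weight space $L(\Lambda_0)_{-2,\,2\varepsilon_1}$ is at most one-dimensional; as $u\neq 0$ it is exactly this line and $u=c\,x_{\underline 2}(-1)x_2(-1)v_{\Lambda_0}$ with $c\neq 0$. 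Rescaling $x_2$ absorbs $c$ and gives (2); this is compatible with (1), whose argument never involved $x_2$.

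The step I expect to be the main obstacle is the correct identification of the $d$-degree of $u$. The weight shift under $[\omega]$ is elementary, but the degree shift is weight-dependent---the first $[\omega]$ preserves degree on $v_{\Lambda_0}$ while the second lowers it by $2$ on $v_{\Lambda_1}$---so it needs either the explicit conformal-weight formula for the twist or a careful bookkeeping of the annihilation conditions from (\ref{commutation of omega and x alpha}). Once this bidegree is fixed, the one-dimensionality provided by Lemma~\ref{L:initial conditions for W(Lambda0)}(3) makes the identification and the normalization routine.
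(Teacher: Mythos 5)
Your proof is correct, and while part (1) coincides with the paper's argument (check that $[\omega]v_{\Lambda_0}$ is killed by $x_{-\theta}(1)$ and $x_{\alpha_i}(0)$ via (\ref{commutation of omega and x alpha}), then invoke irreducibility), your part (2) takes a genuinely different route. The paper works \emph{backwards}: it applies $[\omega]^{-1}$ to the candidate vector $x_{\underline{2}}(-1)x_2(-1)v_{\Lambda_0}$ and verifies directly, using $x_\alpha(n)[\omega]^{-1}=[\omega]^{-1}x_\alpha(n-\alpha(\omega))$ together with Lemma~\ref{L:initial conditions for W(Lambda0)} (e.g.\ $x_{\alpha_1}(0)[\omega]^{-1}x_{\underline{2}}(-1)x_2(-1)v_{\Lambda_0}=[\omega]^{-1}x_{\underline{2}}(-1)^2x_2(-1)v_{\Lambda_0}=0$), that the pullback is a highest weight vector of weight $\Lambda_1$, hence proportional to $v_{\Lambda_1}$. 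You instead push \emph{forward}: you locate $u=[\omega]^2v_{\Lambda_0}$ in bidegree (weight $2\varepsilon_1$, $d$-degree $-2$) via the conformal-weight shift of the simple current, and then cut the target down to a line by a PBW count (the only candidates are $x_{\underline{2}}(-1)x_2(-1)v_{\Lambda_0}$ and $x_0(-1)^2v_{\Lambda_0}$, which your count correctly exhausts) combined with Lemma~\ref{L:initial conditions for W(Lambda0)}(3). Both are sound, and yours buys something the paper leaves implicit: since $[\omega]$ is a bijection, $u\neq0$, so the nonvanishing of $x_{\underline{2}}(-1)x_2(-1)v_{\Lambda_0}$ (needed in either approach to normalize) comes for free. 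The price is that you need the shift $L(0)[\omega]=[\omega]\big(L(0)+\omega+\tfrac12\langle\omega,\omega\rangle k\big)$, which the paper only establishes later (Lemma~\ref{L: degree for level k omega}) from $\Delta(\omega,z)$, though it is indeed available from \cite{DLM} and \cite{L2}. Two small inaccuracies worth fixing: the $h(0)$- and $L(0)$-shifts are not an ``$h(0)$-form'' of (\ref{commutation of omega and x alpha})---that relation is stated only for root vectors, and the shifts come from the $z^\omega$ factor in $\Delta(\omega,z)$, exactly as in the paper's proof of part (1); and your parenthetical claim that the degree of $u$ ``can be read off the annihilation conditions'' is too loose, since $x_\gamma(m)u=0$ for $m\geq0$, $\gamma\in\Gamma$ alone does not pin down a degree---your main argument via the conformal-weight formula is the one that works.
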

\begin{proof}
(1) For a level $k$ standard $\hat{\mathfrak g}$-module $L(\Lambda)$
the new module structure
 $Y_{L(\Lambda')}(\cdot,z)$ $=Y_{L(\Lambda)}(\Delta(\omega,z)\cdot,z)$ gives
$$
h(0)[\omega]v_\Lambda=[\omega]\big(h(0)+\langle \omega, h\rangle
k\big)v_\Lambda\quad\text{for}\quad h\in\mathfrak h.
$$
In particular, $[\omega]v_{\Lambda_0}$ is a weight vector with level
$1$ weight $\Lambda_1=\Lambda_0+\langle\omega, \cdot\rangle$.
Relation \eqref{commutation of omega and x alpha} gives
$$
\begin{aligned}
&x_{-\theta}(1)[\omega]v_{\Lambda_0}=[\omega]x_{-\theta}(1-\theta(\omega))v_{\Lambda_0}
=[\omega]x_{-\theta}(0)v_{\Lambda_0}=0 \quad\text{and}\\
&x_{\alpha_i}(0)[\omega]v_{\Lambda_0}=[\omega]x_{\alpha_i}(0+\alpha_i(\omega))v_{\Lambda_0}
=[\omega]x_{\alpha_i}(\delta_{i1})v_{\Lambda_0}=0
\quad\text{for}\quad i=1,2.\\
\end{aligned}
$$
Hence $[\omega]v_{\Lambda_0}$ is a highest weight vector and
$L(\Lambda'_0)=L(\Lambda_1)$.

(2) Like in (1) we first see that
$[\omega]\sp{-1}x_{\underline{2}}(-1)x_2(-1)v_{\Lambda_0}$ is a
weight vector with weight $\Lambda_1$. By using \eqref{commutation
of omega and x alpha} and Lemma~\ref{L:initial conditions for
W(Lambda0)} we obtain
$$
\begin{aligned}
&x_{-\theta}(1)[\omega]\sp{-1}x_{\underline{2}}(-1)x_2(-1)v_{\Lambda_0}=
[\omega]\sp{-1}x_{-\theta}(2)x_{\underline{2}}(-1)x_2(-1)v_{\Lambda_0}
=0,\\
&x_{\alpha_1}(0)[\omega]\sp{-1}x_{\underline{2}}(-1)x_2(-1)v_{\Lambda_0}=
[\omega]\sp{-1}x_{\alpha_1}(-1)x_{\underline{2}}(-1)x_2(-1)v_{\Lambda_0}=0,
\\
&x_{\alpha_2}(0)[\omega]\sp{-1}x_{\underline{2}}(-1)x_2(-1)v_{\Lambda_0}=
[\omega]\sp{-1}x_{\alpha_2}(0)x_{\underline{2}}(-1)x_2(-1)v_{\Lambda_0}=0.
\end{aligned}
$$
Hence (2) holds and $L(\Lambda'_1)=L(\Lambda_0)$.
\end{proof}

\begin{lemma}\label{L:simple current initial conditions for 2} With properly
normalized $[\omega]$, $w_{\underline{2}}$ and $x_0,
x_{\underline{2}}$
\begin{enumerate}
\item
$[\omega]v_{\Lambda_2}=x_0(-1)v_{\Lambda_2}=x_2(-1)w_{\underline{2}}$,
\item
$[\omega]w_{\underline{2}}=x_0(-1)w_{\underline{2}}=x_{\underline{2}}(-1)v_{\Lambda_2}$.
\end{enumerate}
\end{lemma}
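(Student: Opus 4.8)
The plan is to follow the template of Lemma~\ref{L:simple current initial conditions for 0 and 1}: to recognise a given vector as $[\omega]v_{\Lambda_2}$ it suffices to apply the bijection $[\omega]^{-1}$ and check that the outcome is a highest weight vector, hence a multiple of $v_{\Lambda_2}$. I would organise the whole statement around part (1). Indeed, since $(-\varepsilon_2)(\omega)=0$, relation (\ref{commutation of omega and x alpha}) shows that $[\omega]$ commutes with $x_{-\varepsilon_2}(0)$; as $w_{\underline 2}=x_{-\varepsilon_2}(0)v_{\Lambda_2}$ up to a nonzero scalar inside the $4$-dimensional spinor top, part (2) will be extracted from part (1) by letting $x_{-\varepsilon_2}(0)$ act.

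For part (1) I would first record weights. The weight shift $h(0)[\omega]v_\Lambda=[\omega]\big(h(0)+\langle\omega,h\rangle k\big)v_\Lambda$ from the proof of Lemma~\ref{L:simple current initial conditions for 0 and 1} (here $k=1$) gives that $[\omega]v_{\Lambda_2}$, $x_0(-1)v_{\Lambda_2}$ and $x_2(-1)w_{\underline 2}$ share the $\mathfrak h$-weight $\omega_2+\varepsilon_1$; dually, $[\omega]\sp{-1}x_0(-1)v_{\Lambda_2}$ and $[\omega]\sp{-1}x_2(-1)w_{\underline 2}$ both have $\mathfrak h$-weight $\omega_2$, that of $v_{\Lambda_2}$. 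Next I would verify that these last two vectors are annihilated by the affine simple raising operators $x_{-\theta}(1)$, $x_{\underline 2}(0)$ and $x_{\varepsilon_2}(0)$. Commuting each operator through $[\omega]\sp{-1}$ by (\ref{commutation of omega and x alpha}) turns every check into a finite $\mathfrak g$-computation on the top of $L(\Lambda_2)$; for instance $x_{\underline 2}(0)[\omega]\sp{-1}x_0(-1)v_{\Lambda_2}=[\omega]\sp{-1}x_{\underline 2}(-1)x_0(-1)v_{\Lambda_2}=0$ by Lemma~\ref{L:initial conditions for W(Lambda2)}, while the remaining cases collapse, after a single bracket, to the expressions $x_2(-1)v_{\Lambda_2}$ and $x_{\underline 2}(-1)w_{\underline 2}$ or to raising operators killing the top, all covered by Lemma~\ref{L:initial conditions for W(Lambda2)} together with the highest weight property of $v_{\Lambda_2}$. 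Since $L(\Lambda_2)$ is irreducible, a vector killed by all simple raising operators is forced into $\mathbb C v_{\Lambda_2}$, so $[\omega]\sp{-1}x_0(-1)v_{\Lambda_2}$ and $[\omega]\sp{-1}x_2(-1)w_{\underline 2}$ are scalar multiples of $v_{\Lambda_2}$.

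The main obstacle is to rule out that these scalars vanish, i.e. to show the candidate vectors are nonzero; here I would argue exactly as in Lemma~\ref{L:initial conditions for W(Lambda1)}, through the $A_1\sp{(1)}$-subalgebras $\widetilde{\mathfrak sl}_2(\alpha)$. The vector $v_{\Lambda_2}$ has $\mathfrak{sl}_2(\varepsilon_1)$-weight $1$, and since $\varepsilon_1$ is short, $U(\widetilde{\mathfrak sl}_2(\varepsilon_1))v_{\Lambda_2}$ is the level two standard module $L(\Lambda_0+\Lambda_1)$, in which the first $\alpha_0$-lowering $x_0(-1)v_{\Lambda_2}=x_{\varepsilon_1}(-1)v_{\Lambda_2}$ is nonzero (only $x_0(-1)\sp2v_{\Lambda_2}$ is annihilated). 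Likewise $w_{\underline 2}$ is a highest weight vector for $\widetilde{\mathfrak sl}_2(\theta)$ of weight $0$, and since $\theta$ is long, $U(\widetilde{\mathfrak sl}_2(\theta))w_{\underline 2}=L(\Lambda_0)$ at level one, in which $x_2(-1)w_{\underline 2}=x_\theta(-1)w_{\underline 2}$ is the nonzero first $\alpha_0$-lowering. Hence both scalars are nonzero, all three vectors in (1) are proportional, and after rescaling $[\omega]$, $x_0$ and $w_{\underline 2}$ the equalities of part (1) hold.

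Finally, for part (2) I would apply $x_{-\varepsilon_2}(0)$, which commutes with $[\omega]$, to the identities of part (1). From $[\omega]w_{\underline 2}=x_{-\varepsilon_2}(0)[\omega]v_{\Lambda_2}=x_{-\varepsilon_2}(0)x_0(-1)v_{\Lambda_2}$ and the bracket $[x_{-\varepsilon_2},x_{\varepsilon_1}]=s\,x_{\underline 2}$ with $s\neq 0$, one obtains $[\omega]w_{\underline 2}$ as a combination of $x_0(-1)w_{\underline 2}$ and $x_{\underline 2}(-1)v_{\Lambda_2}$. Applying the same $x_{-\varepsilon_2}(0)$ to the equality $x_0(-1)v_{\Lambda_2}=x_2(-1)w_{\underline 2}$ and using $x_{-\varepsilon_2}(0)w_{\underline 2}=0$ forces $x_{\underline 2}(-1)v_{\Lambda_2}$ and $x_0(-1)w_{\underline 2}$ to be proportional, which collapses the combination to a single line. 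Since $[\omega]w_{\underline 2}\neq 0$, the common scalar is nonzero, and rescaling $x_{\underline 2}$ yields the two equalities of part (2).
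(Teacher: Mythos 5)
Your overall route coincides with the paper's: apply the bijection $[\omega]^{-1}$, use (\ref{commutation of omega and x alpha}) to turn annihilation by $x_{-\theta}(1)$, $x_{\alpha_1}(0)$, $x_{\alpha_2}(0)$ into degree-zero computations on the spinor top that are covered by Lemma~\ref{L:initial conditions for W(Lambda2)}, conclude by weight and irreducibility that the candidate is a multiple of $v_{\Lambda_2}$, and then extract part (2) by acting with $x_{-\varepsilon_2}(0)$, which commutes with $[\omega]$ since $(-\varepsilon_2)(\omega)=0$. Your explicit nonvanishing checks through the subalgebras $\widetilde{\mathfrak sl}_2(\varepsilon_1)$ and $\widetilde{\mathfrak sl}_2(\theta)$ (level two and level one, giving $L(\Lambda_0+\Lambda_1)$ and $L(\Lambda_0)$ respectively, where the first $\alpha_0$-lowering of the highest weight vector is nonzero) are correct and make explicit a point the paper's proof leaves implicit; part (1) of your argument is complete.

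There is, however, a genuine gap at the very end of part (2). Applying $x_{-\varepsilon_2}(0)$ to $x_0(-1)v_{\Lambda_2}=x_2(-1)w_{\underline{2}}$, with $w_{\underline{2}}\propto x_{-\varepsilon_2}(0)v_{\Lambda_2}$ and $x_{-\varepsilon_2}(0)w_{\underline{2}}=0$, yields a relation of the form $s\,x_{\underline{2}}(-1)v_{\Lambda_2}=(t-c')\,x_0(-1)w_{\underline{2}}$ with $s\neq0$; ``proportional'' here includes the degenerate case $t=c'$, i.e.\ $x_{\underline{2}}(-1)v_{\Lambda_2}=0$. Your closing inference --- that $[\omega]w_{\underline{2}}\neq0$ makes ``the common scalar'' nonzero --- only shows that the one surviving combination of $x_0(-1)w_{\underline{2}}$ and $x_{\underline{2}}(-1)v_{\Lambda_2}$ is nonzero, which is perfectly consistent with $x_{\underline{2}}(-1)v_{\Lambda_2}=0$ and $x_0(-1)w_{\underline{2}}\neq0$; in that scenario the second equality of (2) fails, and rescaling $x_{\underline{2}}$ cannot repair it, since a rescaled zero vector is still zero. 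What you need, and do not prove, is the separate fact $x_{\underline{2}}(-1)v_{\Lambda_2}\neq0$. Fortunately it follows from one more application of your own device: $\underline{2}=\varepsilon_1-\varepsilon_2$ is a long root, $\langle\omega_2,\underline{2}\rangle=0$, and $\omega_2+\underline{2}$ is not a weight of the spinor top, so $v_{\Lambda_2}$ is a highest weight vector for $\widetilde{\mathfrak sl}_2(\underline{2})$ of finite weight $0$ at level one; hence $U(\widetilde{\mathfrak sl}_2(\underline{2}))v_{\Lambda_2}\cong L(\Lambda_0)$, in which the first lowering $x_{\underline{2}}(-1)v_{\Lambda_2}$ is nonzero. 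With this added, the collapse forces $t\neq c'$, both vectors in (2) are nonzero, and the normalizations go through; note that the paper's own one-line appeal to Lemma~\ref{L:initial conditions for W(Lambda2)} for the second equality of (2) must in substance be completed in the same way.
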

\begin{proof} (1) As in the proof of previous lemma we see that
$[\omega]\sp{-1}x_0(-1)v_{\Lambda_2}$ is a weight vector with weight
$\Lambda_2$. By using \eqref{commutation of omega and x alpha} and
Lemma~\ref{L:initial conditions for W(Lambda2)} we obtain
$$
\begin{aligned}
&x_{-\theta}(1)[\omega]\sp{-1}x_0(-1)v_{\Lambda_2}=
[\omega]\sp{-1}x_{-\theta}(2)x_0(-1)v_{\Lambda_0}
=0,\\
&x_{\alpha_1}(0)[\omega]\sp{-1}x_0(-1)v_{\Lambda_2}=
[\omega]\sp{-1}x_{\alpha_1}(-1)x_0(-1)v_{\Lambda_2}=0,
\\
&x_{\alpha_2}(0)[\omega]\sp{-1}x_0(-1)v_{\Lambda_2}=
[\omega]\sp{-1}x_{\alpha_2}(0)x_0(-1)v_{\Lambda_2}=0.
\end{aligned}
$$
Hence, with a proper normalization,
$[\omega]\sp{-1}x_0(-1)v_{\Lambda_2}=v_{\Lambda_2}$. The second
equality follows from Lemma~\ref{L:initial conditions for
W(Lambda2)} because
$$
0=x_{-\alpha_2}(0)0=x_{-\alpha_2}(0)x_2(-1)v_{\Lambda_2}
=C'x_0(-1)v_{\Lambda_2}+C''x_2(-1)w_{\underline{2}}
$$
for some $C',C''\neq 0$.

(2) The first equality follows from (1) by using the fact that
$w_{\underline{2}}$ is proportional to
$x_{-\alpha_2}(0)v_{\Lambda_2}$ and the fact that $x_{-\alpha_2}(0)$
commutes with $[\omega]$. The second equality follows from
Lemma~\ref{L:initial conditions for W(Lambda2)}.
\end{proof}

We define a linear bijection $[\omega]$ on a tensor product of $k$
fundamental modules as
$$
[\omega]\otimes\dots \otimes[\omega]\colon\bigotimes_{s=1}\sp k
L(\Lambda_{i_s})\to \bigotimes_{s=1}\sp k L(\Lambda_{i'_s}).
$$
It is clear that relation \eqref{commutation of omega and x alpha}
holds for $[\omega]=[\omega]\otimes\dots \otimes[\omega]$. In
particular,
\begin{equation}\label{commutation of omega and x}
x_\gamma (n)[\omega]=[\omega]x_\gamma(n+1) \quad \text{for}\quad
\gamma\in\Gamma.
\end{equation}
For a colored partition $\mu$ we set
$\mu\sp{+}(x_\gamma(n+1))=\mu(x_\gamma(n))$. Then for monomials
relation \eqref{commutation of omega and x} reads as
\begin{lemma}\label{L:commutation of omega and monomials}
\quad$x(\mu)[\omega]=[\omega]x(\mu\sp+)$.
\end{lemma}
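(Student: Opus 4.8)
The plan is to prove the identity by a straightforward induction on the number of factors (equivalently, on the total degree) of the monomial $x(\mu)$, using relation (\ref{commutation of omega and x}) as the single-step commutation rule. First I would record the combinatorial meaning of the operation $\mu\mapsto\mu^{+}$: since $\mu^{+}(x_\gamma(n+1))=\mu(x_\gamma(n))$, the monomial $x(\mu^{+})$ is obtained from $x(\mu)$ by replacing every part $x_\gamma(n)$ with $x_\gamma(n+1)$, i.e.\ by raising the degree of each factor by one. Because $\tilde{\mathfrak g}_1$ is commutative (as noted in Section~2), both $x(\mu)$ and $x(\mu^{+})$ are well-defined elements of $U(\tilde{\mathfrak g}_1)=S(\tilde{\mathfrak g}_1)$, independent of the order in which we list the parts.

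For the induction, I would write $x(\mu)$ as an ordered product $x(\nu)\,x_\gamma(n)$ of its parts, where $x_\gamma(n)$ is a chosen rightmost factor and $x(\nu)$ collects the remaining parts. The base case, the empty monomial, is the tautology $[\omega]=[\omega]$. For the inductive step I would move $[\omega]$ past the single rightmost factor using (\ref{commutation of omega and x}),
\begin{equation*}
x(\mu)[\omega]=x(\nu)\,x_\gamma(n)[\omega]=x(\nu)[\omega]\,x_\gamma(n+1),
\end{equation*}
and then apply the induction hypothesis $x(\nu)[\omega]=[\omega]x(\nu^{+})$ to obtain $[\omega]\,x(\nu^{+})\,x_\gamma(n+1)$. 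It then remains only to observe that $x(\nu^{+})\,x_\gamma(n+1)=x(\mu^{+})$, which is exactly the statement that shifting the parts of $\nu$ and adjoining the shifted factor $x_\gamma(n+1)$ reproduces the shifted partition $\mu^{+}$.

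Honestly, there is no serious obstacle here: the content is entirely carried by the operator identity (\ref{commutation of omega and x}), and the remainder is pure bookkeeping. The only points demanding a little care are the convention that each application of (\ref{commutation of omega and x}) moves $[\omega]$ exactly one slot to the left while leaving the already-shifted factors on its right undisturbed---this uses only associativity of operator composition, not commutativity---and the use of commutativity of $\tilde{\mathfrak g}_1$ to guarantee that the shifted product $x(\mu^{+})$ is independent of the order in which the parts were processed, so that the resulting monomial agrees with the one defined by $\mu^{+}$.
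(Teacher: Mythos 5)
Your proof is correct and follows exactly the route the paper intends: the paper states Lemma~\ref{L:commutation of omega and monomials} as an immediate factor-by-factor consequence of relation (\ref{commutation of omega and x}), and your induction on the number of parts simply makes that iteration explicit. Your added remarks on commutativity of $\tilde{\mathfrak g}_1$ and the well-definedness of $x(\mu^{+})$ are accurate bookkeeping, nothing more is needed.
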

\smallskip

\begin{remark}\label{R:notation for shifted partitions}
For $x(\mu)=\prod x_\gamma(n)\sp{m_\gamma(n)}$ we have
$x(\mu\sp+)=\prod x_\gamma(n+1)\sp{m_\gamma(n)}$, so we may say that
$x(\mu\sp+)$ is obtained from a monomial $x(\mu)$ by ``shifting
degrees of factors'' $x_\gamma(n)\to x_\gamma(n+1)$. Later on we
shall also use a notation
$\mu\sp{p}(x_\gamma(n+p))=\mu(x_\gamma(n))$ for any for $p\in\mathbb
Z$, and we shall write $\mu\sp{+p}$ when we want to emphasize the
shift of degrees of factors.
\end{remark}

From Lemmas \ref{L:simple current initial conditions for 0 and 1},
\ref{L:simple current initial conditions for 2}, \ref{L:initial
conditions for W(Lambda1)}, \ref {L:initial conditions for
W(Lambda0)} and \ref{L:initial conditions for W(Lambda2)} we have
\begin{equation}\label{E:v(Lambda to Lambda star)}
\begin{aligned}
&[\omega]\left(v_{\Lambda_0}\sp{\otimes k_0}\otimes
v_{\Lambda_1}\sp{\otimes k_1}\otimes v_{\Lambda_{2}}\sp{\otimes
k_2}\right)\\
&=\big([\omega]v_{\Lambda_0}\big)\sp{\otimes k_0}\otimes
\big([\omega]v_{\Lambda_1}\big)\sp{\otimes k_1}\otimes
\big([\omega]v_{\Lambda_2}\big)\sp{\otimes
k_2}\\
&=v_{\Lambda_1}\sp{\otimes k_0}\otimes
\big(x_{\underline{2}}(-1)x_2(-1)v_{\Lambda_0}\big)\sp{\otimes
k_1}\otimes
\big(x_0(-1)v_{\Lambda_{2}}\big)\sp{\otimes k_2}\\
&=C\,x_{\underline{2}}(-1)\sp{k_1}x_0(-1)\sp{k_2}x_2(-1)\sp{k_1}\left(v_{\Lambda_1}\sp{\otimes
k_0}\otimes v_{\Lambda_0}\sp{\otimes k_1}\otimes
v_{\Lambda_{2}}\sp{\otimes k_2}\right).
\end{aligned}
\end{equation}
For
\begin{equation}\label{E:Lambda to Lambda star}
\Lambda=k_0\Lambda_0+k_1\Lambda_1+k_2\Lambda_2\quad\text{set}\quad
\Lambda\sp*=k_1\Lambda_0+k_0\Lambda_1+k_2\Lambda_2.
\end{equation}
Then \eqref{E:v(Lambda to Lambda star)} and Lemma~\ref{L:commutation
of omega and monomials} imply
\begin{proposition}\label{P: omega preserves W's}
\quad $[\omega]\colon L(\Lambda)\to
L(\Lambda\sp*)$\quad\text{and}\quad $[\omega]\colon W(\Lambda)\to
W(\Lambda\sp*)$.
\end{proposition}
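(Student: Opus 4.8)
The plan is to deduce both inclusions from the single identity (\ref{E:v(Lambda to Lambda star)}) for $[\omega]v_\Lambda$, by sliding $[\omega]$ to the right past an arbitrary monomial acting on the highest weight vector. The common mechanism is this: whenever $[\omega]$ stands to the left of a product of factors $x_\alpha(n)$, the commutation relation (\ref{commutation of omega and x alpha}), rewritten as $[\omega]x_\alpha(n)=x_\alpha(n-\alpha(\omega))[\omega]$, pushes $[\omega]$ through each factor at the cost of a degree shift, leaving $[\omega]$ applied to $v_\Lambda$. Since (\ref{E:v(Lambda to Lambda star)}) expresses $[\omega]v_\Lambda$ as $C\,x_{\underline{2}}(-1)^{k_1}x_0(-1)^{k_2}x_2(-1)^{k_1}v_{\Lambda^*}$, a $\tilde{\mathfrak g}_1$-monomial applied to the highest weight vector $v_{\Lambda^*}$ of $L(\Lambda^*)$, the outcome always lands in $W(\Lambda^*)\subseteq L(\Lambda^*)$.

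I would treat the subspace statement first, as it is the cleaner case. Because $\tilde{\mathfrak g}_1$ is commutative, $W(\Lambda)=U(\tilde{\mathfrak g}_1)v_\Lambda$ is spanned by the monomial vectors $x(\mu)v_\Lambda$. Reading Lemma~\ref{L:commutation of omega and monomials} as $[\omega]x(\mu^{+})=x(\mu)[\omega]$, i.e.\ $[\omega]x(\nu)=x(\nu^{-1})[\omega]$ where $x(\nu^{-1})$ lowers the degree of every factor of $x(\nu)$ by one, I obtain
\[
[\omega]x(\nu)v_\Lambda=x(\nu^{-1})[\omega]v_\Lambda
=C\,x(\nu^{-1})\,x_{\underline{2}}(-1)^{k_1}x_0(-1)^{k_2}x_2(-1)^{k_1}v_{\Lambda^*}.
\]
The right-hand side is again a product of elements of the commutative algebra $\tilde{\mathfrak g}_1$ applied to $v_{\Lambda^*}$, hence lies in $U(\tilde{\mathfrak g}_1)v_{\Lambda^*}=W(\Lambda^*)$. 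Thus $[\omega]W(\Lambda)\subseteq W(\Lambda^*)$.

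For the module statement I would run the identical argument with the full relation (\ref{commutation of omega and x alpha}) in place of the lemma, the only extra input being that $L(\Lambda)=U(\tilde{\mathfrak g})v_\Lambda$ is spanned by products of \emph{root} vectors $x_{\beta_1}(n_1)\cdots x_{\beta_r}(n_r)v_\Lambda$ alone. This is where I expect the only real friction, since (\ref{commutation of omega and x alpha}) is stated for root vectors but $U(\tilde{\mathfrak g})$ also involves the Cartan currents $\mathfrak h\otimes t^n$. The point is that these are already accounted for: for $n\neq0$ each $h(n)$ is a bracket $[x_\alpha(m),x_{-\alpha}(n-m)]$ of root vectors, while $\mathfrak h\otimes t^0$ acts on $v_\Lambda$ by a scalar, so no genuinely new generators arise. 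Sliding $[\omega]$ to the right through a root-vector monomial then gives
\[
[\omega]\,x_{\beta_1}(n_1)\cdots x_{\beta_r}(n_r)v_\Lambda
=x_{\beta_1}(n_1-\beta_1(\omega))\cdots x_{\beta_r}(n_r-\beta_r(\omega))\,[\omega]v_\Lambda,
\]
and since $[\omega]v_\Lambda\in L(\Lambda^*)$ and $L(\Lambda^*)$ is a $\tilde{\mathfrak g}$-submodule, the right-hand side lies in $L(\Lambda^*)$; hence $[\omega]L(\Lambda)\subseteq L(\Lambda^*)$. Finally, because $[\omega]$ is a bijection of the ambient tensor products and $(\Lambda^*)^*=\Lambda$, applying the same reasoning to $\Lambda^*$ yields the reverse inclusions and upgrades both maps to bijections.
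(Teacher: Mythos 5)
Your argument is correct and is essentially the paper's own proof: the paper deduces the proposition in one line from relation (\ref{E:v(Lambda to Lambda star)}) together with Lemma~\ref{L:commutation of omega and monomials} (plus relation (\ref{commutation of omega and x alpha}) for the $L(\Lambda)$ statement), which is exactly the mechanism you use, and your bracket treatment of the Cartan currents $h(n)$ merely fills in a detail the paper leaves implicit. One minor caveat: your closing remark that the reverse inclusion plus injectivity ``upgrades both maps to bijections'' is not immediate as stated (surjectivity would need, e.g., a weight-space dimension count or relation (\ref{E: omega2 v(Lambda) to v(Lambda)})), but the proposition asserts only the mapping property, which you establish correctly.
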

\noindent This proposition and a construction in \cite{DLM} and
\cite{L2} show that $$[\omega]=[\omega]\otimes\dots
\otimes[\omega]$$ is a simple current operator for level $k$
standard modules.
\medskip

Virasoro algebra operators in a vertex operator algebra are usually
denoted by $L(n)$, $n\in \mathbb Z$. If we set
$L(0)v_\Lambda=C_\Lambda v_\Lambda$, then
$$
d=-L(0)+C_\Lambda\quad\text{on}\quad L(\Lambda).
$$
We have the following:
\begin{lemma}\label{L: degree for level k omega} For elements  $h$ of the Cartan
subalgebra $\mathfrak h$, and the Virasoro algebra element $L(0)$,
on level $k$ standard modules we have
\begin{enumerate}
\item\quad $[\omega]\sp{-n}h(0)\,[\omega]\sp{n}=h(0)+n\langle\omega, h\rangle
k$ \quad for all \quad  $n\in\mathbb Z$, and
\item\quad $[\omega]\sp{-n}L(0)\,[\omega]\sp{n}=L(0)+n\,\omega(0)+\tfrac{n\sp2}{2}\langle\omega, \omega\rangle
k$ \quad for all \quad  $n\in\mathbb Z$.
\end{enumerate}
\end{lemma}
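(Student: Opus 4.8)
The plan is to prove both identities first for $n=1$ and then to propagate them to all $n\in\mathbb Z$ by a short two-sided induction; the only analytic input is the $n=1$ case, which I would extract from Li's description of the new module structure $Y_{L(\Lambda')}(\cdot,z)=Y_{L(\Lambda)}(\Delta(\omega,z)\cdot,z)$. Throughout I read $h$ and $\omega$ as the zero modes $h(0)$ and $\omega(0)$, and I keep in mind that $\langle\omega,h\rangle k$ and $\langle\omega,\omega\rangle k$ are central scalars, hence fixed by any conjugation by $[\omega]$.

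For (1) the base case is already available: the computation in the proof of Lemma~\ref{L:simple current initial conditions for 0 and 1} gives $h(0)[\omega]=[\omega]\big(h(0)+\langle\omega,h\rangle k\big)$, that is $[\omega]^{-1}h[\omega]=h+\langle\omega,h\rangle k$ on level $k$ modules (on the tensor product $[\omega]=[\omega]\otimes\cdots\otimes[\omega]$ the $k$ factors each contribute $\langle\omega,h\rangle$ at level $1$, summing to $\langle\omega,h\rangle k$). For the inductive step I would write $[\omega]^{-n}h[\omega]^n=[\omega]^{-(n-1)}\big(h+\langle\omega,h\rangle k\big)[\omega]^{n-1}$ and push the central scalar through, so that one copy of $\langle\omega,h\rangle k$ is accumulated at each of the $n$ steps; the negative-$n$ direction follows symmetrically from $[\omega]h[\omega]^{-1}=h-\langle\omega,h\rangle k$. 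This gives the shift by $n\langle\omega,h\rangle k$.

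For (2) the new ingredient is the action of $\Delta(\omega,z)$ on the conformal vector $\nu=L(-2)\mathbf 1$ of the vertex operator algebra $L(\Lambda_0)$, where $Y(\nu,z)=\sum_m L(m)z^{-m-2}$. Using $\omega(0)\nu=0$, $\omega(1)\nu=\omega(-1)\mathbf 1$, and $[\omega(m),\omega(-1)]=\langle\omega,\omega\rangle k\,\delta_{m,1}$, the exponential in $\Delta(\omega,z)$ acts through its first and second order terms only, and I would obtain $\Delta(\omega,z)\nu=\nu+z^{-1}\omega(-1)\mathbf 1+\tfrac12 z^{-2}\langle\omega,\omega\rangle k\,\mathbf 1$. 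Comparing the coefficient of $z^{-2}$ in $Y_{L(\Lambda')}(\nu,z)=Y_{L(\Lambda)}(\Delta(\omega,z)\nu,z)$ then yields the base case $[\omega]^{-1}L(0)[\omega]=L(0)+\omega+\tfrac12\langle\omega,\omega\rangle k$. The induction now reads $[\omega]^{-n}L(0)[\omega]^n=[\omega]^{-1}\big([\omega]^{-(n-1)}L(0)[\omega]^{n-1}\big)[\omega]$; substituting the hypothesis and conjugating its $(n-1)\omega$ term by a single $[\omega]$ via the $n=1$ case of (1) produces the scalar $\big(\tfrac12+(n-1)+\tfrac{(n-1)^2}2\big)\langle\omega,\omega\rangle k$, and the identity $1+2(n-1)+(n-1)^2=n^2$ converts the linearly accumulated shifts into the asserted quadratic term $\tfrac{n^2}2\langle\omega,\omega\rangle k$.

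I expect the $n=1$ case of (2) to be the only real obstacle: everything downstream is bookkeeping, but the coefficient $\tfrac12\langle\omega,\omega\rangle k$ must be harvested honestly from the second-order term $\tfrac12 X^2\nu$ of the exponential, where it arises from the central term of $[\omega(1),\omega(-1)]$. I would also take care to fix the source/target convention for $[\omega]$ consistently with the normalization in Lemma~\ref{L:simple current initial conditions for 0 and 1}, so that the sign of the linear term comes out as $+\,n\omega$ rather than $-\,n\omega$; once the signs in the $n=1$ identities of (1) and (2) are pinned down, the two inductions close together.
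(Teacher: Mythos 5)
Your proposal is correct and is essentially the paper's own argument: the paper likewise views $[\omega]$ through $Y_{L(\Lambda')}(\cdot,z)=Y_{L(\Lambda)}(\Delta(\omega,z)\cdot,z)$, extracts the coefficient of $z^{-2}$ in $Y_{L(\Lambda)}\bigl(\Delta(\omega,z)L(-2)\mathbf{1},z\bigr)$ from exactly the three contributing terms you identify (yielding $L(0)[\omega]=[\omega]\bigl(L(0)+\omega+\tfrac12\langle\omega,\omega\rangle k\bigr)$), and then obtains (2) by induction, with (1) proved ``in a similar way''---your shortcut of quoting the $h(0)$-commutation already displayed in the proof of Lemma~\ref{L:simple current initial conditions for 0 and 1} is an inessential variant. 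The only discrepancy is that your induction produces $[\omega]^{-n}h\,[\omega]^{n}=h+n\langle\omega,h\rangle k$, which is clearly the intended statement: the printed (1) omits the factor $n$ (a typo, since as stated it fails already for $n=0$, and the factor $n$ is precisely what your bookkeeping for (2) consumes to turn the accumulated shifts into $n\,\omega+\tfrac{n^2}{2}\langle\omega,\omega\rangle k$).
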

\begin{proof} As it was already said, we can view $[\omega]$ as the
identity map on $L(\Lambda)\to L(\Lambda)$, where the target space
is given a new module structure $L(\Lambda')$ with a vertex operator
\begin{equation}\label{E:deformed vertex operators}
Y_{L(\Lambda')}(\cdot,z)=Y_{L(\Lambda)}(\Delta(\omega,z)\cdot,z),\
\Delta(\omega,z)=z\sp\omega  \exp -\biggl(\sum_{n>0} \omega (n)
(-z)^{- n}\big/ n\biggr).
\end{equation}
Then $L(0)[\omega]$ is the coefficient of $z\sp{-2}$ in the vertex
operator
$$
Y_{L(\Lambda')}(L(-2)\bold
1,z)=Y_{L(\Lambda)}\big(\Delta(\omega,z)\,L(-2)\bold 1,z\big),
$$
and only three terms in
$$
\Delta(\omega,z)=1-\left(\omega (1) (-z)^{- 1}+\omega (2) (-z)^{-
2}\big/ 2+\dots \right)+\tfrac{1}{2!}\left(\omega (1) (-z)^{-
1}+\dots\right)\sp2+\dots
$$
give a contribution to this coefficient
$$
L(0)[\omega]=[\omega]\,\big(L(0)+\omega(0)+\tfrac{1}{2}\langle\omega,
\omega\rangle k\big).
$$
Now (2) follows by induction. Relation (1) is proved in a similar
way.
\end{proof}
\begin{remark}
In the proofs of Lemmas~\ref{L:simple current initial conditions for
0 and 1}, \ref{L:simple current initial conditions for 2} and
\ref{L: degree for level k omega}(1) we suggested the use of
deformed vertex operators \eqref{E:deformed vertex operators}, but
all these statements can be proved by using formula
\eqref{commutation of omega and x alpha}) as well.

On the other side, the formula in Lemma~\ref{L: degree for level k
omega}(2) written for operator $d$,
$$
-d\,[\omega]\sp{n}=[\omega]\sp{n}
\left(-d+C_{\Lambda}-C_{\Lambda'}+n\,\omega(0)+\tfrac{n\sp2}{2}\langle\omega,
\omega\rangle k\right)\quad\text{if}\quad [\omega]\sp{n}v_\Lambda\in
L(\Lambda'),
$$
contains a term $C_{\Lambda}-C_{\Lambda'}$ which in general depends
on $L(0)$. When $\Lambda'=\Lambda$ the power $[\omega]\sp{n}$ is a
Weyl group translation operator on $L(\Lambda)$ (cf.
Lemma~\ref{L:Weyl translation and simple current operator} and
Remark~\ref{R:omega is a 4th root of e}) and a formula for $d$
follows from \eqref{E:Weyl translations normalize root vectors}.
\end{remark}

\section{Coefficients of level $1$ intertwining operators}

Let $V$ be a vertex operator algebra and let $W_1$, $W_2$ and $W_3$
be three $V$-modules. Then an intertwining operator $\mathcal Y$ of
type $\binom{W_3}{W_1\,W_2}$ is a formal series
$$
\mathcal Y(w,z)=\sum_{n\in \mathbb Q}w_nz\sp{-n-1}, \quad w\in W_1,
$$
with coefficients
$$
 w_n\in \text{Hom\,}(W_2,W_3)\quad\text{for}\quad n\in\mathbb Q
$$
such that ``all the defining properties of a module action that make
sense hold'' (see \cite{FHL}). In particular, for $v\in V$ we have a
commutator formula
$$
v_jw_n-w_nv_j=\sum_{i\geq 0}\binom{j}{i}(v_iw)_{n+j-i},
$$
where $v_j$ in $v_jw_n$ is a coefficient of the vertex operator
$Y_{W_3}(v,z)=\sum v_jz\sp{-j-1}$ for $V$-module $W_3$, $v_j$ in
$w_nv_j$ is a coefficient of the vertex operator $Y_{W_2}(v,z)=\sum
v_jz\sp{-j-1}$ for $V$-module $W_2$ and $v_i$ in $v_iw$ is a
coefficient of the vertex operator $Y_{W_1}(v,z)=\sum v_iz\sp{-i-1}$
for $V$-module $W_1$. The vector space of all intertwining operators
of type $\binom{W_3}{W_1\,W_2}$ is denoted by
$I\binom{W_3}{W_1\,W_2}$ and its dimension is called a fusion rule.
We have
\begin{equation*}
 I\binom{W_3}{W_1\,W_2}\cong I\binom{W_3}{W_2\,W_1}\cong
I\binom{W_2'}{W_1\,W_3'},
\end{equation*}
where for $V$-module $M$ we denote by $M'$ the contragredient module
(see \cite{FHL}). If $W_1$ is an irreducible $V$-module, $W_2$ a
simple current module and
$$
W_3=W_1\boxtimes W_2
$$
then by Lemma~2.3 in \cite{L1} the fusion
$$
\dim I\binom{W_3}{W_1\,W_2}=1.
$$

\begin{lemma}\label{L:coefficients of intertwining operators}
Let $\hat{\mathfrak g}$ be an affine Lie algebra and $L(k\Lambda_0)$
a vacuum level $k$ standard $\hat{\mathfrak g}$-module. Let $V_1$,
$V_2$ and $V_3$ be irreducible modules for vertex operator algebra
$V=L(k\Lambda_0)$. Let $\mathcal Y\neq 0$ be an intertwining
operator of type $\binom{V_3}{V_1\,V_2}$, let $W$ be the top of
$V_1$ and $v\neq 0$ a vector on the top of $V_2$. Then there is
$m\in \mathbb Q$ such that the top of $V_3$ is a $\mathfrak
g$-module
$$
U(\mathfrak g)\{w_m v\mid w\in W\}.
$$
\end{lemma}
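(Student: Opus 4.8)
The plan is to single out $m$ by a conformal-weight count, to check that the corresponding coefficient lands in the top of $V_3$, and then to split the statement into an equivariance part and a nonvanishing part. Let $h_1,h_2,h_3$ be the lowest $L(0)$-eigenvalues of $V_1,V_2,V_3$, so that the top of $V_i$ is the $h_i$-eigenspace of $L(0)$ and is an irreducible $\mathfrak g$-module. I would use the $L(0)$-grading property $[L(0),\mathcal Y(w,z)]=\mathcal Y(L(0)w,z)+z\tfrac{d}{dz}\mathcal Y(w,z)$, which for $w\in W$ (so $L(0)w=h_1w$) and $v$ in the top of $V_2$ (so $L(0)v=h_2v$) gives $[L(0),w_n]=(h_1-n-1)w_n$, whence $w_nv$ lies in the $(h_1+h_2-n-1)$-eigenspace of $L(0)$ on $V_3$. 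Since $h_3$ is the smallest such eigenvalue, this forces $w_nv=0$ for $n>m$ and $w_mv\in$ top of $V_3$, where
$$m:=h_1+h_2-h_3-1.$$
This is the value of $m$ I would take.

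Next I would establish $\mathfrak g$-equivariance from the commutator formula. Taking the algebra element $x\in\mathfrak g$, realized as the weight-one vector $x(-1)\mathbf 1\in V$ with modes $(x(-1)\mathbf 1)_j=x(j)$, and $j=0$, the formula collapses to $x(0)w_n-w_nx(0)=(x(0)w)_n$ because $\binom{0}{i}=0$ for $i\ge1$. Thus $x(w_mv)=(xw)_mv+w_m(xv)$, so $w\mapsto w_mv$ is compatible with the diagonal $\mathfrak g$-action. To handle the fixed vector $v$, I would consider the subspace $S=\{u\in\text{top}(V_2)\mid w_mu=0\text{ for all }w\in W\}$; the displayed identity shows $w_m(xu)=x(w_mu)-(xw)_mu=0$ whenever $u\in S$ (using $xw\in W$), so $S$ is a $\mathfrak g$-submodule of the irreducible top of $V_2$. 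Hence, provided the leading map $w\otimes u\mapsto w_mu$ is not identically zero, $S=0$, and in particular $w_mv\ne0$ for some $w\in W$. Then $U(\mathfrak g)\{w_mv\mid w\in W\}$ is a nonzero $\mathfrak g$-submodule of the irreducible top of $V_3$, so it equals the whole top, which is the assertion.

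The main obstacle is therefore the nonvanishing of the leading top-to-top map $\Phi\colon w\otimes u\mapsto w_mu$, and here I would invoke irreducibility of all three modules. The span of all coefficients $\{w_nu\mid w\in V_1,\ u\in V_2,\ n\in\mathbb Q\}$ is $\tilde{\mathfrak g}$-stable, by the commutator formula applied to the modes $x(p)$, and it is nonzero since $\mathcal Y\ne0$; hence it equals $V_3$, and in particular it meets the minimal-weight space. On the other hand, using the commutator formula to strip creation modes $x(-p)$ off $u$ and the iterate (associativity) formula to strip them off $w$, one sees that any contribution landing in the minimal $L(0)$-eigenspace of $V_3$ must come, after reduction, from fully reduced coefficients $w_mu$ with $w,u$ in the tops, since the stripped creation operators only raise the $L(0)$-weight and cannot reach the minimal eigenspace otherwise. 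Thus $\Phi\equiv0$ would prevent any coefficient from meeting that eigenspace, contradicting that the coefficients span $V_3$. This reduction is the one delicate point; everything else is grading bookkeeping together with the two irreducibility arguments above.
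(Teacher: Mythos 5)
Your proof is correct, but at the one genuinely nontrivial point it takes a different route from the paper. The paper does not pin down $m$ by the $L(0)$-count $m=h_1+h_2-h_3-1$; instead it quotes Proposition~11.9 of \cite{DL} to get $\mathcal Y(w,z)v\neq0$ for $w\neq0$, defines $m$ as the maximal $n$ with $w_nv\neq0$ for some $w\in W$ (so nonvanishing is built in), runs your $j=0$ commutator computation together with the $j>0$ case $x_j(w_mv)=(x_0w)_{m+j}v=0$ (maximality of $m$), and then concludes in one stroke from irreducibility of $V_3$ as a $\tilde{\mathfrak g}$-module: $U(\tilde{\mathfrak g}_{\leq0})\{w_mv\mid w\in W\}$ is $\tilde{\mathfrak g}$-stable and nonzero, hence all of $V_3$, which forces $\{w_mv\}$ into the top and gives the statement --- irreducibility of the tops of $V_2$ and $V_3$ as $\mathfrak g$-modules is never invoked, and the two values of $m$ agree only a posteriori. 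Your substitute for the citation --- the span of all coefficients is $\tilde{\mathfrak g}$-stable and graded, hence equals $V_3$ and meets the weight-$h_3$ space; then strip creation modes off $u$ via the commutator formula and off $w$ via the iterate formula --- is sound, and the mechanism you flag works exactly as you say: in $w_nx(-p)u'=x(-p)w_nu'-\sum_{i\geq0}\binom{-p}{i}(x_iw)_{n-p-i}u'$ the first term dies because $w_nu'$ would have weight $h_3-p<h_3$, and in the iterate expansion of $(x(-p)w)_nu$ only the term $w_{n-p}x(0)u$ survives for the same weight reason, so induction on the depths of $u$ and $w$ reduces every top-landing coefficient to $\Phi$. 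What your approach buys is self-containedness: you avoid the Dong--Lepowsky nondegeneracy result entirely, at the price of the delicate double reduction (whose full write-up needs the iterate formula for intertwining operators from the Jacobi identity of \cite{FHL}) and of the extra input that the tops of standard modules are irreducible $\mathfrak g$-modules; the paper's argument buys brevity and uses only irreducibility of $V_3$ at the affine level.
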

\begin{proof} By Proposition~11.9 in \cite{DL} we have $\mathcal
Y(w,z)v\neq0$ for $w\neq0$ and, from the definition of intertwining
operators, $w_n v=0$ for all $n$ large enough. Let
$$
m=\max\,\{n\in\mathbb Q\mid w_nv\neq 0\ \text{for some}\  w\in W\}.
$$
Then we have a nonzero subspace
$$
\{w_m v\mid w\in W\}\subset V_3.
$$
For $x_j=x(j)$ in $\hat{\mathfrak g}$ we have a commutator formula
$$
x_jw_m-w_mx_j=\sum_{i\geq 0}\binom{j}{i}(x_iw)_{m+j-i}
$$
which for $j>0$ implies
$$
x_j(w_mv)=w_mx_jv+\sum_{i\geq
0}\binom{j}{i}(x_iw)_{m+j-i}v=(x_0w)_{m+j}v=0
$$
because $v$ and $w$ are vectors on the top of modules and $m$ is
maximal such that $w_n v$ can be nonzero. Since
$$
U(\hat{\mathfrak g}_{\leq0})\{w_m v\mid w\in W\}\subset V_3
$$
is an $\hat{\mathfrak g}$-invariant subspace of irreducible
$\hat{\mathfrak g}$-module $V_3$, the space $\{w_m v\mid w\in W\}$
must be a subspace of the top of $V_3$ and the lemma follows.
\end{proof}

Recall that we have fixed vectors $w_2=v_{\Lambda_2}$ and
$w_{\underline{2}}$ with weights $\omega_2$ and
$\omega_{\underline{2}}$ in the $4$-dimensional spinor $\mathfrak
g$-module on the top of $L(\Lambda_{2})$.

\begin{proposition}\label{P:coefficients of intertwining operators}
(1) With proper scalars $\lambda$ and $\mu$ and an intertwining
operator $\mathcal Y$ of type
\begin{equation*}
\binom{L(\Lambda_2)}{L(\Lambda_2)\quad L(\Lambda_0)}
\end{equation*}
there are coefficients \medskip

$[\omega_{2}]$\ of\quad $\mathcal Y(\lambda w_2,z)=\sum_{n\in
\mathbb Q}(\lambda w_2)_nz\sp{-n-1}$,\qquad $[\omega_{2}]\colon
L(\Lambda_0)\to L(\Lambda_{2})$,

$[\omega_{\underline{2}}]$\ of\quad $\mathcal Y(\mu
w_{\underline{2}},z)=\sum_{n\in \mathbb Q}(\mu
w_{\underline{2}})_nz\sp{-n-1}$,\qquad
$[\omega_{\underline{2}}]\colon L(\Lambda_0)\to L(\Lambda_{2})$,
\medskip

\noindent which commute with the action of $\hat{\mathfrak g}_1$ and
such that
$$
[\omega_{2}]v_{\Lambda_0}=v_{\Lambda_2},\qquad
[\omega_{\underline{2}}]v_{\Lambda_0}=w_{\underline{2}}.
$$
\medskip

(2) With proper scalars $\lambda$ and $\mu$ and an intertwining
operator $\mathcal Y$ of type
\begin{equation*}
\binom{L(\Lambda_1)}{L(\Lambda_2)\quad L(\Lambda_2)}
\end{equation*}
there are coefficients \medskip

$[\omega_{2}]$\ of\quad $\mathcal Y(\lambda w_2,z)=\sum_{n\in
\mathbb Q}(\lambda w_2)_nz\sp{-n-1}$,\qquad $[\omega_{2}]\colon
L(\Lambda_2)\to L(\Lambda_{1})$,

$[\omega_{\underline{2}}]$\ of\quad $\mathcal Y(\mu
w_{\underline{2}},z)=\sum_{n\in \mathbb Q}(\mu
w_{\underline{2}})_nz\sp{-n-1}$,\qquad
$[\omega_{\underline{2}}]\colon L(\Lambda_2)\to L(\Lambda_{1})$,
\medskip

\noindent which commute with the action of $\hat{\mathfrak g}_1$ and
such that
$$
[\omega_{2}]v_{\Lambda_{2}}=0,\quad
[\omega_{2}]w_{\underline{2}}=v_{\Lambda_{1}},\qquad
[\omega_{\underline{2}}]v_{\Lambda_2}=v_{\Lambda_1},\quad
[\omega_{\underline{2}}]w_{\underline{2}}=0.
$$
\end{proposition}

\begin{proof} Since $L(\Lambda_2)$ is $L(\Lambda_0)$-module we have
$$
I\binom{L(\Lambda_2)}{L(\Lambda_2)\quad L(\Lambda_0)}\cong
I\binom{L(\Lambda_2)}{L(\Lambda_0)\quad L(\Lambda_2)}
$$
and the space of intertwining operators of this type is
$1$-dimensional. Since $L(\Lambda_1)$ is a simple current module
such that
$$
L(\Lambda_1)\boxtimes L(\Lambda_2)=L(\Lambda_2)
$$ (see
\cite{L1}, \cite{L2} or \cite{DLM}), and since both $L(\Lambda_1)$
and $L(\Lambda_2)$ are self-dual, we have
$$
I\binom{L(\Lambda_1)}{L(\Lambda_2)\quad L(\Lambda_2)}\cong
I\binom{L(\Lambda_2)}{L(\Lambda_2)\quad L(\Lambda_1)}
$$
and the space of intertwining operators of this type is
$1$-dimensional.

Let $\mathcal Y\neq0$ be an intertwining operator of type
$\binom{L(\Lambda_2)}{L(\Lambda_2)\quad L(\Lambda_0)}$ and
$v=v_{\Lambda_0}$ on the top of $L(\Lambda_0)$. By
Lemma~\ref{L:coefficients of intertwining operators} there is a
vector $w$ on the top of $L(\Lambda_2)$ and an integer $m$ such that
$w_mv$ is proportional to $v_{\Lambda_2}$. It is clear that $w$ is
proportional to $v_{\Lambda_2}$ and we denote by $[\omega_2]=w_m$
the corresponding coefficient of the formal series $\mathcal
Y(v_{\Lambda_2},z)$. Obviously for proper normalization of $w$ we
have
$$
[\omega_2]v_{\Lambda_0}=v_{\Lambda_2}.
$$
On the other hand, if we take $w=w_{\underline{2}}$ and the
corresponding coefficient $[\omega_{\underline{2}}]=w_m$ of the
formal series $\mathcal Y(w_{\underline{2}},z)$, with proper
normalization we have
$$
[\omega_{\underline{2}}]v_{\Lambda_0}=w_{\underline{2}}.
$$

Now let $\mathcal Y\neq0$ be an intertwining operator of type
$\binom{L(\Lambda_1)}{L(\Lambda_2)\quad L(\Lambda_2)}$ and
$v=v_{\Lambda_2}$ on the top of $L(\Lambda_2)$. By
Lemma~\ref{L:coefficients of intertwining operators} there is a
vector $w$ on the top of $L(\Lambda_2)$ and an integer $m$ such that
vector $w_mv$ generates the irreducible $5$-dimensional $\mathfrak
g$-module on the top of $L(\Lambda_1)$. Since the top of
$L(\Lambda_2)$ is $4$-dimensional spinor $\mathfrak g$-module,
$\mathfrak h$-weight vectors of the form $w_mv$ can have weights
$$
\tfrac12(\varepsilon_1-\varepsilon_2)+\tfrac12(\varepsilon_1+\varepsilon_2),\quad
-\tfrac12(\varepsilon_1-\varepsilon_2)+\tfrac12(\varepsilon_1+\varepsilon_2),\quad
-\tfrac12(\varepsilon_1+\varepsilon_2)+\tfrac12(\varepsilon_1+\varepsilon_2).
$$
In the first case $w$ is proportional to $w_{\underline{2}}$ and
$w_mv=Cv_{\Lambda_1}$ for some scalar $C\neq0$. Vectors in the
second and third case can be transformed to the vector $w'_mv=C
v_{\Lambda_1}$ in the first case by acting with Lie algebra
$\mathfrak g$ elements $x_{\varepsilon_1-\varepsilon_2}$ and
$x_{\varepsilon_1}$ respectively. So if we take
$w=w_{\underline{2}}$ and the corresponding coefficient
$[\omega_{\underline{2}}]=w_m$ of the formal series $\mathcal
Y(w_{\underline{2}},z)$ with proper normalization, we have
$$
[\omega_{\underline{2}}]v_{\Lambda_2}=v_{1}.
$$
Inspection of $\mathfrak h$-weights in $5$-dimensional $\mathfrak
g$-module on the top of $L(\Lambda_1)$ shows that
$[\omega_{\underline{2}}]w_{\underline{2}}=0$. In a similar way we
see that for $w=v_{\Lambda_2}$ and the properly normalized
corresponding coefficient $[\omega_{2}]=w_m$ of the formal series
$\mathcal Y(v_{\Lambda_2},z)$ we have
$$
[\omega_{2}]w_{\underline{2}}=v_{\Lambda_1}\quad\text{and}\quad
[\omega_{2}]v_{\Lambda_2}=0.
$$

In each of the above cases $[\omega_{2}]$ and
$[\omega_{\underline{2}}]$ are coefficients of $\mathcal Y(w,z)$
with $w$ such that
$$
x(i)w=0\qquad\text{for all}\quad x\in\mathfrak g_1,\ i\geq 0.
$$
Hence the commutation relations for intertwining operators imply
$$
x(j)w_m-w_mx(j)=\sum_{i\geq
0}\binom{j}{i}(x(i)w)_{m+j-i}=0\quad\text{for all}\quad
x(j)\in\hat{\mathfrak g}_1.
$$
\end{proof}

\begin{remark}
In the Introduction we gave a very rough idea how coefficients of
intertwining operators are used in the proof of linear independence
of the monomial basis given by Theorem~\ref{T:the main theorem}:
with this operators we ``move'' monomial vectors
$x(\pi)v_{\Lambda}\to x(\pi)v_{\Lambda'}$ from one space to another
until we get vectors of the form $x(\pi')[\omega]v_{\Lambda''}$.
Since these operators commute with all $x(\pi)$, the only thing that
matters is how these operators ``move'' the highest weight vectors
$v_{\Lambda}\to v_{\Lambda'}$, in our case it is
$$
\begin{aligned}
&v_{\Lambda_0}\overset{[\omega_2]}\longrightarrow
v_{\Lambda_{2}}\overset{[\omega_{\underline{2}}]}\longrightarrow
v_{\Lambda_{1}},\quad [\omega_{\underline{2}}]w_{\underline{2}}=0,\\
&v_{\Lambda_0}\overset{[\omega_{\underline{2}}]}\longrightarrow
w_{\underline{2}}\overset{[\omega_2]}\longrightarrow
v_{\Lambda_{1}},\quad [\omega_{2}]v_{\Lambda_{2}}=0.
\end{aligned}
$$
For this reason it is convenient to use for different operators the
same symbol $[\omega_{2}]$ which reminds us only that they are
obtained as some coefficients of different series $\mathcal
Y(w_2,z)$ associated with the ``same'' vector $w_2$, or, to be
precise, associated with the same weight subspace of weight
$\omega_2$ of the top of $L(\Lambda_2)$.
\end{remark}

\section{Proof of linear independence}

By Lemma~\ref{L:spanning for W(Lambda)} the set of monomial vectors
$$
x(\pi)v_\Lambda=\ \dots\,
x_{\underline{2}}(-j)\sp{c_j}x_{0}(-j)\sp{b_j}
x_{2}(-j)\sp{a_j}\dots\,x_{\underline{2}}(-1)\sp{c_1}x_{0}(-1)\sp{b_1}
x_{2}(-1)\sp{a_1}v_\Lambda
$$
satisfying difference conditions \eqref{E:difference conditions} and
initial conditions \eqref{E:initial conditions} spans $W(\Lambda)$.
We prove linear independence of this set by induction on degree
$$
-n=\sum_{\gamma\in\Gamma, \, j\geq
1}-j\cdot\pi(x_\gamma(-j))=-\left(1a_1+1b_1+1c_1+\dots+ja_j+jb_j+jc_j+\dots\right)
$$
of monomials $x(\pi)$, considering in a proof all level $k$ modules
simultaneously. In the proof we shall briefly write DC for
difference conditions \eqref{E:difference conditions} and IC for
initial conditions \eqref{E:initial conditions}.
\bigskip

\noindent {\bf Step 1.}\ The idea of proof is illustrated most
clearly in a proof of linear independence of vectors
$$
x(\pi)v_{k\Lambda_1}
$$
of degree $-n$. As induction hypothesis we assume that vectors
$x(\mu)v_{k\Lambda_0}$ of degree $>-n$ are linearly independent.
Assume that
\begin{equation}\label{E:induction step 1}
\sum c_\pi x(\pi)v_{k\Lambda_1}=0.
\end{equation}
By Lemma~\ref{L:simple current initial conditions for 0 and 1} we
have $v_{\Lambda_1}=[\omega]v_{\Lambda_0}$ and hence
$v_{k\Lambda_1}=[\omega]v_{k\Lambda_0}$. By Lemma~\ref{L:commutation
of omega and monomials}
$$
\sum c_\pi x(\pi)v_{k\Lambda_1}=\sum c_\pi
x(\pi)[\omega]v_{k\Lambda_0}=[\omega]\sum c_\pi
x(\pi\sp+)v_{k\Lambda_0}
$$
and injectivity of $[\omega]$ implies
\begin{equation}\label{E:induction step 2}
\sum c_\pi x(\pi\sp+)v_{k\Lambda_0}=0.
\end{equation}
Monomials $x(\pi)$ in \eqref{E:induction step 1} satisfy difference
conditions, so obviously ``shifted by degree'' monomials
$x(\pi\sp+)$ in \eqref{E:induction step 2} satisfy difference
conditions as well. Monomials $x(\pi)$ in \eqref{E:induction step 1}
satisfy initial conditions for $k\Lambda_1$, i.e., contain no part
of the form $x_\alpha(-1)$. But then monomials $x(\pi\sp+)$ in
\eqref{E:induction step 2} contain parts of the form $x_\alpha(-j)$,
$j\geq 1$, and hence satisfy initial conditions for $k\Lambda_0$.
Since monomials $x(\pi\sp+)$ in \eqref{E:induction step 2} have
degrees $>-n$, the induction hypothesis implies that all $c_\pi=0$.
Hence we proved linear independence of monomial basis vectors for
$W(k\Lambda_1)$ of degree $-n$.
\bigskip

\noindent {\bf Step 2.}\ For ${\mathcal A}=(c_1,b_1,a_1)$ write
$$
x(-1)\sp{\mathcal A}=x_{\underline{2}}(-1)\sp{c_1}x_{0}(-1)\sp{b_1}
x_{2}(-1)\sp{a_1}.
$$
Later on it will be convenient to write a monomial $x(\mu)$ as a
product
$$
 \dots\, x_{\underline{2}}(-j)\sp{c_j}x_{0}(-j)\sp{b_j}
x_{2}(-j)\sp{a_j}\dots\,x_{\underline{2}}(-1)\sp{c_1}x_{0}(-1)\sp{b_1}
x_{2}(-1)\sp{a_1}=x(\mu_2)x(-1)\sp{\mathcal A_\mu}.
$$
We define a partial order on the set of level $k$ integral dominant
weights:
$$
\Lambda'=k'_0\Lambda_0+k'_1\Lambda_1+k'_2\Lambda_2\leq
\Lambda=k_0\Lambda_0+k_1\Lambda_1+k_2\Lambda_2
$$
if and only if
$$
\begin{aligned}
k'_0&\leq k_0, \\ k'_0+k'_2&\leq k_0+k_2.
\end{aligned}
$$
Clearly $k\Lambda_1$ is the smallest element and $k\Lambda_0$ is the
largest element in the set of level $k$ integral dominant weights.

Now we proceed with a proof of linear independence. We assume that
vectors $x(\mu)v_{\Lambda'}$ of degree $\geq-n$ satisfying DC and IC
are linearly independent for some set of $\Lambda'\geq k\Lambda_1$.
Let $\Lambda$ be a minimal level $k$ integral weight for which we
need to prove linear independence of monomial vectors of degree
$\geq-n$ satisfying DC and IC. Let
\begin{equation}\label{E:induction step 2'}
\sum c_\pi x(\pi)v_{\Lambda}=0.
\end{equation}
Assume that $c_\mu\neq 0$ for some $x(\mu)=x(\mu_2)x(-1)\sp{\mathcal
A_\mu}$ for
$$
\mathcal A_\mu=(c_1,b_1,a_1),\qquad a_1<k_0,
$$
and we assume that $a_1$ is the smallest power of $x_2(-1)$
appearing in such $A_\mu$. Since $[\omega_2]\colon L(\Lambda_0)\to
L(\Lambda_2)$, we have the operator
$$
1\sp{\otimes a_1}\otimes\,[\omega_2]\sp{\otimes(k_0-a_1)}\otimes
1\sp{\otimes(k_2+k_1)}\colon L(\Lambda)\to L(\Lambda'),
$$
$$
v_{\Lambda_0}\sp{\otimes k_0}\otimes v_{\Lambda_2}\sp{\otimes
k_2}\otimes v_{\Lambda_{1}}\sp{\otimes k_1} \mapsto
v_{\Lambda_0}\sp{\otimes a_1}\otimes v_{\Lambda_2}\sp{\otimes
(k_2+k_0-a_1)}\otimes v_{\Lambda_{1}}\sp{\otimes k_1}
$$
\smallskip

\noindent which commutes with the action of $\hat{\mathfrak g}_1$.
Note that $\Lambda> \Lambda'$ so that we may use the induction
hypothesis for corresponding monomial vectors. If we apply this
operator on the sum \eqref{E:induction step 2'} we get
\begin{equation}\label{E:induction step 2'1}
\sum c_\pi x(\pi)v_{\Lambda'}=0.
\end{equation}
By Lemmas~\ref{L:initial conditions for W(Lambda1)}, \ref{L:initial
conditions for W(Lambda2)}, \ref{L:initial conditions for
W(Lambda0)} we have $x_{2}(-1)v_{\Lambda_2}=0$,
$x_{2}(-1)v_{\Lambda_1}=0$, $x_2(-1)\sp2v_{\Lambda_0}=0$, so for any
monomial $x(\pi)= x(\pi') x_{2}(-1)\sp a$ with $a>a_1$ we have
$$
x(\pi)v_{\Lambda'}= x(\pi') x_{2}(-1)\sp a
\left(v_{\Lambda_0}\sp{\otimes a_1}\otimes v_{\Lambda_2}\sp{\otimes
(k_2+k_0-a_1)}\otimes v_{\Lambda_{1}}\sp{\otimes k_1}\right)=0.
$$
On the other hand, vectors like $x(\mu)v_{\Lambda'}$ besides DC
satisfy IC as well, i.e.,
$$
a_1\leq k'_0=a_1,\quad b_1+a_1\leq k'_0+k'_2=k_0+k_2,\quad
c_1+b_1\leq k'_0+k'_2=k_0+k_2,
$$
so by induction hypothesis the coefficient $c_\mu$ in linear
combination \eqref{E:induction step 2'1} must be zero --- a
contradiction.

So in \eqref{E:induction step 2'} we need to consider only monomials
with $a_1=k_0$, i.e., monomials of the form
$$
x(\pi)=  x(\pi') x_2(-1)\sp{k_0}.
$$
Assume that $c_\mu\neq 0$ for some $x(\mu)=x(\mu_2)x(-1)\sp{\mathcal
A_\mu}$ for
$$
\mathcal A_\mu=(c_1,b_1,a_1),\qquad a_1=k_0, \quad b_1+a_1<k_0+k_2,
\quad c_1+b_1<k_0+k_2.
$$
Since $[\omega_2]\colon L(\Lambda_2)\to L(\Lambda_1)$, we have the
operator
$$
1\sp{\otimes(k_0+k_2-1)}\otimes\,[\omega_2]\otimes 1\sp{\otimes
k_1}\colon L(\Lambda)\to L(\Lambda'),
$$
$$
v_{\Lambda_0}\sp{\otimes k_0}\otimes v_{\Lambda_2}\sp{\otimes
k_2}\otimes v_{\Lambda_{1}}\sp{\otimes k_1} \mapsto
v_{\Lambda_0}\sp{\otimes k_0}\otimes v_{\Lambda_2}\sp{\otimes
(k_2-1)}\otimes v_{\Lambda_{1}}\sp{\otimes (k_1+1)}
$$
\smallskip

\noindent which commutes with the action of $\hat{\mathfrak g}_1$.
Note that $\Lambda> \Lambda'$ so that we may use the induction
hypothesis for corresponding monomial vectors. If we apply this
operator on the sum \eqref{E:induction step 2'} we get
\begin{equation}\label{E:induction step 2''}
\sum c_\pi x(\pi)v_{\Lambda'}=0.
\end{equation}
By Lemmas~\ref{L:initial conditions for W(Lambda1)}, \ref{L:initial
conditions for W(Lambda2)}, \ref{L:initial conditions for
W(Lambda0)} we have $x_{2}(-1)v_{\Lambda_2}=0$,
$x_{2}(-1)v_{\Lambda_1}=0$, $x_2(-1)\sp2v_{\Lambda_0}=0$, so for any
monomial $x(\pi)= x(\pi') x_{2}(-1)\sp{k_0}$ we have
$$
\begin{aligned}
x(\pi)v_{\Lambda'}&= x(\pi') x_{2}(-1)\sp{k_0}
\left(v_{\Lambda_0}\sp{\otimes k_0}\otimes v_{\Lambda_2}\sp{\otimes
(k_2-1)}\otimes v_{\Lambda_{1}}\sp{\otimes (k_1+1)}\right) \\
&= C\,x(\pi') \left((x_{2}(-1)v_{\Lambda_0})\sp{\otimes k_0}\otimes
v_{\Lambda_2}\sp{\otimes (k_2-1)}\otimes v_{\Lambda_{1}}\sp{\otimes
(k_1+1)}\right)
\end{aligned}
$$
for some $C\neq0$. If for such $x(\pi)=x(\pi_2)x(-1)\sp{\mathcal
A_\pi}$ we have
$$
 b_1+a_1=k_0+k_2 \quad\text{or}\quad c_1+b_1=k_0+k_2,
$$
then by Lemma~\ref{L:initial conditions for W(Lambda2)} we have
$x_{\underline{2}}(-1)\sp2 v_{\Lambda_2}=
x_{\underline{2}}(-1)x_{0}(-1) v_{\Lambda_2}=x_{0}(-1)\sp2
v_{\Lambda_2}=0$ and by Lemma~\ref{L:initial conditions for
W(Lambda0)} we have $x_{\underline{2}}(-1)x_{0}(-1)
v_{\Lambda_0}=0$, so in either case at least one of $x_{0}(-1)$ or
$x_{\underline{2}}(-1)$ must act on one copy of $v_{\Lambda_1}$.
Hence by Lemma~\ref{L:initial conditions for W(Lambda1)} for such
$x(\pi)$ it must be
$$
x(\pi)v_{\Lambda'}=0.
$$
So in \eqref{E:induction step 2''} we have only vectors like
$x(\mu)v_{\Lambda'}$ which besides DC satisfy IC as well, and by
induction hypothesis the coefficient $c_\mu$ in linear combination
\eqref{E:induction step 2''} must be zero --- a contradiction.
\smallskip

\begin{remark}\label{R:omega on symm.algebra} For the rest of the proof
it will be convenient to realize
$L(\Lambda)$ of level $k$ in a $k\sp\text{th}$ component of a
symmetric algebra
$$
L(\Lambda)\subset S\sp k(V),\qquad V=L(\Lambda_0)\oplus
L(\Lambda_1)\oplus L(\Lambda_2).
$$
Operator $[\omega]=S([\omega])$ acts as ``a group element'' on
$S(V)$. On the other hand, operators $A$ and $B$ on $V$ in
Lemmas~\ref{L:proof of independence step 3A} and \ref{L:proof of
independence step 3B} below act as derivations on $S(V)$.
\end{remark}

\noindent {\bf Step 3.}\ By the previous step in the linear
combination \eqref{E:induction step 2'} we need to consider only
monomials $x(\pi)=x(\pi_2)x(-1)\sp{\mathcal A_\pi}$ with $a_1=k_0$
for  $\mathcal A_\pi=(c_1,b_1,a_1)$ and
$$
 b_1+a_1=k_0+k_2 \quad\text{or}\quad c_1+b_1=k_0+k_2.
$$
Assume first we have a monomial vector $x(\pi)v_\Lambda$ such that
$a_1=k_0$ and $b_1+a_1=k_0+k_2$ and  $c_1+b_1\leq k_0+k_2$. This
implies that
\begin{equation}\label{E:x(-1) in step 3A}
a_1=k_0,\quad b_1=k_2,\quad c_1\leq k_0.
\end{equation}
As above, Lemmas~\ref{L:initial conditions for W(Lambda1)},
\ref{L:initial conditions for W(Lambda2)} and \ref{L:initial
conditions for W(Lambda0)} imply that
$$
\begin{aligned}
 x(-1)\sp{\mathcal A_\pi}v_\Lambda
&=x_{\underline
2}(-1)\sp{c_1}x_0(-1)\sp{k_2}x_2(-1)\sp{k_0}\left(v_{\Lambda_{1}}\sp{
k_1} v_{\Lambda_{2}}\sp{ k_2}
v_{\Lambda_{0}}\sp{ k_0}\right)\\
&=C\,x_{\underline 2}(-1)\sp{c_1}\left(v_{\Lambda_{1}}\sp{ k_1}
(x_0(-1)v_{\Lambda_{2}})\sp{ k_2}
(x_2(-1)v_{\Lambda_{0}})\sp{ k_0}\right)\\
&=C'\,v_{\Lambda_{1}}\sp{ k_1} (x_0(-1)v_{\Lambda_{2}})\sp{ k_2}
(x_2(-1)v_{\Lambda_{0}})\sp{ k_0-c_1} (x_{\underline
2}(-1)x_2(-1)v_{\Lambda_{0}})\sp{ c_1}.
\end{aligned}
$$
Let $A\colon V\to V$ be a linear operator
$$
A|_{L(\Lambda_{0})}=[\omega_{\underline 2}]\colon L(\Lambda_{0})\to
L(\Lambda_{2}),\qquad A|_{L(\Lambda_{1})\oplus L(\Lambda_{2})}=0,
$$
and let $A$ act as a derivation on $S(V)$. By
Proposition~\ref{P:coefficients of intertwining operators}
derivation $A$ commutes with the action of $\hat{\mathfrak g}_1$ on
the symmetric algebra $S(V)$. Note that
$Av_{\Lambda_{0}}=[\omega_{\underline
2}]v_{\Lambda_{0}}=w_{\underline 2}$ by
Proposition~\ref{P:coefficients of intertwining operators} and
$x_{\underline 2}(-1)w_{\underline 2}=0$ by Lemma~\ref{L:initial
conditions for W(Lambda2)}, so $A(x_{\underline
2}(-1)x_2(-1)v_{\Lambda_{0}})=0$. Hence by Lemmas~\ref{L:simple
current initial conditions for 0 and 1} and \ref{L:simple current
initial conditions for 2} we have
$$
\begin{aligned}
& \ A\sp{k_0-c_1} x(-1)\sp{\mathcal A_\pi}v_\Lambda\\
&=C''\,v_{\Lambda_{1}}\sp{ k_1} (x_0(-1)v_{\Lambda_{2}})\sp{ k_2}
(x_2(-1)w_{\underline 2})\sp{ k_0-c_1}
(x_{\underline 2}(-1)x_2(-1)v_{\Lambda_{0}})\sp{ c_1}\\
&=C''\,([\omega]v_{\Lambda_{0}})\sp{ k_1}
([\omega]v_{\Lambda_{2}})\sp{ k_2} ([\omega]v_{\Lambda_{2}})\sp{
k_0-c_1}
([\omega]v_{\Lambda_{1}})\sp{ c_1}\\
&=C''\,[\omega]\left(v_{\Lambda_{0}}\sp{ k_1} v_{\Lambda_{2}}\sp{
k_2} v_{\Lambda_{2}}\sp{
k_0-c_1} v_{\Lambda_{1}}\sp{ c_1}\right)\\
&=C''\,[\omega]\left(v_{\Lambda_{0}}\sp{ k_1} v_{\Lambda_{2}}\sp{
k_2+k_0-c_1} v_{\Lambda_{1}}\sp{ c_1}\right).
\end{aligned}
$$
Since $A$ commutes with the action of $\hat{\mathfrak g}_1$ we have
$$
\begin{aligned}
&\ A\sp{k_0-c_1}x(\pi)v_\Lambda
=x(\pi_2)Ax(-1)\sp{\mathcal A_\pi}v_\Lambda\\
&=C''\,x(\pi_2)[\omega]\left(v_{\Lambda_{0}}\sp{ k_1}
v_{\Lambda_{2}}\sp{ k_2+k_0-c_1}
v_{\Lambda_{1}}\sp{ c_1}\right)\\
&=C''\,[\omega]x(\pi_2\sp+)\left(v_{\Lambda_{0}}\sp{ k_1}
v_{\Lambda_{2}}\sp{ k_2+k_0-c_1}
v_{\Lambda_{1}}\sp{ c_1}\right)\\
&=C''\,[\omega]x(\pi_2\sp+)v_{\Lambda'}
\end{aligned}
$$
for some $C''\neq0$. It is clear that ``truncated and shifted by
degree'' monomial $x(\pi_2\sp+)$ satisfies DC, and IC for
$x(\pi_2\sp+)v_{\Lambda'}$ read
$$
a_2\leq k_1=k-b_1-a_1,\quad b_2+a_2\leq k_1+k_2+k_0-c_1,\quad
c_2+b_2\leq k_1+k_2+k_0-c_1=k-c_1.
$$
But these are just three difference condition relations which hold
for $x(\pi)v_\Lambda$:
$$
a_2+b_1+a_1\leq k,\quad b_2+a_2+c_1\leq k,\quad c_2+b_2+c_1\leq k.
$$
Hence we have proved the following:
\begin{lemma}\label{L:proof of independence step 3A}
In the case when \eqref{E:x(-1) in step 3A} holds monomial vector
$$x(\pi_2\sp+)v_{\Lambda'}=(C''\,[\omega])\sp{-1}A\sp{k_0-c_1}x(\pi)v_\Lambda$$
satisfies difference conditions \eqref{E:difference conditions} and
initial conditions \eqref{E:initial conditions}.
\end{lemma}

Assume now that we have a monomial vector $x(\pi)v_\Lambda$ such
that $a_1=k_0$ and $b_1+a_1\leq k_0+k_2$ and  $c_1+b_1=k_0+k_2$.
This implies that
\begin{equation}\label{E:x(-1) in step 3B}
a_1=k_0,\quad b_1\leq k_2,\quad c_1+b_1= k_0+k_2.
\end{equation}
Like before, Lemmas~\ref{L:initial conditions for W(Lambda1)},
\ref{L:initial conditions for W(Lambda2)} and \ref{L:initial
conditions for W(Lambda0)} imply that
$$
\begin{aligned}
&\ x(-1)\sp{\mathcal A_\pi}v_\Lambda\\
&=x_{\underline
2}(-1)\sp{c_1}x_0(-1)\sp{b_1}x_2(-1)\sp{k_0}\left(v_{\Lambda_{1}}\sp{
k_1} v_{\Lambda_{2}}\sp{ k_2}
v_{\Lambda_{0}}\sp{ k_0}\right)\\
&=C\,x_{\underline 2}(-1)\sp{c_1}\left(v_{\Lambda_{1}}\sp{ k_1}
v_{\Lambda_{2}}\sp{ k_2-b_1} (x_0(-1)v_{\Lambda_{2}})\sp{ b_1}
(x_2(-1)v_{\Lambda_{0}})\sp{ k_0}\right)\\
&=C'\,v_{\Lambda_{1}}\sp{ k_1} (x_{\underline
2}(-1)v_{\Lambda_{2}})\sp{ k_2-b_1} (x_0(-1)v_{\Lambda_{2}})\sp{
b_1} (x_{\underline 2}(-1)x_2(-1)v_{\Lambda_{0}})\sp{ k_0}.
\end{aligned}
$$
By Lemmas~\ref{L:simple current initial conditions for 0 and 1} and
\ref{L:simple current initial conditions for 2} we further have
$$
\begin{aligned}
&\ x(-1)\sp{\mathcal A_\pi}v_\Lambda\\
&=C'\,([\omega]v_{\Lambda_{0}})\sp{ k_1} ([\omega]w_{\underline
2})\sp{ k_2-b_1} ([\omega]v_{\Lambda_{2}})\sp{ b_1}
([\omega]v_{\Lambda_{1}})\sp{ k_0}\\
&=C'\,[\omega]\left(v_{\Lambda_{0}}\sp{ k_1} w_{\underline 2}\sp{
k_2-b_1} v_{\Lambda_{2}}\sp{ b_1} v_{\Lambda_{1}}\sp{ k_0}\right).
\end{aligned}
$$
Hence we have
$$
\begin{aligned}
&\ x(\pi)v_\Lambda=x(\pi_2)x(-1)\sp{\mathcal A_\pi}v_\Lambda\\
&=C'\,x(\pi_2)[\omega]\left(v_{\Lambda_{0}}\sp{ k_1} w_{\underline
2}\sp{ k_2-b_1} v_{\Lambda_{2}}\sp{ b_1} v_{\Lambda_{1}}\sp{
k_0}\right)\\
&=C'\,[\omega]x(\pi_2\sp+)\left(v_{\Lambda_{0}}\sp{ k_1}
w_{\underline 2}\sp{ k_2-b_1} v_{\Lambda_{2}}\sp{ b_1}
v_{\Lambda_{1}}\sp{ k_0}\right).
\end{aligned}
$$
Let $B\colon V\to V$ be a linear operator
$$
B|_{L(\Lambda_{2})}=[\omega_{2}]\colon L(\Lambda_{2})\to
L(\Lambda_{1}),\qquad B|_{L(\Lambda_{0})\oplus L(\Lambda_{1})}=0,
$$
and let $B$ act as a derivation on $S(V)$. By
Proposition~\ref{P:coefficients of intertwining operators}
derivation $B$ commutes with the action of $\hat{\mathfrak g}_1$ on
the symmetric algebra $S(V)$ and $Bw_{\underline
2}=[\omega_{2}]w_{\underline 2}=v_{\Lambda_{1}}$ and
$Bv_{\Lambda_{2}}=[\omega_{2}]v_{\Lambda_{2}}=0$. Hence
$$
B\sp{k_2-b_1}\colon v_{\Lambda_{0}}\sp{ k_1} w_{\underline 2}\sp{
k_2-b_1} v_{\Lambda_{2}}\sp{ b_1} v_{\Lambda_{1}}\sp{ k_0}\mapsto
C''\,v_{\Lambda'}= C''\,v_{\Lambda_{0}}\sp{ k_1} v_{\Lambda_{1}}\sp{
k_0+k_2-b_1} v_{\Lambda_{2}}\sp{ b_1}.
$$
\begin{lemma}\label{L:proof of independence step 3B}
In the case when \eqref{E:x(-1) in step 3B} holds monomial vector
$$x(\pi_2\sp+)v_{\Lambda'}=B\sp{k_2-b_1}(C''C'\,[\omega])\sp{-1}x(\pi)v_\Lambda$$
satisfies difference conditions \eqref{E:difference conditions} and
initial conditions \eqref{E:initial conditions}.
\end{lemma}
\begin{proof}
It is clear that ``truncated and shifted by degree'' monomial
$x(\pi_2\sp+)$ satisfies DC, and IC for $x(\pi_2\sp+)v_{\Lambda'}$
read
$$
a_2\leq k_1=k-c_1-b_1,\quad b_2+a_2\leq k_1+b_1,\quad c_2+b_2\leq
k_1+b_1=k_1+k_0+k_2-c_1=k-c_1.
$$
But these are just three difference condition relations which hold
for $x(\pi)v_\Lambda$:
$$
a_2+c_1+b_1\leq k,\quad b_2+a_2+c_1\leq k,\quad c_2+b_2+c_1\leq k.
$$
\end{proof}

Now we proceed with the proof of linear independence. As already
noted, in the linear combination \eqref{E:induction step 2'} we need
to consider only
$$
\begin{aligned}
0&=\sum_{\begin{subarray}{c}a_1=k_0 \\b_1+a_1=k_0+k_2>c_1+b_1\\
\text{or}\\a_1=k_0 \\b_1+a_1\leq
k_0+k_2=c_1+b_1\end{subarray}}C_{\dots c_1b_1a_1}\dots x_{\underline
2}(-1)\sp{c_1}x_0(-1)\sp{b_1}x_2(-1)\sp{k_0}\left(v_{\Lambda_{1}}\sp{
k_1} v_{\Lambda_{2}}\sp{ k_2}
v_{\Lambda_{0}}\sp{ k_0}\right)\\
&=\sum_{\begin{subarray}{c}c_1<k_0\end{subarray}}C_{\dots
c_1k_2k_0}\dots x_{\underline
2}(-1)\sp{c_1}x_0(-1)\sp{k_2}x_2(-1)\sp{k_0}\left(v_{\Lambda_{1}}\sp{
k_1} v_{\Lambda_{2}}\sp{ k_2}
v_{\Lambda_{0}}\sp{ k_0}\right)\\
&+\sum_{\begin{subarray}{c}b_1\leq k_2\\
c_1+b_1=k_0+k_2\end{subarray}}C_{\dots c_1b_1k_0}\dots x_{\underline
2}(-1)\sp{c_1}x_0(-1)\sp{b_1}x_2(-1)\sp{k_0}\left(v_{\Lambda_{1}}\sp{
k_1} v_{\Lambda_{2}}\sp{ k_2} v_{\Lambda_{0}}\sp{ k_0}\right).
\end{aligned}
$$
Note that in the first sum we have vectors of the form
\begin{equation}\label{E:vectors in the sum A}
x(\pi_2)\left(v_{\Lambda_{1}}\sp{ k_1} (x_0(-1)v_{\Lambda_{2}})\sp{
k_2} (x_2(-1)v_{\Lambda_{0}})\sp{ k_0-c_1} (x_{\underline
2}(-1)x_2(-1)v_{\Lambda_{0}})\sp{ c_1}\right)
\end{equation}
for $k_0-c_1=1, \dots, k_0$, and that in the second sum we have
vectors of the form
\begin{equation}\label{E:vectors in the sum B}
x(\pi_2)\left(v_{\Lambda_{1}}\sp{ k_1} (x_{\underline
2}(-1)v_{\Lambda_{2}})\sp{ k_2-b_1} (x_0(-1)v_{\Lambda_{2}})\sp{
b_1} (x_{\underline 2}(-1)x_2(-1)v_{\Lambda_{0}})\sp{ k_0}\right)
\end{equation}
for $k_2-b_1=0, \dots, k_2$. In particular in \eqref{E:vectors in
the sum A} we see a factor
$$
(x_2(-1)v_{\Lambda_{0}})\sp{ k_0-c_1}(x_{\underline
2}(-1)x_2(-1)v_{\Lambda_{0}})\sp{ c_1}\quad\text{for}\quad
c_1=0,\dots, k_0-1
$$
and in \eqref{E:vectors in the sum B} we see a factor
$(x_{\underline 2}(-1)x_2(-1)v_{\Lambda_{0}})\sp{ k_0}$. Hence the
operator $A\sp{k_0}$ annihilate all these terms except ones with
$c_1=0$ and the action on linear combination \eqref{E:induction step
2'} gives
$$
0=A\sp{k_0}\sum c_\pi x(\pi)v_\Lambda=[\omega]\sum_{\mathcal
A_\pi=(0,k_2,k_0)}c_\pi C''\,x(\pi_2\sp+)v_{\Lambda'}.
$$
Now Lemma~\ref{L:proof of independence step 3A} and the induction
hypothesis imply that $c_\pi=0$ whenever $A_\pi=(0,k_2,k_0)$. In
turn this implies that in the first sum of \eqref{E:induction step
2'} it is enough to consider vectors \eqref{E:vectors in the sum A}
for $c_1=1,\dots, k_0-1$. Then we apply operator $A\sp{k_0-1}$ which
annihilate all these terms except ones with $c_1=1$ and the action
on linear combination \eqref{E:induction step 2'} gives
$$
0=A\sp{k_0}\sum c_\pi x(\pi)v_\Lambda=[\omega]\sum_{\mathcal
A_\pi=(1,k_2,k_0)}c_\pi C''\,x(\pi_2\sp+)v_{\Lambda'}.
$$
Now Lemma~\ref{L:proof of independence step 3A} and the induction
hypothesis imply that $c_\pi=0$ whenever $A_\pi=(1,k_2,k_0)$. By
proceeding in this way we see that all the coefficients
$c_\pi=C_{\dots c_1b_1a_1}$ for $c_1<k_0$ in the first sum are equal
to zero.

So we are left with the second sum
\begin{equation}\label{E:the second sum in step 3B}
\sum c_\pi x(\pi)v_\Lambda=
[\omega]\sum_{\begin{subarray}{c}b_1\leq k_2\\
c_1+b_1=k_0+k_2\end{subarray}}c_\pi
C'\,x(\pi_2\sp+)\left(v_{\Lambda_{0}}\sp{ k_1} w_{\underline 2}\sp{
k_2-b_1} v_{\Lambda_{2}}\sp{ b_1} v_{\Lambda_{1}}\sp{ k_0}\right)=0.
\end{equation}
This implies
\begin{equation}\label{E:last step in step 3B}
\sum_{\begin{subarray}{c}b_1\leq k_2\\
c_1+b_1=k_0+k_2\end{subarray}}c_\pi
C'\,x(\pi_2\sp+)\left(v_{\Lambda_{0}}\sp{ k_1} w_{\underline 2}\sp{
k_2-b_1} v_{\Lambda_{2}}\sp{ b_1} v_{\Lambda_{1}}\sp{ k_0}\right)=0.
\end{equation}
In \eqref{E:last step in step 3B} we see factors
$$
w_{\underline 2}\sp{ k_2-b_1} v_{\Lambda_{2}}\sp{
b_1}\qquad\text{for}\quad b_1=0,\dots, k_2.
$$
The operator $B\sp{k_0}$ will annihilate all these terms except ones
with $b_1=0$ and the action on linear combination \eqref{E:last step
in step 3B} gives
$$
\sum_{\begin{subarray}{c}b_1=0 \\
c_1+b_1=k_0+k_2\end{subarray}}c_\pi
C'C''\,x(\pi_2\sp+)\left(v_{\Lambda_{0}}\sp{ k_1}
 v_{\Lambda_{1}}\sp{ k_0+k_2-b_1}v_{\Lambda_{2}}\sp{ b_1}\right)=0.
$$
Now Lemma~\ref{L:proof of independence step 3B} and the induction
hypothesis imply that $c_\pi=0$ whenever $A_\pi=(k_0+k_2,0,k_0)$. In
turn this implies that in \eqref{E:last step in step 3B} it is
enough to consider vectors for $b_1=1,\dots, k_0$. So next we apply
$B\sp{k_0-1}$ and conclude that $c_\pi=0$ whenever
$A_\pi=(k_0+k_2-1,1,k_0)$. By proceeding in this way we see that all
the coefficients $c_\pi$ in the second sum \eqref{E:the second sum
in step 3B} are equal to zero and our proof of linear independence
is complete.

\section{Vertex operator formula}\label{S:Vertex operator formula}

For a root $\alpha$ we denote by $\alpha\sp\vee\in\mathfrak h$ a
dual root, $\alpha\sp\vee=2\alpha/\langle\alpha,\alpha\rangle$. In
this section for each $\alpha \in R$ we choose $x_\alpha \in
\mathfrak g_\alpha$ so that $[x_\alpha, x_{-\alpha}] = -\alpha^\vee$
and define on $L(\Lambda)$ a ``Weyl group translation'' operator
$e_\alpha$ by
$$
\begin{aligned}
&s_\alpha = \exp x_\alpha(0) \exp x_{-\alpha}(0) \exp x_\alpha(0), \\
&s_{\delta-\alpha} = \exp x_{-\alpha}(1)\exp x_\alpha(-1) \exp x_{-\alpha}(1),\\
&e_\alpha = s_{\delta-\alpha} s_\alpha.
\end{aligned}$$
Then on a standard $\hat{\mathfrak g}$-module $L(\Lambda)$ we have
\begin{equation}\label{E:Weyl translations normalize root
vectors}
\begin{aligned}
&e_\alpha c \, e_\alpha^{-1} = c , \\
&e_\alpha d\, e^{-1}_\alpha = d + \alpha^\vee - \tfrac 12 \langle
\alpha^\vee,
\alpha^\vee \rangle c , \\
&e_\alpha h \, e^{-1}_\alpha = h - \langle \alpha^\vee, h \rangle c ,\\
&e_\alpha h(j)\, e^{-1}_\alpha = h(j)\qquad \text{for}\  j \ne 0,\\
&e_\alpha x_\gamma (j)\, e^{-1}_\alpha = (-1)^{\gamma(\alpha^\vee)}
x_\gamma (j - \gamma (\alpha^{\vee}))
\end{aligned}
\end{equation}
for $h \in \mathfrak h$, $\gamma \in R$ and $j \in \Bbb Z$. These
formulas are a consequence of the adjoint action of the group
element $e_\alpha$ on $\hat{\mathfrak g}$. The map $\alpha^\vee
\mapsto e_{\alpha}$ extends from the dual root system $R^\vee$ to a
projective representation of the root lattice $Q(R^\vee)$ (cf.
\cite{FK}, \cite{Fr}, \cite{K}).

Let $\alpha \in R$. Then $\widehat{\mathfrak {sl}}_2 (\alpha)$
defined by \eqref{E:definition of sl2(alpha)} is of the type
$A^{(1)}_1$ with the canonical central element
\begin{equation*}
c_\alpha = \langle x_\alpha, - x_{-\alpha}\rangle c = 2c/
\langle\alpha,\alpha\rangle .
\end{equation*}
For a standard $\hat{\mathfrak g}$-module $L(\Lambda)$ of level
$\Lambda(c) = k$, the restriction to $\widehat{\mathfrak
{sl}}_2(\alpha)$ is of level $k_\alpha=\Lambda(c_\alpha)=k$ if
$\langle\alpha,\alpha\rangle = 2$ (i.e. if $\alpha$ is a long root)
and of level $k_\alpha=2k$ if $\langle\alpha,\alpha\rangle = 1$
(i.e. if $\alpha$ is a short root). Recall that $z\,x_\alpha(z)=\sum
x_\alpha(n)z^{-n}$ is a formal Laurent series in an indeterminate
$z$ with coefficients in $\text{End\,}(L(\Lambda))$. We also define
a formal Laurent series $z^{c_\alpha+\alpha^\vee}$ by
$$
z^{c_\alpha+\alpha^\vee} v_\mu = v_\mu z^{k_\alpha+\mu(\alpha^\vee)}
$$
whenever $v_\mu \in L(\Lambda)$ is a vector of $\mathfrak h$-weight
$\mu$. Set
\begin{equation*}
E^\pm(\alpha,z) = \exp \biggl(\sum_{i>0} \alpha^\vee (\pm i) z^{\mp
i}\big/ (\pm i)\biggr).
\end{equation*}
Since
$$
x_\alpha(z)^{k_\alpha+1} = 0\quad \text{on}\quad L(\Lambda),
$$
the exponential \ $\exp (z\,x_\alpha(z))=\exp\big(\sum
x_\alpha(n)z^{-n}\big)$ is well defined, and we have a
generalization of the Frenkel-Kac vertex operator formula (cf. [LP,
Theorem 5.6], [P1, Theorem 6.4] or [P2, Section 3]) for all standard
modules:
\begin{equation}\label{E:vertex operator formula for all levels}
\exp (z\,x_\alpha (z)) = E^-(-\alpha,z)\exp(- z\,x_{-\alpha}(z))
E^+(-\alpha,z)\, e_\alpha\, z^{c_\alpha+\alpha^\vee}.
\end{equation}
By \eqref{E:Weyl translations normalize root vectors} the $\mathfrak
h$-weight components of the vertex operator formula \eqref{E:vertex
operator formula for all levels} on level $k$ module $L(\Lambda)$
give relations
\begin{equation}\label{3.9}
\frac{1}{p!}\,(z\,x_\alpha(z))^{p} =\frac{1}{q!}
E^-(-\alpha,z)\,(-z\,x_{-\alpha}(z))^{q}\, E^+(-\alpha,z)\, e_\alpha
\,z^{k_\alpha+\alpha^\vee}
\end{equation}
for $p,q\geq 0$, \ $p+q=k_\alpha$. In level $k=1$ case for a long
root $\alpha$ and $p=1,0$ we have
\begin{equation}\label{E:Frenkel-Kac formulas0}
\begin{aligned}
z\,x_\alpha(z) &= E^-(-\alpha,z)\, E^+(-\alpha,z)\, e_\alpha \,z^{1+\alpha},\\
1 &= E^-(-\alpha,z)\,(-z\,x_{-\alpha}(z))\, E^+(-\alpha,z)\,
e_\alpha\,z^{1+\alpha}.
\end{aligned}
\end{equation}
Since in this case $e_{\alpha}e_{-\alpha}=-1$, relations
\eqref{E:Frenkel-Kac formulas0} are simply the Frenkel-Kac vertex
operator formulas
\begin{equation*}
\begin{aligned}
x_\alpha(z)&= E^-(-\alpha,z)\, E^+(-\alpha,z)\, e_\alpha
\,z^{\alpha},\\
x_{-\alpha}(z)&= E^-(\alpha,z)\, E^+(\alpha,z)\, e_{-\alpha}
\,z^{-\alpha}
\end{aligned}
\end{equation*}
(see \cite{FK}, \cite{Fr}). In fact, the Frenkel-Kac vertex operator
formulas for level $1$ standard $\widehat{\mathfrak {sl}}_2
(\alpha)$-modules imply $x_{\alpha}(z)\sp2=x_{-\alpha}(z)\sp2=0$ and
the relation \eqref{E:vertex operator formula for all levels}, and
for higher level $k$ modules we prove \eqref{E:vertex operator
formula for all levels} by simply applying the ``exponentials of Lie
algebra elements'' on both sides of \eqref{E:vertex operator formula
for all levels} to tensor product of $k$ copies of level $1$
modules.

Let us denote by $\langle\,e_\alpha\,;\,\alpha\in\Gamma\,\rangle$ a
group of operators on $L(\Lambda)$ generated by all operators
$e_\alpha$, $\alpha\in\Gamma$. Then we have:

\begin{lemma}\label{L:lemma 1 on spanning of standard module}
$L(\Lambda)=\langle\,e_\alpha\,;\,\alpha\in\Gamma\,\rangle\,
U(\hat{\mathfrak g}_1)\,v_\Lambda$.
\end{lemma}
\begin{proof} First notice that the Lie algebra $\mathfrak g$ is
generated by $\mathfrak g_1 \cup \mathfrak g_{-1}$. In particular
$\text{span\,} \Gamma^\vee=\mathfrak h$. Similarly $[\hat{\mathfrak
g},\hat{\mathfrak g}]$ is generated by $\hat{\mathfrak g}_1 \cup
\hat{\mathfrak g}_{-1}$, so we have
$$
L(\Lambda)= \{\,x_1\dots x_s \,v_\Lambda\mid s\geqslant 0, \ x_i\in
\hat{\mathfrak g}_1 \cup \hat{\mathfrak g}_{-1}\}.
$$
By using vertex operator formula \eqref{3.9} for $\alpha\in
(-\Gamma)$ and $p=1$ we may replace each $x_i\in \hat{\mathfrak
g}_{-1}$ by a product of elements from
$$
\{\,e_\alpha\mid\alpha\in (-\Gamma)\} \cup \mathfrak s \cup
\hat{\mathfrak g}_1,
$$
where $\mathfrak s$ denotes the Heisenberg subalgebra
$$
\mathfrak s = \sum_{j\in\mathbb Z\backslash\{0\}}\, \mathfrak h
\otimes t^j + \mathbb C c,\qquad \mathfrak s_- = \sum_{j<0}\,
\mathfrak h \otimes t^j.
$$
Since both group elements $e_\alpha$ and Lie algebra elements from
the Heisenberg subalgebra $\mathfrak s$ normalize $\hat{\mathfrak
g}_1$, we get
$$
L(\Lambda_0) = \langle\,e_\alpha\,;\,\alpha\in\Gamma\,\rangle\,
U(\hat{\mathfrak g}_1)\, U(\mathfrak s_-)\,v_\Lambda.
$$
Now notice that $U(\mathfrak s_-)$ is generated by the coefficients
of $E^-(-\alpha,z)$ for $\alpha\in\Gamma$. So by using vertex
operator formula \eqref{3.9} for $\alpha\in \Gamma$ and $q=0$ we may
replace elements in $U(\mathfrak s_-)v_\Lambda$ by elements in
$\langle\,e_\alpha\,;\,\alpha\in\Gamma\,\rangle \,U(\hat{\mathfrak
g}_1)\,v_\Lambda$.
\end{proof}

As in [P2, Section 5] we set
\begin{equation}\label{E: Weyl group translation operator e}
e=e_{\varepsilon_1-\varepsilon_2}e_{\varepsilon_1}e_{\varepsilon_1+\varepsilon_2}
=\prod_{\alpha\in\Gamma}\,e_\alpha.
\end{equation}

\begin{proposition}\label{P:spanning of standard module}
Let $L(\Lambda)_\mu$ be a weight subspace of $L(\Lambda).$ Then
there exists an integer $m_0$ such that for any fixed $m\leqslant
m_0$ the set of vectors
$$
e^m\,x_{\beta_1}(j_1)\dots x_{\beta_s}(j_s)\,v_\Lambda \in
L(\Lambda)_\mu,
$$
where $s\geqslant 0$, $\beta_1,\dots,\beta_s \in \Gamma$,
$j_1,\dots,j_s \in \Bbb Z$, is a spanning set of $L(\Lambda)_\mu$.
In particular
$$
L(\Lambda)=\langle\,e\,\rangle\, W(\Lambda).
$$
\end{proposition}
\begin{proof} Since $\dim L(\Lambda)_\mu < \infty$, by
Lemma~\ref{L:lemma 1 on spanning of standard module} we may choose a
finite spanning set of vectors of the form
$$
(\prod_{\alpha\in\Gamma}\,e_\alpha)^m\,\prod_{\alpha\in\Gamma}\,
e_\alpha^{p_\alpha}\, x_{\beta_1}(j_1)\dots
x_{\beta_r}(j_r)\,v_\Lambda,
$$
$r\geqslant 0$, $x_{\beta_i}(j_i) \in \hat{\mathfrak g}_1$, $ m$
fixed for all vectors. Clearly there exists $m_0$ such that if we
choose $m\leqslant m_0,$ then all $p_\alpha\geqslant 0$ for all
vectors. Since $e_\alpha$ normalize $\hat{\mathfrak g}_1$, we have a
spanning set of vectors of the form
$$
e^m\, x_{\beta_1}(j'_1)\dots x_{\beta_r}(j'_r)\,
\prod_{\alpha\in\Gamma} \,e_\alpha^{p_\alpha}\, v_\Lambda.
$$
Now in a finite number of steps we replace each $e_\alpha v_\Lambda$
by an element in $U(\hat{\mathfrak g}_1)v_\Lambda$ by using
coefficients of $z^{k_\alpha+\Lambda(\alpha\sp\vee)}$ in vertex
operator formula \eqref{3.9} for $\alpha\in \Gamma$ and $q=0$.
\end{proof}

\section{Bases consisting of semi-infinite monomials}\label{S:semi-infinite monomials}

In \eqref{E:Lambda to Lambda star} we have set
$\Lambda\sp*=k_1\Lambda_0+k_0\Lambda_1+k_2\Lambda_2
\quad\text{for}\quad
\Lambda=k_0\Lambda_0+k_1\Lambda_1+k_2\Lambda_2$. Note that
$\Lambda\sp{**}=\Lambda$. The relation \eqref{E:v(Lambda to Lambda
star)} applied twice together with Lemma~\ref{L:commutation of omega
and monomials} gives

\begin{equation}\label{E:v(Lambda to Lambda star twice)}
\begin{aligned}
&\
[\omega]\sp2v_{\Lambda}=[\omega]\sp2\left(v_{\Lambda_0}\sp{\otimes
k_0}\otimes v_{\Lambda_1}\sp{\otimes k_1}\otimes
v_{\Lambda_{2}}\sp{\otimes
k_2}\right)\\
&=C'\,[\omega]x_{\underline{2}}(-1)\sp{k_1}x_0(-1)\sp{k_2}x_2(-1)\sp{k_1}\left(v_{\Lambda_1}\sp{\otimes
k_0}\otimes v_{\Lambda_0}\sp{\otimes k_1}\otimes
v_{\Lambda_{2}}\sp{\otimes k_2}\right)\\
&=C'\,x_{\underline{2}}(-2)\sp{k_1}x_0(-2)\sp{k_2}x_2(-2)\sp{k_1}[\omega]\left(v_{\Lambda_1}\sp{\otimes
k_0}\otimes v_{\Lambda_0}\sp{\otimes k_1}\otimes
v_{\Lambda_{2}}\sp{\otimes k_2}\right)\\
&= C\,x_{\underline{2}}(-2)\sp{k_1}x_0(-2)\sp{k_2}x_2(-2)\sp{k_1}
x_{\underline{2}}(-1)\sp{k_0}x_0(-1)\sp{k_2}x_2(-1)\sp{k_0}v_{\Lambda}
\end{aligned}
\end{equation}
for some $C=C_\Lambda\neq0$. If we set
\begin{equation*}
x(\kappa_\Lambda)=x_{\underline{2}}(-2)\sp{k_1}x_0(-2)\sp{k_2}x_2(-2)\sp{k_1}
x_{\underline{2}}(-1)\sp{k_0}x_0(-1)\sp{k_2}x_2(-1)\sp{k_0},
\end{equation*}
then \eqref{E:v(Lambda to Lambda star twice)} reads
\begin{equation}\label{E: omega2 v(Lambda) to v(Lambda)}
[\omega]\sp2v_{\Lambda}=C_\Lambda x(\kappa_\Lambda)v_{\Lambda}.
\end{equation}
This relation and Lemma~\ref{L:commutation of omega and monomials}
imply
\begin{equation*}
[\omega]\sp2\colon L(\Lambda)\to L(\Lambda)\quad\text{and}\quad
[\omega]\sp2\colon W(\Lambda)\to W(\Lambda).
\end{equation*}

\begin{lemma}\label{L:Weyl translation and simple current operator}
$e=C\,[\omega]\sp4$ \ for some \ $C\neq0$.
\end{lemma}
\begin{proof}
Since $(\varepsilon_1-\varepsilon_2)\sp\vee+\varepsilon_1\sp\vee+
(\varepsilon_1+\varepsilon_2)\sp\vee=4\varepsilon_1$, relations
\eqref{E:Weyl translations normalize root vectors} and
\eqref{commutation of omega and x alpha} imply
\begin{equation*} e\, x_{\pm\gamma}(j)e\sp{-1}= x_\gamma(j\mp 4)\quad\text{and}\quad
[\omega]\sp4 x_{\pm\gamma}(j)[\omega]\sp{-4}= x_\gamma(j\mp 4)
\end{equation*}
for all $\gamma\in\Gamma$. So $e\,[\omega]\sp{-4}$ commutes with the
action of $\hat{\mathfrak g}$ and must be proportional to the
identity operator on $L(\Lambda)$.
\end{proof}

\begin{remark}\label{R:omega is a 4th root of e}
Roughly speaking, the above lemma states that the simple current
operator $[\omega]$ is a ``fourth root'' of inner automorphism $e$.
By choosing $e=\prod_{\alpha\in\Gamma}\,e_\alpha$ we follow the
notation in \cite{P2}, but our arguments would work in the same way
if we have chosen inner automorphism $e$ to be
$$
e_{\varepsilon_1-\varepsilon_2}e_{\varepsilon_1+\varepsilon_2}=
C'e_{\varepsilon_1}=C[\omega]\sp2
$$
for some $C',C\neq0$.\end{remark}

\begin{theorem}\label{T:semi infinite monomials basis} Let
$L(\Lambda)_\mu$ be a weight subspace of a level $k$ standard
$B_2\sp{(1)}$-module $L(\Lambda)$. Then there exists an integer
$m_0$ such that for any fixed $m\leq m_0$ the set of vectors
$$
C_\Lambda\sp{-m}\,[\omega]\sp{2m} x(\pi)v_\Lambda\in L(\Lambda)_\mu
$$
such that monomial vectors $x(\pi)v_\Lambda\in W(\Lambda)$ satisfy
difference conditions \eqref{E:difference conditions} and initial
conditions \eqref{E:initial conditions}
 is a basis of $L(\Lambda)_\mu$. Moreover, for two choices of $m_1,
 m_2\leq m_0$ the corresponding two bases are equal.
\end{theorem}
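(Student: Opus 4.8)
The plan is to deduce the whole statement from Theorem~\ref{T:the main theorem} and the relation \eqref{E: omega2 v(Lambda) to v(Lambda)}, by exhibiting the desired vectors as a weight--homogeneous basis of an increasing exhaustion of $L(\Lambda)$ by images of $W(\Lambda)$. First I record the formal part. Each arrow in Proposition~\ref{P: omega preserves W's} is a bijection, so $[\omega]^{2m}$ is a linear automorphism of $L(\Lambda)$, and by Lemma~\ref{L: degree for level k omega} it carries $\mathfrak h\oplus\mathbb Cd$--weight spaces to weight spaces; hence every $[\omega]^{2m}x(\pi)v_\Lambda$ is a weight vector, and applying the automorphism $[\omega]^{2m}$ to the basis $\{x(\pi)v_\Lambda\}$ of $W(\Lambda)$ furnished by Theorem~\ref{T:the main theorem} yields a weight--homogeneous basis of $[\omega]^{2m}W(\Lambda)$. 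Consequently the members lying in $L(\Lambda)_\mu$ are automatically linearly independent, being part of a basis, so the entire content is to show that for $m$ small enough they \emph{span} $L(\Lambda)_\mu$.

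For this I introduce the chain. By \eqref{E: omega2 v(Lambda) to v(Lambda)} and Lemma~\ref{L:commutation of omega and monomials} one has $[\omega]^2W(\Lambda)\subseteq W(\Lambda)$, so applying $[\omega]^{-2}$ repeatedly produces an increasing chain $F_N:=[\omega]^{-2N}W(\Lambda)$, $F_0\subseteq F_1\subseteq\cdots$, with union $F_\infty$. Since each $L(\Lambda)_\mu$ is finite dimensional, the chain $F_N\cap L(\Lambda)_\mu$ stabilizes, so once $F_\infty=L(\Lambda)$ is known there is $N_0(\mu)$ with $L(\Lambda)_\mu\subseteq F_N$ for all $N\ge N_0$; taking $m_0=-N_0$ then makes the weight--$\mu$ part of the basis of $[\omega]^{2m}W(\Lambda)$ span $L(\Lambda)_\mu$ for every $m\le m_0$, which settles the basis assertion modulo the spanning statement.

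The crux is therefore $F_\infty=L(\Lambda)$, which I would get by proving that $F_\infty$ is a $\tilde{\mathfrak g}$--submodule: it contains $v_\Lambda$, and $L(\Lambda)$ is irreducible, so this forces equality. Stability under $\tilde{\mathfrak g}_1$ is immediate from \eqref{commutation of omega and x}, since $x_\gamma(n)F_N=[\omega]^{-2N}x_\gamma(n-2N)W(\Lambda)\subseteq F_N$. For $g\in\tilde{\mathfrak g}_0\cup\tilde{\mathfrak g}_{-1}$ I conjugate by a high power of $[\omega]$: by \eqref{commutation of omega and x alpha} a $\tilde{\mathfrak g}_{-1}$ generator has its mode raised, so acting on $[\omega]^{2N}v\in W(\Lambda)$ it becomes a positive mode; commuting it rightward past the $\tilde{\mathfrak g}_1$ letters produces only $\tilde{\mathfrak g}_0$ brackets (differences of two elements of $\Gamma$ lie in $\{0,\pm\varepsilon_2\}$), which in turn bracket back into $\tilde{\mathfrak g}_1$, while the generator itself annihilates $v_\Lambda$ once its mode is positive. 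Thus, modulo terms already in $W(\Lambda)$, one is reduced to $\tilde{\mathfrak g}_0$ acting on strictly shorter $\tilde{\mathfrak g}_1$ monomials, and an induction on the length of these monomials brings the whole question down to the base cases $y(r)v_\Lambda$ with $y\in\mathfrak g_0$, $r\le 0$ (together with the degree--zero top). This last reduction is the step I expect to be the main obstacle: here $W(\Lambda)$ is genuinely not stable under $\tilde{\mathfrak g}_0$, and what must be forced is that applying the quasi--periodic tail $[\omega]^{2N}v_\Lambda = C\,x(\kappa_\Lambda)^{+2(N-1)}\cdots x(\kappa_\Lambda)v_\Lambda$ to such a $y(r)v_\Lambda$ lands back in $U(\tilde{\mathfrak g}_1)v_\Lambda$. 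This is precisely where the module structure of $L(\Lambda)$ beyond the abelian $\tilde{\mathfrak g}_1$ enters, and it should be extracted from the relations $x_\theta(z)^{k+1}=0$ and their $U(\mathfrak g_0)$ consequences (the mechanism behind the leading terms \eqref{E:leading terms}): at sufficiently high degree these rewrite the offending vectors as combinations of monomials in $\tilde{\mathfrak g}_1$ alone. Everything else is formal.

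For the normalization clause I would use \eqref{E: omega2 v(Lambda) to v(Lambda)} and Lemma~\ref{L:commutation of omega and monomials} to write $[\omega]^{2m}x(\pi)v_\Lambda=C_\Lambda\,[\omega]^{2(m-1)}x(\pi^{+2})x(\kappa_\Lambda)v_\Lambda$, where $x(\pi^{+2})$ is $x(\pi)$ shifted up by two degrees and $x(\kappa_\Lambda)$ occupies degrees $1,2$. A direct check shows that $\pi$ satisfies \eqref{E:difference conditions}--\eqref{E:initial conditions} if and only if the combined partition does: the three initial conditions for $\pi$ become exactly the three junction difference conditions across degrees $2,3$, while the initial conditions for the combined partition hold as equalities forced by the pattern $(a_1,b_1,c_1)=(k_0,k_2,k_0)$. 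Hence $\pi\mapsto\pi^{+2}\kappa_\Lambda$ is an injection of index sets preserving weight and membership in $L(\Lambda)_\mu$; since for $m,m-1\le m_0$ both index sets have cardinality $\dim L(\Lambda)_\mu$, it is a bijection and the two bases differ by the single scalar $C_\Lambda$. Iterating, the bases attached to any $m_1,m_2\le m_0$ are related by $C_\Lambda^{\,m_1-m_2}$ times the identity matrix.
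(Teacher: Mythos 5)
Your formal framework (linear independence via bijectivity of $[\omega]^{2m}$, the increasing chain $F_N=[\omega]^{-2N}W(\Lambda)$, stabilization on finite-dimensional weight spaces) and your treatment of the normalization clause are sound; the latter is in fact exactly the paper's argument, using (\ref{E: omega2 v(Lambda) to v(Lambda)}) and Lemma~\ref{L:commutation of omega and monomials} to identify $[\omega]^{2m}x(\pi)v_\Lambda$ with $C_\Lambda[\omega]^{2(m-1)}x(\pi\sp{-2})x(\kappa_\Lambda)v_\Lambda$ and checking that DC and IC for $\pi$ are equivalent to DC and IC for the concatenated partition. But the spanning step $F_\infty=L(\Lambda)$, which is the entire content of the theorem, is left unproved, and you say so yourself (``the step I expect to be the main obstacle \dots it should be extracted from the relations $x_\theta(z)\sp{k+1}=0$''). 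The gap is concrete: conjugation by $[\omega]$ shifts modes by $\alpha(\omega)$, and $\alpha(\omega)=0$ for $\alpha=\pm\varepsilon_2$ and for all of $\mathfrak h$, so no power of $[\omega]$ improves the base-case vectors $y(r)v_\Lambda$ with $y\in\mathfrak g_0$, $r\leq 0$ --- for instance $x_{-\varepsilon_2}(0)v_\Lambda\neq 0$ whenever $k_2>0$, and $h(-1)v_\Lambda$. Showing that such vectors lie in some $[\omega]^{-2N}W(\Lambda)$ is essentially equivalent to the spanning assertion itself, and the relations $x_\theta(z)\sp{k+1}=0$ and their $U(\mathfrak g_0)$-orbit will not do this for you as stated: in the paper these relations live inside $U(\tilde{\mathfrak g}_1)$ and serve only to prune the PBW spanning set of $W(\Lambda)$; they provide no mechanism for rewriting $\tilde{\mathfrak g}_0$- or $\tilde{\mathfrak g}_{-1}$-descendants of $v_\Lambda$ as $\tilde{\mathfrak g}_1$-monomials applied to $v_\Lambda$.

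The paper closes exactly this hole by a different device: it constructs the Weyl group translation operator $e=e_{\varepsilon_1-\varepsilon_2}e_{\varepsilon_1}e_{\varepsilon_1+\varepsilon_2}$ of (\ref{E: Weyl group translation operator e}), shows via the conjugation formulas (\ref{E: Weyl group translation e2 conjugation}) and invariance of $\mathbb Cv_\Lambda$ under $e\sp2[\omega]\sp{-6}$ that $e\sp2=C[\omega]\sp6$, and then invokes the proof of Proposition~5.2 in \cite{P2}, which establishes that the vectors $e\sp m x(\pi)v_\Lambda$ span $L(\Lambda)_\mu$ for $m$ small enough. The point is that $e$, unlike $[\omega]$, is a product of exponentials of root vectors of $\tilde{\mathfrak g}$ acting on $L(\Lambda)$, so the spanning argument can exploit the full $\tilde{\mathfrak g}$-module structure (in particular it handles precisely the $\mathfrak g_0$-directions that your $[\omega]$-only induction cannot reach). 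To repair your proposal you would either need to import that result, or genuinely carry out the rewriting of $y(r)x(\kappa_\Lambda\sp{+2})\cdots x(\kappa_\Lambda)v_\Lambda$, $y\in\mathfrak g_0\cup\mathfrak g_{-1}$, into $U(\tilde{\mathfrak g}_1)v_\Lambda$ --- a substantial argument, not a formality.
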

\begin{proof}
By Proposition~\ref{P:spanning of standard module} vectors of the
form
$$
e\sp mx(\pi)v_\Lambda\in L(\Lambda)_\mu
$$
span $L(\Lambda)_\mu$ for a given small enough $m$, so by
Theorem~\ref{T:the main theorem} monomial vectors satisfying DC and
IC will form a basis. By Lemma~\ref{L:Weyl translation and simple
current operator} we can replace $e\sp{m}$ with $[\omega]\sp{4m}$.
Note that by Lemma~\ref{L:commutation of omega and monomials}, the
notation from Remark~\ref{R:notation for shifted partitions} and
\eqref{E: omega2 v(Lambda) to v(Lambda)}
\begin{equation}\label{E:sequence of basis vectors}
C_\Lambda\sp{-m}[\omega]\sp{2m}x(\pi)v_\Lambda
\!=\![\omega]\sp{2m-2}x(\pi\sp{-2})[\omega]\sp{2}v_\Lambda
\!=\!C_\Lambda\sp{-m+1}
[\omega]\sp{2(m-1)}x(\pi\sp{-2})x(\kappa_\Lambda)v_\Lambda
\end{equation}
and the monomial vector $x(\pi)v_\Lambda$ satisfies DC and IC if and
only if the monomial vector
$x(\pi\sp{-2})x(\kappa_\Lambda)v_\Lambda$ satisfies DC and IC. We
can iterate this process
$$
C_\Lambda\sp{-m}\,[\omega]\sp{2m}x(\pi)v_\Lambda =\dots
=C_\Lambda\sp{-m+2}\,[\omega]\sp{2(m-2)}x(\pi\sp{-4})x(\kappa_\Lambda\sp{-2})
x(\kappa_\Lambda)v_\Lambda=\dots\,.
$$
Hence for different choices of integers $m, m-1, m-2, \dots$  we
always get the same basis vector \eqref{E:sequence of basis
vectors}, only written in a different way.
\end{proof}

\begin{remark} One may think of Theorem~\ref{T:semi infinite
monomials basis} as a vertex operator construction for an arbitrary
standard $\hat{\mathfrak g}$-module $L(\Lambda)$. And while the
basis constructed by using an inner automorphism and three
commutative currents is relatively simple, the action of
$\hat{\mathfrak g}$ is given by a complicated implicit use of vertex
operator formula \eqref{E:vertex operator formula for all levels}.
\end{remark}

In the level $k=1$ case linear independence in this theorem is
proved in \cite{P2} for the basic representation $L(\Lambda_0)$ by
writing basis elements as semi-infinite monomials and then
``counting'' them by using crystal base character formula
\cite{KKMMNN}. Such semi-infinite monomials interpretation is
possible for all standard $B_2\sp{(1)}$-modules, like in \cite{FS}
for $A_1\sp{(1)}$: for fixed $\Lambda$ and $m\in \mathbb Z$ set
$$
v_{-m}=C_\Lambda\sp m\,[\omega]\sp{-2m}v_\Lambda.
$$
From Lemma~\ref{L: degree for level k omega} we see that $\mathfrak
h$-weight of $v_{-m}$ is $\Lambda|\mathfrak h-2mk\varepsilon_1$ and
degree  of $v_{-m}$ is $2m\Lambda(\varepsilon_1)-2m\sp2k$. By using
Lemma~\ref{L:commutation of omega and monomials} and \eqref{E:
omega2 v(Lambda) to v(Lambda)} as in \eqref{E:sequence of basis
vectors} we get
\begin{equation}\label{E:sequence of vectors v-m}
v_{-m}= x(\kappa_\Lambda\sp{+2(m+1)})v_{-m-1}=
x(\kappa_\Lambda\sp{+2(m+1)})x(\kappa_\Lambda\sp{+2(m+2)})v_{-m-2}=\dots\,.
\end{equation}
So by ``taking a limit'' we see that the vector $v_{-m}$ can be
represented by a semi-infinite quasi-periodic monomial
$$
v_{-m}\sim x(\kappa_\Lambda\sp{+2(m+1)})\cdot
x(\kappa_\Lambda\sp{+2(m+2)})\cdot\dots\,= \prod_{p=1}\sp\infty
x(\kappa_\Lambda\sp{+2(m+p)})\,,
$$
or written in more detail
$$
\begin{aligned}
&v_{-m}\sim
x_{\underline{2}}(2m)\sp{k_1}x_0(2m)\sp{k_2}x_2(2m)\sp{k_1}
x_{\underline{2}}(2m+1)\sp{k_0}x_0(2m+1)\sp{k_2}x_2(2m+1)\sp{k_0}\,\cdot\\
&x_{\underline{2}}(2m+2)\sp{k_1}x_0(2m+2)\sp{k_2}x_2(2m+2)\sp{k_1}
x_{\underline{2}}(2m+3)\sp{k_0}x_0(2m+3)\sp{k_2}x_2(2m+3)\sp{k_0}\dots
\end{aligned}
$$
Now we can write basis elements of $L(\Lambda)_\mu$ given by
Theorem~\ref{T:semi infinite monomials basis} as
\begin{equation}\label{E: basis vector}
C_\Lambda\sp{m}[\omega]\sp{-2m}x(\pi)v_\Lambda=x(\pi\sp{+2m})\,C_\Lambda\sp{m}[\omega]\sp{-2m}v_\Lambda
=x(\pi\sp{+2m})v_{-m}.
\end{equation}
Then \eqref{E:sequence of vectors v-m} implies
$$
\begin{aligned}
&x(\pi\sp{+2m})v_{-m}\\
&=x(\pi\sp{+2m})x(\kappa_\Lambda\sp{+2(m+1)})v_{-m-1}\\
&=x(\pi\sp{+2m})x(\kappa_\Lambda\sp{+2(m+1)})x(\kappa_\Lambda\sp{+2(m+2)})v_{-m-2}\\
&\quad\vdots
\end{aligned}
$$
and we see that our basis vector \eqref{E: basis vector} can be
represented by a semi-infinite monomial with quasi-periodic tail
$$
x(\pi\sp{+2m})v_{-m}\sim
x(\pi\sp{+2m})x(\kappa_\Lambda\sp{+2(m+1)})x(\kappa_\Lambda\sp{+2(m+2)})
x(\kappa_\Lambda\sp{+2(m+3)})\dots\,.
$$
Hence we have:

\begin{corollary}\label{C: bases as semi-infinite monomials}
We can parametrize a basis of level $k$ standard
$B_2\sp{(1)}$-module $L(\Lambda)$,
$$\Lambda=\Lambda_0k_0+\Lambda_1k_1+\Lambda_2k_2\,,\qquad k=k_0+k_1+k_2\,,$$
by semi-infinite monomials
\begin{equation*}
\prod_{j\in\mathbb
Z}x_{\underline{2}}(-j)\sp{c_j}x_0(-j)\sp{b_j}x_2(-j)\sp{a_j}\,,\qquad
c_j=b_j=a_j=0\quad\text{for}\quad -j\ll 0,
\end{equation*}
with quasi-periodic tail  with the period of length $6$
\begin{equation*}
\begin{aligned}
(\dots, \ &c_{-2n},\ b_{-2n},\ a_{-2n},\ c_{-2n-1},\ b_{-2n-1},\
a_{-2n-1},\ \dots)\\=(\dots,& \ k_1,\ k_2,\ k_1,\ k_0,\ k_2,\ k_0,\
\dots)
\end{aligned}
\end{equation*}
for $n\gg 0$, satisfying for all $j\in\mathbb Z$ difference
conditions
\begin{equation*}
\begin{aligned}
c_{j+1}+b_{j+1}+c_{j}&\leq k,\\
b_{j+1}+a_{j+1}+c_{j}&\leq k,\\
a_{j+1}+c_{j}+b_{j}&\leq k,\\
a_{j+1}+b_{j}+a_{j}&\leq k.
\end{aligned}
\end{equation*}
\end{corollary}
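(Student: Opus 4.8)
The plan is to read the Corollary off Theorem~\ref{T:semi infinite monomials basis} by translating the finitely supported basis of each weight space into the semi-infinite monomial language set up just before the statement. First I would fix a weight $\mu$ and choose $m\le m_0$ as in Theorem~\ref{T:semi infinite monomials basis}, so that the vectors $[\omega]^{-2m}x(\pi)v_\Lambda$, with $x(\pi)v_\Lambda\in W(\Lambda)$ satisfying (\ref{E:difference conditions}) and (\ref{E:initial conditions}), form a basis of $L(\Lambda)_\mu$. By (\ref{E: basis vector}) each such vector equals $x(\pi^{+2m})v_{-m}$, and iterating (\ref{E: omega2 v(Lambda) to v(Lambda)}) together with Lemma~\ref{L:commutation of omega and monomials} rewrites $v_{-m}$ as the quasi-periodic semi-infinite monomial displayed before the statement. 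Concatenating the aperiodic head $x(\pi^{+2m})$ with this tail yields a semi-infinite monomial of the asserted shape, and the ``multiple of identity'' clause of Theorem~\ref{T:semi infinite monomials basis} --- reflected combinatorially in the fact that raising $m$ merely prepends one more period $x(\kappa_\Lambda^{+2(m+1)})$ --- guarantees the assignment is well defined independently of $m$.

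The heart of the argument is a purely combinatorial bijection between colored partitions $\pi$ obeying (\ref{E:difference conditions}) and (\ref{E:initial conditions}) and semi-infinite monomials with the prescribed quasi-periodic tail obeying (\ref{E:difference conditions}) for \emph{all} $j\in\mathbb Z$. After the shift by $2m$ the head carries exactly the data $(c_j,b_j,a_j)$ of $\pi$ and sits at indices above the tail, whose top block is $(k_1,k_2,k_1)$ and which repeats the pattern $(\dots,k_1,k_2,k_1,k_0,k_2,k_0,\dots)$. I would then check three things: (i) within the head the four inequalities are literally the difference conditions of $\pi$, hence hold; (ii) within the tail all four inequalities of (\ref{E:difference conditions}) hold with equality, by the direct computation $k_1+k_2+k_0=k$ for each of the four patterns; and (iii) at the single junction degree the four difference conditions relating the tail block $(k_1,k_2,k_1)$ to the head block $(c_1,b_1,a_1)$ collapse exactly to $a_1\le k_0$, $b_1+a_1\le k_0+k_2$, $c_1+b_1\le k_0+k_2$, i.e. to the three initial conditions (\ref{E:initial conditions}), with the first one appearing twice.

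Finally I would pin down the weight and degree. The tail data are the same for every basis vector of a fixed $L(\Lambda)_\mu$, and Lemma~\ref{L: degree for level k omega} computes the $\mathfrak h$-weight and the $L(0)$-degree of $v_{-m}$ (equivalently of the regularized tail), which confirms that each semi-infinite monomial lands in the correct weight space and that distinct weight spaces give disjoint families. Taking the union over all $\mu$ then upgrades the per-weight-space bases of Theorem~\ref{T:semi infinite monomials basis} to the single semi-infinite parametrization of all of $L(\Lambda)$ claimed in the Corollary.

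The step I expect to be the only real obstacle is (iii): one must see that the initial conditions are not an extra hypothesis but are precisely the difference conditions straddling the boundary between the aperiodic head and the periodic tail, so that imposing (\ref{E:difference conditions}) globally is equivalent to imposing (\ref{E:difference conditions}) plus (\ref{E:initial conditions}) on the finite monomial governed by Theorem~\ref{T:the main theorem}. A secondary technical point is justifying the passage to the genuinely semi-infinite object, namely that the formal limit $v_{-\infty}$ is harmless because every assertion is read off at finite $m$ and is stable under $m\mapsto m+1$.
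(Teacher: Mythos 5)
Your proposal is correct and follows essentially the same route as the paper: the Corollary is read off from Theorem~\ref{T:semi infinite monomials basis} by rewriting basis vectors as $x(\pi^{+2m})v_{-m}$, expanding $v_{-m}$ into the quasi-periodic tail via iteration of (\ref{E: omega2 v(Lambda) to v(Lambda)}) and Lemma~\ref{L:commutation of omega and monomials}, and using Lemma~\ref{L: degree for level k omega} for weights and degrees. Your step (iii) — that the four junction difference conditions against the tail block $(k_1,k_2,k_1)$ collapse to the three initial conditions with $a_1\le k_0$ occurring twice — is exactly the content of the paper's closing remark that the initial conditions are built into the tail and difference conditions, which you have usefully made explicit.
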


Note that for semi-infinite monomials the initial conditions follow
from the form of quasi-periodic tail and the difference conditions.

\section{Presentation of $W(\Lambda)$}

\begin{theorem}\label{T:presentation of W(Lambda)}
Let $\Lambda=k_0\Lambda_0+k_1\Lambda_1+k_2\Lambda_2$ and
$k=k_0+k_1+k_2$. Let $$\mathcal P=\mathbb
C[x_{\underline{2}}(j),x_0(j),x_2(j)\mid j\leq-1]$$ and let
$\mathcal I_\Lambda$ be the ideal in the polynomial algebra
$\mathcal P$ generated by the set of polynomials
$$\begin{aligned}
&\bigcup_{\substack{n\leq-k-1}}U(\mathfrak
g_0)\cdot\Big(\sum_{\substack{j_1,\dots,j_{k+1}\leq-1\\j_1+\dots+j_{k+1}=n}}x_2(j_1)\dots
x_2(j_{k+1})\Big)\\
&\quad\bigcup \{x_2(-1)\sp{k_0+1}\}\bigcup U(\mathfrak g_0)\cdot
x_2(-1)\sp{k_0+k_2+1}\,,
\end{aligned}
$$
where $\cdot$ denotes the adjoint action of $\mathfrak g_0$ on
$\mathcal P$. Then as vector spaces
$$
W(\Lambda)\cong \mathcal P/\mathcal I_\Lambda.
$$
\end{theorem}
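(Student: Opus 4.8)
The plan is to realize the isomorphism as the natural evaluation map and then pin it down by comparing bases via Theorem~\ref{T:the main theorem}. Since $\tilde{\mathfrak g}_1$ is commutative we have $U(\tilde{\mathfrak g}_1)=S(\tilde{\mathfrak g}_1)$, and because $x_\gamma(j)v_\Lambda=0$ for all $\gamma\in\Gamma$ and $j\geq 0$, the Poincar\'{e}--Birkhoff--Witt theorem shows that the degree-preserving linear map
$$
f\colon \mathcal P\to W(\Lambda),\qquad x(\pi)\mapsto x(\pi)v_\Lambda,
$$
is surjective. Moreover $\ker f$ is an ideal of $\mathcal P$, since $pv_\Lambda=0$ implies $(qp)v_\Lambda=q\cdot(pv_\Lambda)=0$ for every $q\in\mathcal P$ as $p$ and $q$ commute in $S(\tilde{\mathfrak g}_1)$. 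Thus it suffices to prove $\ker f=\mathcal I_\Lambda$, which I will obtain from the two inclusions together with a graded basis count.

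First I would verify $\mathcal I_\Lambda\subseteq\ker f$, i.e. that each listed generator annihilates $v_\Lambda$. The coefficient of $z^{-n-k-1}$ in $x_\theta(z)^{k+1}=0$ on $L(\Lambda)$ is $\sum_{j_1+\dots+j_{k+1}=n}x_2(j_1)\cdots x_2(j_{k+1})$ with $x_2=x_\theta$; applied to $v_\Lambda$ every summand having a factor $x_2(j)$ with $j\geq0$ vanishes, so for $n\leq-k-1$ the surviving all-negative-mode part is exactly the first generator and it kills $v_\Lambda$. Using the consequence $U(\mathfrak g_0)\cdot x_\theta(z)^{k+1}=0$ on $L(\Lambda)$ and the same truncation, the whole first $\mathfrak g_0$-family annihilates $v_\Lambda$. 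For the remaining generators I would invoke the computations of Lemma~\ref{L:spanning for W(Lambda)}: $x_2(-1)^{k_0+1}v_\Lambda=0$ because $x_2(-1)v_{\Lambda_1}=x_2(-1)v_{\Lambda_2}=0$ and $x_2(-1)^2v_{\Lambda_0}=0$ force at most $k_0$ surviving factors on $v_{\Lambda_0}^{\otimes k_0}$, while $U(\mathfrak g_0)\cdot x_2(-1)^{k_0+k_2+1}$ annihilates $v_\Lambda$ by the relation $x_2(z)^{k_0+k_2+1}=0$ on $L(k_0\Lambda_0+k_2\Lambda_2)$ together with Lemma~\ref{L:initial conditions for W(Lambda1)}. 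Hence $f$ induces a surjection $\phi\colon\mathcal P/\mathcal I_\Lambda\to W(\Lambda)$.

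Next I would show that $\mathcal P/\mathcal I_\Lambda$ is spanned by the classes of the monomials $x(\pi)$ satisfying the difference conditions (\ref{E:difference conditions}) and the initial conditions (\ref{E:initial conditions}); this is the leading-term reduction of Section~4 transported into $\mathcal P$ modulo $\mathcal I_\Lambda$. The point to verify with care is that each reduction uses only the chosen generators. Each generator is the negative-mode part of a homogeneous relation on $W(\Lambda)$, so its smallest (leading) monomial lies in $\mathcal P$ and is one of the forbidden factors: the four families (\ref{E:leading terms}) are the leading terms of the $(2k+1)$-dimensional $\mathfrak g_0$-module of first generators, while $x_2(-1)^{k_0+1}$ and the two initial leading monomials $x_0(-1)^{b_1}x_2(-1)^{a_1}$, $x_{\underline 2}(-1)^{c_1}x_0(-1)^{b_1}$ are the leading terms of $x_2(-1)^{k_0+1}$ and of $U(\mathfrak g_0)\cdot x_2(-1)^{k_0+k_2+1}$. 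Writing each generator as its leading monomial plus strictly larger monomials of $\mathcal P$ lets me replace any forbidden factor, modulo $\mathcal I_\Lambda$, by a combination of larger monomials; since the relations are homogeneous and each degree contains only finitely many monomials, induction on the order rewrites every class as a combination of DC--IC classes.

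Finally I would close by the basis count. The map $\phi$ sends the spanning set of DC--IC classes to the vectors $\{x(\pi)v_\Lambda\}$, which by Theorem~\ref{T:the main theorem} form a basis of $W(\Lambda)$ and are in particular linearly independent. A spanning set mapping onto a linearly independent set is itself linearly independent, so the DC--IC classes are a basis of $\mathcal P/\mathcal I_\Lambda$ and $\phi$ carries a basis bijectively to a basis; hence $\phi$ is an isomorphism, $\ker f=\mathcal I_\Lambda$, and $W(\Lambda)\cong\mathcal P/\mathcal I_\Lambda$. The main obstacle is the third step: one must confirm that the explicitly listed generators, rather than the full vertex-operator-algebra relations on $W(\Lambda)$ (which involve non-negative modes absent from $\mathcal P$), already supply every leading term needed for the reduction. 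Granting this matching, the passage from Theorem~\ref{T:the main theorem} to the presentation is purely formal.
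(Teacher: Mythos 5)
Your proposal is correct and follows essentially the same route as the paper: the evaluation map $x(\pi)\mapsto x(\pi)v_\Lambda$, the inclusion $\mathcal I_\Lambda\subseteq\ker f$ via the relation $x_\theta(z)^{k+1}=0$ and the computations of Lemma~\ref{L:spanning for W(Lambda)}, the leading-term reduction showing DC--IC monomials span $\mathcal P/\mathcal I_\Lambda$, and the final comparison with the basis from Theorem~\ref{T:the main theorem}. The ``main obstacle'' you flag is handled exactly as you suggest (and is resolved because $\tilde{\mathfrak g}_1$ is commutative, so nonnegative-mode factors in the untruncated relations can always be slid onto $v_\Lambda$, making the negative-mode truncations carry the same leading terms); the paper disposes of this point with its reference back to the proof of Lemma~\ref{L:spanning for W(Lambda)}.
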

\begin{proof} Since $\mathcal P\subset S(\hat{\mathfrak g}_1)=U(\hat{\mathfrak
g}_1)$,
we have a linear map
$$
f\colon \mathcal P\to W(\Lambda),\quad f\colon x(\pi)\mapsto
x(\pi)v_\Lambda.
$$
Since $x(j)v_\Lambda=0$ for $x\in\mathfrak g_1$ and $j\geq0$,
relations $U(\mathfrak g_0)\cdot x_\theta(z)\sp{k+1}=0$ on
$L(\Lambda)$ imply
$$
\bigcup_{\substack{n\leq-k-1}}U(\mathfrak
g_0)\cdot\Big(\sum_{\substack{j_1,\dots,j_{k+1}\leq-1\\j_1+\dots+j_{k+1}=n}}x_2(j_1)\dots
x_2(j_{k+1})\Big)\subset \ker f.
$$
From the proof of Lemma~\ref{L:spanning for W(Lambda)} we see that
$$
\{x_2(-1)\sp{k_0+1}\}\bigcup U(\mathfrak g_0)\cdot
x_2(-1)\sp{k_0+k_2+1}\subset \ker f.
$$
Hence we have a surjective linear map
$$
g\colon \mathcal P/\mathcal I_\Lambda\to W(\Lambda).
$$
On the quotient $\mathcal P/\mathcal I_\Lambda$ we have relations
$$
U(\mathfrak
g_0)\cdot\Big(\sum_{\substack{j_1,\dots,j_{k+1}\leq-1\\j_1+\dots+j_{k+1}=n}}x_2(j_1)\dots
x_2(j_{k+1})\Big)=0\quad\text{for all}\quad n\leq-k-1
$$
and
$$
x_2(-1)\sp{k_0+1}=0 \quad\text{and}\quad U(\mathfrak g_0)\cdot
x_2(-1)\sp{k_0+k_2+1}=0.
$$
As in the proof of Lemma~\ref{L:spanning for W(Lambda)} we see that
monomials $x(\pi)\in \mathcal P$ satisfying DC and IC span the
quotient $\mathcal P/\mathcal I_\Lambda$. Since $g$ maps this
spanning set to a basis of $W(\Lambda)$, monomials $x(\pi)\in
\mathcal P$ satisfying DC and IC are a basis of $\mathcal P/\mathcal
I_\Lambda$ and $g$ is an isomorphism.
\end{proof}

\section{A connection with monomial bases of standard
$A_1\sp{(1)}$-modules}

Let now $\mathfrak g={\mathfrak sl}(2,\mathbb C)$ with the standard
basis $e, h, f$. Then we have monomial bases of standard
$\hat{\mathfrak g}$-modules constructed in \cite{MP1}, \cite{MP2}
and \cite{FKLMM}:

{\it For integral dominant $\Lambda=k_0\Lambda_0+k_1\Lambda_1$ of
level $k=k_0+k_1$ the set of finite monomial vectors
$$
x(\pi)v_\Lambda=\ \dots\, f(-j)\sp{c_j}h(-j)\sp{b_j}
e(-j)\sp{a_j}\dots\,f(-1)\sp{c_1}h(-1)\sp{b_1}
e(-1)\sp{a_1}f(0)\sp{c_0}v_\Lambda
$$
satisfying difference conditions
$$
\begin{aligned}
c_{j+1}+b_{j+1}+c_{j}&\leq k,\\
b_{j+1}+a_{j+1}+c_{j}&\leq k,\\
a_{j+1}+c_{j}+b_{j}&\leq k,\\
a_{j+1}+b_{j}+a_{j}&\leq k
\end{aligned}
$$
for all $j\geq 0$, and initial conditions $a_1\leq k_0$ and $c_0\leq
k_1$, is a basis of standard $\hat{\mathfrak g}$-module
$L(\Lambda)$.}

These difference and initial conditions for $A_1\sp{(1)}$-module
$L(k\Lambda_0)$ coincide with difference conditions
\eqref{E:difference conditions} and initial conditions
\eqref{E:initial conditions} for $B_2\sp{(1)}$ subspace
$W(k\Lambda_0)$. Moreover, the result of E.~Feigin [F, Theorem 3.1]
implies that $W(k\Lambda_0)$ for $B_2\sp{(1)}$ and $L(k\Lambda_0)$
for $A_1\sp{(1)}$ have the same presentation:

{\it Let $k$ be a positive integer. Let
$$\mathcal P=\mathbb C[f(j),h(j),e(j)\mid
j\leq-1]$$ and let $\mathcal I_{k\Lambda_0}$ be the ideal in the
polynomial algebra $\mathcal P$ generated by polynomials
$$
\bigcup_{\substack{n\leq-k-1}}U(\mathfrak
g)\cdot\Big(\sum_{\substack{j_1,\dots,j_{k+1}\leq-1\\j_1+\dots+j_{k+1}=n}}e(j_1)\dots
e(j_{k+1})\Big)
$$
(here $\cdot$ denotes the adjoint action of $\mathfrak g$ on
$\mathcal P$). Then as $\mathbb Z$-graded vector spaces and
$\mathfrak g$-modules
$$
L(k\Lambda_0)\cong \mathcal P/\mathcal I_{k\Lambda_0}.
$$
}

Due to this coincidence E.~Feigin's fermionic formula [F, Theorem
3.2] for $A_1\sp{(1)}$-module $L(k\Lambda_0)$ is also a character
formula of Feigin-Stoyanovsky type subspace $W(k\Lambda_0)$ for
$B_2\sp{(1)}$.

\end{document}